\def\vr{{\varphi}}
\newcommand\dint{\displaystyle\int}
\def\fa{{\mathcal{F}}}
\def\Diff{\mbox{Diff}}
\newtheorem{theorem}{Theorem}
\newtheorem{Theorem}{Theorem}[section]
\newtheorem{Corollary}{Corollary}[section]
\newtheorem{Lemma}{Lemma}[section]
\newtheorem{Question}{Question}[section]
\newtheorem{Claim}{Claim}[section]
\newtheorem{Definition}{Definition}[section]
\newtheorem{Remark}{Remark}[section]
\newtheorem{Example}{Example}[section]
\newtheorem{Conjecture}{Conjecture}[section]
\title[On singular Frobenius]{On singular Frobenius  for linear  differential equations of second and third order, part 1: ordinary differential equations}
\author{V. Le\'on}
\address{V. Le\'on. ILACVN - CICN, Universidade Federal of the Integração Latino-Americano, Parque tecnológico de Itaip\'u, Foz do  Iguaçu-PR, 85867-970 - Brazil}
\email{victor.leon@unila.edu.br}
\author{B. Sc\'ardua}
\address{B. Sc\'ardua. Instituto de Matem\'atica - Universidade Federal do the Rio de Janeiro,
CP. 68530-Rio de Janeiro-RJ, 21945-970 - Brazil}
\email{bruno.scardua@gmail.com}
\subjclass[2000]{Primary 34A05, 34A25; Secondary 34A30, 34A26.}
\begin{document}

\begin{abstract}
We study  second order and third order linear differential equations with analytic coefficients
under the viewpoint of finding formal solutions and studying their convergence. We address some untouched aspects of Frobenius methods for second order as the convergence of formal solutions
and the existence of Liouvillian solutions. A characterization of
regular singularities is given in terms of the space of solutions. An  analytic classification of such linear homogeneous ODEs is obtained. This is done by associating to such an ODE  a Riccati differential equation and therefore a {\it global holonomy group}. This group is a computable group of Moebius maps.
These techniques apply  to classical equations as Bessel and Legendre equations.
In the second part of this work we study third order  equations. We prove a theorem similar to classical Frobenius  theorem, which describes all the possible cases and solutions to this type of ODE. Once armed with
this we pass to investigate the existence of solutions in the non-homogeneous case and
also the existence of a convergence theorem in the same line as done for second order above.
Our results are concrete and (computationally) constructive and are aimed to shed a new light in this important,
useful and attractive field of science.
\end{abstract}

\maketitle

\tableofcontents

\section{Introduction}

\par Differential equations are among the most powerful tools in mathematics and physics
(\cite{A,F,L,Fowles2,Dushman}). Roughly speaking, these are equations involving one or more functions and their derivatives.
Their study has many aspects, from quantitative theory, ie. the search of solutions, to qualitative theory.
There are two main groups of differential equations: {\it ordinary} differential equations (ODEs for short) and {\it partial} differential equations (PDEs for short). The first consists of  equations depending on a single variable (time for instance). By its turn PDEs involves partial derivatives, depending on several variables.

\par Since the first appearance of Newton's laws of motion (\cite{newton}), the study of ordinary differential equations has been associate
with fundamental problems in physics and science in general. This has been reinforced by the work of many
scientists (mathematicians, physicists, meteorologists, etc) through their contributions in problems as:
universal gravitation and planetary dynamics, dynamics of particles under the action of a force field
as the electromagnetic field, thermodynamics, meteorology and  weather forecast, study of climate
phenomena as typhoons and hurricanes, aerodynamics and hydrodynamics, atomic models, etc.
The list is as long and the possibilities of human scientific development.

\par Thanks to the nature of Newton's laws and other laws as Maxwell's equations or Faraday's and Kepler's laws (\cite{Cordani,Kepler}), most of the pioneering  work is in  ODEs. Furthermore, these classical equations are   of first or second order (the higher order of the derivatives is not beyond two).
Of special interest are the laws of the oscillatory movement (pendulum equation and Hill lunar movement equation\cite{Hill}) and Hooke's law (spring extension or compression).
Let us not forget that classical fundamental solutions to problems as heat conduction (heat equation),
vibration (wave equation) and others (Laplace equation). Though these problems  are modeled  by partial differential equations, they may be solved with the aid of ordinary differential equations. This is for instance the idea of the method of separation of variables and eventual use of Fourier series.

\par All these classical equations above are, or have nice approximations by, linear equations.
Among the linear equations the homogeneous case is a first step and quite meaningful.
Thus, to be able to solve classical ordinary linear homogeneous differential equations is an important subject of active study in mathematics. The arrival of features like scientific computing gives new
breath to the problem of solution of a given ODE by looking for solutions via power series.
This of course in the real analytic framework, which is quite common in the Nature (\cite{Fowles}).
In this sake, a classical and powerful method is due to Frobenius.
The {\it method of Frobenius} can be summarized as follows:
Given a linear homogeneous second order differential equation
$a(x) y^{\prime \prime} + b(x)y^\prime  + c(x)y=0$ for some real analytic functions $a(x), b(x), c(x)$ at some point $x_0\in \mathbb R$, we look for solutions which are of the form
$y(x)= \sum\limits_{n=0}^\infty d_n (x-x_0)^{n+r}$ where $d_n$ and $r$ are constants.
We shall not detail this method now, but we must say that this is based on {\it Euler's equation}
 $a x^2 y^{\prime \prime} + b x y ^\prime + cy=0$ and the idea of looking for  solutions of the form $y=x^r$.
 Then $r$ must be a  root of the so called {\it indicial equation}. The main point is that Frobenius method works pretty well
in a suitable class of second order ODEs, so called {\it regular singular} ODEs around $x_0$.

 \vglue.2in

Third-order differential
  equations models are used in modeling an important number of high energy physical problems.
  One of the  first that comes to mind is
the problem of   oscillations in a  nuclear reactor (\cite{Troy}).
The  deflection of a curved beam
  having a constant or a varying cross-section is another example. Other examples are  three layer beams, electromagnetic waves
  or gravity-driven flows. We also have
  Barenblatt's equation for diffusion in a porous fissured medium.
In neurobiology, for example modeling current flow in neurons with microstructure,
$V_t = V_{xx} +gV_{txx} -V$ where $g$ is a constant (see \cite{Omori}). More generally, third order ODEs appear in astrodynamics:
the Clohessy-Wiltsjire equations (relative motion about a circular orbit),
the Tschauner-Hempel equations (relative motion about an ellipse),
the two-body equation after the substitution y=1/r. There are other situations. For instance, third order differential equation is the one
for the temperature appearing in the heat transport theory of materials
contradicting the ``fading memory paradigm".
Finally, more Physical examples are: the
Abraham-Lorentz force (electron self-force) (\cite{Dushman,FR}) and the Jerk for parabolic curves in the roads.

\par In this paper we study both second order and third order differential equations with analytic coefficients
under the viewpoint of finding solutions and studying their convergence. In very few words,
we study forgotten aspects of Frobenius methods for second order as convergence of formal solutions
and the existence of Liouvillian solutions. We also discuss the characterization of the so called {\it
regular singularities} in terms of the space of solutions. An  analytic classification is obtained via associating to such an ODE  a Riccati differential equation and therefore a {\it global holonomy group}. This group is a computable group of Moebius maps.
Next we  apply these techniques and results to classical equations as Bessel and Legendre equations.

\par In the second part of this work we study third order linear differential equations. After presenting a model for the Euler equation and its corresponding indicial equation in this case, we  introduce the notion
of regular singular point for this class of equations. Then we prove a theorem similar to classical Frobenius
theorem, which describes all the possible cases and solutions to this type of ODE. Once armed with
this we pass to investigate the existence of solutions in the non-homogeneous case and
also the existence of a convergence theorem in the same line as done for second order above.

Our results are concrete and (computationally) constructive and are aimed to shed a new light in this important,
useful and attractive field of science.
\vglue.1in
Next we give a more detailed description of our results.

\subsection{The classical method of Frobenius  for second order}

The classical method of Frobenius is a very useful tool in finding solutions of
a homogeneous second order linear ordinary differential equations with analytic coefficients.
These are equations that write in the form
$a(x) y^{\prime \prime} + b(x)y^\prime  + c(x)y=0$ for some real analytic functions $a(x), b(x), c(x)$ at some point
$x_0\in \mathbb R$. It well known that if $x_0$ is an ordinary point, i.e., $a(x_0)\ne 0$ then there
are two linearly independent
solutions $y_1(x), y_2(x)$ of the ODE, admitting  power series expansions converging in some common neighborhood of
$x_0$. This is a consequence of the classical theory of ODE and also shows that the solution space of this ODE
has dimension two, i.e., any solution is of the form $c_1 y_1(x) + c_2 y_2(x)$ for some constants $c_1, c_2 \in \mathbb R$.
Second order linear homogeneous differential equations appear in many concrete
problems in natural sciences, as physics,
chemistry, meteorology and even biology. Thus solving such equations is an important task.
The existence of solutions for the case of an ordinary point is not enough for most of the applications.
Indeed, most of the relevant equations are connected to the singular (non-ordinary) case. We can mention
Bessel equation $x^2 y^{\prime \prime}  + x y ^\prime + (x^2 - \nu^2) y=0$, whose range of applications goes from
heat conduction, to the model of the hydrogen atom (\cite{BS,Gr}). This equation has  the origin $x=0$ as a singular point.
Another remarkable equation is the {\it Laguerre equation} $x y ^{\prime \prime} + (\nu +1 -x) y^\prime + \lambda y=0$ where
$\lambda, \nu \in \mathbb R$ are parameters. This equation is quite relevant  in quantum mechanics, since
it appears in the modern quantum mechanical description of the hydrogen atom.
All these are examples of equations with a {\it regular singular point}. According to Frobenius
a singular point $x=x_0$ of the ODE $a(x) y^{\prime \prime} + b(x)y^\prime  + c(x)y=0$
is {\it regular} if $\displaystyle\lim_{x \to x_0} (x-x_0)\frac{b(x)}{a(x)}$ and $\displaystyle\lim_{x\to x_0} (x-x_0)^2 \frac{c(x)}{a(x)}$ are finite.
We shall refer to this as follows: the ODE  $y^{\prime \prime} + \alpha(x) y ^\prime + \beta(x)=0$
has a regular singular point
at $x=x_0$ if $\displaystyle\lim_{x\to x_0} (x-x_0) \alpha(x)$ and $\displaystyle\lim_{x \to x_0}(x-x_0)^2\beta(x)$
admit  extensions which are analytic at  $x=x_0$.  In this case we have the following classical theorem of Frobenius:

\begin{Theorem}[Frobenius theorem, \cite{Boyce}, \cite{Frobenius}, \cite{C}]
Assume that the ODE $(x-x_0)^2y^{\prime \prime}+(x-x_0)b(x)y^\prime+c(x)y=0$
has a regular  singularity at $x=x_0$, where the functions $b(x), c(x)$ are
analytic with convergent power series in $|x-x_0|<R$. Then there is at least one solution
 of the form
$y(x)=|x-x_0|^r \sum\limits_{n=0}^\infty d_n (x-x_0)^n$ where  $r$ is a root of the indicial equation, $d_0=1$
where the series converges for $|x-x_0|<R$.
\end{Theorem}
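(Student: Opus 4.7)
The plan is to reduce to $x_0=0$ by translation and then use the classical ansatz $y(x)=x^{r}\sum_{n\geq 0}d_n x^n$ with $d_0=1$, substitute into the ODE, and extract a recurrence for the $d_n$. Writing $b(x)=\sum_{n\geq 0}b_n x^n$ and $c(x)=\sum_{n\geq 0}c_n x^n$, plugging in $y=\sum_{n\geq 0} d_n x^{n+r}$ and collecting powers of $x^{n+r}$ gives, for each $n\geq 0$, a relation of the shape
\begin{equation*}
P(n+r)\,d_n \;=\; -\sum_{k=0}^{n-1}\bigl((k+r)\,b_{n-k}+c_{n-k}\bigr)\,d_k,
\end{equation*}
where $P(s)=s(s-1)+b_0 s+c_0$ is the indicial polynomial. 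For $n=0$ this forces $P(r)=0$, so $r$ must solve the indicial equation; conversely, once $r$ is chosen to be a root (and, if the two roots differ by a nonnegative integer, chosen to be the one with larger real part), one has $P(n+r)\neq 0$ for every $n\geq 1$, and the recurrence uniquely determines all $d_n$ from $d_0=1$.

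The substantive step will be to show that the formally constructed series $\sum d_n x^n$ actually converges on $|x|<R$. The strategy I would follow is the classical majorant argument. Fix $\rho$ with $0<\rho<R$. Since $b$ and $c$ are analytic on $|x|<R$, there is a constant $M>0$ such that $|b_n|,|c_n|\leq M\rho^{-n}$ for every $n$. From the recurrence, since $P(s)$ is a quadratic in $s$, there is $n_0$ and a constant $\kappa>0$ with $|P(n+r)|\geq \kappa\,n^2$ for all $n\geq n_0$. I would then prove by induction on $n$ that $|d_n|\leq A\,\rho^{-n}$ for a suitably chosen constant $A$, using the bound
\begin{equation*}
|d_n|\;\leq\;\frac{1}{|P(n+r)|}\sum_{k=0}^{n-1}\bigl(|k+r|\,|b_{n-k}|+|c_{n-k}|\bigr)|d_k|
\end{equation*}
together with the estimates above; the factor $1/n^2$ from $|P(n+r)|^{-1}$ will dominate the $O(n)$ growth coming from $|k+r|$, giving the induction step for $n$ large. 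Adjusting $A$ to absorb the finitely many initial indices $n<n_0$ closes the induction. Since $\rho<R$ is arbitrary, the series converges on all of $|x|<R$.

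The main obstacle is genuinely the convergence part, not the formal manipulation: one must choose the constant $A$ and exploit the quadratic growth of $|P(n+r)|$ carefully enough that the convolution on the right-hand side of the recurrence yields a bound of the same shape $A\rho^{-n}$. A minor side issue is the choice of root when the two roots of $P$ differ by a positive integer, which is why I would prescribe taking the root with larger real part: this guarantees $P(n+r)\neq 0$ for $n\geq 1$ and produces one legitimate Frobenius solution, exactly as the statement claims. Finally, the appearance of $|x-x_0|^{r}$ (rather than $(x-x_0)^{r}$) is simply the real-variable choice of branch for a possibly non-integer $r$; on each side $x>x_0$ or $x<x_0$ the same power series $\sum d_n(x-x_0)^n$ multiplied by the chosen real branch $|x-x_0|^{r}$ satisfies the ODE, so the convergence result transfers directly from the complex computation.
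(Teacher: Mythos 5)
Your overall strategy (Frobenius ansatz, recurrence $P(n+r)\,d_n=-\sum_{k=0}^{n-1}\bigl((k+r)b_{n-k}+c_{n-k}\bigr)d_k$ with $P(s)=s(s-1)+b_0s+c_0$, choice of the root of larger real part so that $P(n+r)\neq0$ for all $n\geq1$, then a majorant argument) is the classical one; the paper states this second-order theorem only with references, and the relevant comparison is its proof of the third-order analogue, Theorem~\ref{thmfrobb3I}, which follows the same scheme. Your formal part and your handling of the root selection and of the $|x-x_0|^r$ branch are fine.

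The gap is in the convergence induction. With $|b_{n-k}|,|c_{n-k}|\leq M\rho^{-(n-k)}$ and the inductive hypothesis $|d_k|\leq A\rho^{-k}$, each term of the convolution is bounded by $(k+|r|+1)MA\rho^{-n}$, so the sum over $k=0,\dots,n-1$ is of order $\tfrac12 MAn^2\rho^{-n}$: there are $n$ terms each of size $O(n)$, hence the total is $O(n^2)$, not the $O(n)$ you assert. After dividing by $|P(n+r)|\geq\kappa n^2$ you are left with the constant $M/(2\kappa)$, which has no reason to be $\leq1$ (the Cauchy constant $M$ depends on $b,c$ and $\rho$, while $\kappa$ depends only on the indicial roots), so the bound $|d_n|\leq A\rho^{-n}$ does not reproduce itself and the induction fails to close at the same radius $\rho$ used for the Cauchy estimates. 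Two standard repairs: (i) prove instead $|d_n|\leq A\sigma^{-n}$ for an arbitrary $\sigma<\rho$; the geometric factor $(\sigma/\rho)^{n-k}$ then cuts the convolution down to $O(n)$, the quotient by $\kappa n^2$ tends to $0$, the induction closes for $n$ large after enlarging $A$ finitely many times, and letting $\sigma\uparrow R$ finishes; or (ii) follow Coddington and the paper's proof of Theorem~\ref{thmfrobb3I}: define a majorant sequence $\gamma_n$ by the exact equality version of your estimate, note that replacing $n$ by $n+1$ in that equality yields a two-term recurrence expressing $\gamma_{n+1}$ in terms of $\gamma_n$ alone, and apply the ratio test, which gives $\gamma_{n+1}|x|^{n+1}/(\gamma_n|x|^n)\to|x|/\rho$ and hence convergence for $|x|<\rho$. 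Either repair is routine, but the estimate as you propose it would not go through.
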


The method of Frobenius  (for this case of second order ODE) consists in associating to the original
ODE an {\it Euler equation}, i.e., an equation of the form $ A(x-x_0)^2y^{\prime \prime} + B (x-x_0)y ^\prime +
Cy=0$ and looking for solutions (to this equation) of the form $y_0(x)=(x-x_0)^r$. This gives an algebraic equation
of degree two $A r (r-1) + B r + C=0$, so called {\it indicial equation}, whose zeroes $r$ give solutions $y_0(x)=(x-x_0)^r$ of the Euler equation.
The Euler equation associate to the original ODE with a regular singular point at $x=x_0$ is given by
$(x-x_0)^2y^{\prime \prime} + p_0 (x-x_0)y ^\prime +
q_0y=0$ where $p_0=\displaystyle\lim_{x \to x_0} (x-x_0)\frac{b(x)}{a(x)}$ and $q_0=\displaystyle\lim_{x\to x_0} (x-x_0)^2 \frac{c(x)}{a(x)}$.
Then Frobenius method consists in looking for solutions of the original ODE as of the form $y_1(x)=|x-x_0|^r \sum\limits_{n=0}^\infty d_n (x-x_0)^n$ where $r$ is the greater real part zero of the indicial equation given by the Euler equation as above. The equation whether there is a second
linearly independent solution is related to the roots of the indicial equation. Indeed, there is some zoology
and in general the second solution is of the form
 $y_2(x)=|x-x_0|^{\tilde r} \sum\limits_{n=0}^\infty\tilde{d}_n(x-x_0)^n$ in case there is a second root $\tilde r$ of the
 indicial equation and this root is such that $r-\tilde r\not\in \mathbb Z$. If $\tilde r =r$ then there is a solution of the form
 $y_2(x)=y_1(x)\log |x-x_0| + |x-x_0|^{r+1}\sum\limits_{n=0}^\infty \hat{d}_n (x-x_0)^n$. Finally,
  if $0 \ne   r - \tilde r\in \mathbb N$ then we have a second solution of the form
  $y_2(x)= ky_1(x)\log |x-x_0| + |x-x_0|^{\tilde r}\sum\limits_{n=0}^\infty \check{d}_n (x-x_0)^n$.
  This brief description of the method of Frobenius already suggests that there may exist a higher order
  version of this result. For instance for third order linear homogeneous ODEs.

 \subsection{Second order  equations}
Section~2 is dedicated to the study of second order linear differential equations.
We start with the following.
\subsubsection{Convergence of formal solutions for second order linear homogenous ODEs}
We shall now discuss the problem of convergence of formal solutions for linear homogeneous
ODEs of order two. We recall that there are examples of ODEs admitting a formal solution that is nowhere convergent
( cf. Example~\ref{2exeminf}).

Our next result may be seen as a version of a theorem due to Malgrange and also to
Mattei-Moussu for holomorphic integrable systems of differential forms. By a {\it formal solution centered at $x_0\in \mathbb R$}
 of an  ODE we shall mean a formal power series $\hat y(x)=\sum\limits_{n=0}^\infty a_n (x-x_0)^n$
 with complex coefficients $a_n \in \mathbb C$.
We prove:
\begin{theorem}[formal solutions order two]
\label{Theorem:A}
Consider a second order ordinary differential equation given by
\begin{equation}\label{Edo1}
a(x)y^{\prime\prime}+b(x)y^\prime+c(x)y=0
\end{equation}
where $a,b,c$ are analytic functions at $x_0 \in \mathbb R$.
Suppose also that there exist two linearly independent formal
solutions  $\hat y_1(x)$ and $\hat y_2(x)$ centered at $x_0$    of  equation (\ref{Edo1}).
Then $x_0$ is an ordinary point or a regular singular point of
 (\ref{Edo1}).
Moreover,
$\hat y_1(x)$ and $\hat y_2(x)$ are convergent.
\end{theorem}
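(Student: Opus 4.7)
The plan is to translate to $x_0 = 0$ and to use the formal Wronskian of the two solutions as a bridge between the formal hypotheses and the meromorphic functions $b/a$, $c/a$. If $a(0) \neq 0$ then $x_0$ is ordinary and the classical Cauchy existence theorem gives convergence immediately, so I will assume $a(0) = 0$. Set $\hat W := \hat y_1 \hat y_2' - \hat y_1' \hat y_2 \in \mathbb{C}[[x]]$. Linear independence of $\hat y_1, \hat y_2$ over $\mathbb{C}$ forces $\hat W \neq 0$: otherwise $(\hat y_2/\hat y_1)' = 0$ in $\mathbb{C}((x))$, making $\hat y_2$ a scalar multiple of $\hat y_1$.

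The first step is to show that $x_0$ is at worst a regular singular point by bounding the pole orders at $0$ of $\alpha := b/a$ and $\beta := c/a$. Dividing the ODE by $a$ yields the formal identities $\hat y_i'' + \alpha\,\hat y_i' + \beta\,\hat y_i = 0$ in $\mathbb{C}((x))$ for $i = 1, 2$. The usual Abel manipulation---multiply the first identity by $\hat y_2$, the second by $\hat y_1$, and subtract---gives $\hat W' + \alpha\,\hat W = 0$, whence $\alpha = -\hat W'/\hat W$. Writing $\hat W = x^N u(x)$ with $N \geq 0$ and $u(0) \neq 0$, one reads $\hat W'/\hat W = N/x + u'/u$, so $\mathrm{ord}_0 \alpha \geq -1$. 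Solving the ODE for $\beta$ yields $\beta = -(\hat y_1'' + \alpha\,\hat y_1')/\hat y_1$; combining $\mathrm{ord}_0 \alpha \geq -1$ with the trivial bounds $\mathrm{ord}_0 \hat y_1^{(k)} \geq m_1 - k$, where $m_1 := \mathrm{ord}_0 \hat y_1 \geq 0$, gives $\mathrm{ord}_0 \beta \geq -2$. Since $\alpha, \beta$ are the Laurent expansions at $0$ of the genuine meromorphic functions $b/a$ and $c/a$, these are exactly the Frobenius regular singular conditions: $(x-x_0)\,b/a$ and $(x-x_0)^2\,c/a$ extend analytically to $x_0$.

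Having secured the regularity of the singularity, for convergence I would apply the classical Frobenius Theorem recalled in the excerpt: it produces a fundamental system of convergent Frobenius-type solutions $y_1, y_2$. The formal Frobenius solution space of the order-two ODE is two-dimensional and spanned by $y_1, y_2$, so every formal power series solution is a $\mathbb{C}$-linear combination of $y_1, y_2$. A short case analysis on the indicial roots $r_1, r_2$ shows that a two-dimensional subspace of pure formal power series solutions can exist only when both $y_1$ and $y_2$ are themselves pure convergent power series---namely, $r_1, r_2 \in \mathbb{Z}_{\geq 0}$ with the logarithmic coefficient $k$ in the second solution vanishing; any surviving $x^r$ with $r \notin \mathbb{Z}_{\geq 0}$, or any $\log|x-x_0|$ term, would collapse the pure-power-series subspace to dimension at most one. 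Therefore $\hat y_1, \hat y_2$ lie in the $\mathbb{C}$-span of two convergent power series and are themselves convergent.

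The anticipated main obstacle is the case analysis in Step 2: carefully verifying that two linearly independent formal power series solutions cannot coexist with a logarithm or a non-integer Frobenius exponent in the convergent fundamental system. Step 1, though computational, is essentially bookkeeping with formal orders via the identity $\alpha = -\hat W'/\hat W$, and this is the conceptual backbone converting the formal hypothesis into the regularity condition.
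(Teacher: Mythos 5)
Your argument is correct in outline, but it follows a genuinely different route from the paper's primary proof, and it is worth recording the comparison. For the convergence half, the paper's main proof is not Frobenius-theoretic at all: it passes to the integrable $1$-form $\Omega=-a\,y\,dx+a\,x\,dy+[ay^2+bxy+cx^2]\,dz$ attached to the reduced first-order system, builds from $\hat y_1,\hat y_2$ a purely formal meromorphic first integral $H=\frac{x\hat y_1'-y\hat y_1}{x\hat y_2'-y\hat y_2}$, and invokes the Cerveau--Mattei convergence theorem to conclude $H$ (hence $\hat y_1,\hat y_2$) converges --- notably \emph{without} first knowing the singularity is regular. Your Step 2 instead reproves convergence elementarily via the Frobenius fundamental system and a case analysis on the indicial roots; this is essentially the paper's own ``alternative proof'' (the Wronskian~II discussion), so it buys self-containedness at the price of needing regularity established first. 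For the regularity half, your treatment is cleaner than the paper's: the identity $\alpha=-\hat W'/\hat W$ with $\hat W=x^Nu$, $u(0)\neq 0$, gives $\operatorname{ord}_0(b/a)\geq -1$ in one line, and solving the equation for $\beta$ gives $\operatorname{ord}_0(c/a)\geq -2$ by pure order bookkeeping, whereas the paper gets the second bound by a separate recursion argument showing a pole of order $\geq 3$ in $c/a$ would force the only formal solution to be trivial. The one point you should not leave implicit in Step 2 is the assertion that every formal power series solution lies in $\mathbb{C}\text{-span}\{y_1,y_2\}$: the $y_j$ live in $x^{r}\mathbb{C}\{x\}\oplus \log x\cdot(\cdots)$ while $\hat y_j$ are formal power series, so you need the standard fact that in the differential ring $\mathbb{C}((x))[x^{r_1},x^{r_2},\log x]$ the constants are $\mathbb{C}$ and the solution space of a second order equation has dimension at most two (Wronskian argument over $\mathbb{C}((x))$); with that in place your case analysis on $r_1,r_2$ and the logarithmic coefficient goes through exactly as you describe. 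The paper glosses over the same point, so this is a refinement rather than a defect specific to your write-up.
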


\begin{theorem}[1 formal solution second order regular singularity]
\label{Theorem:B}
Consider a second order ordinary differential equation given by
\begin{equation}\label{Edo2}
a(x)y^{\prime\prime}+b(x)y^\prime+c(x)y=0
\end{equation}
where $a,b,c$ are analytic functions at $x_0 \in \mathbb R$.
Suppose (\ref{Edo2}) has at $x_0$  an ordinary point or a regular singular point.
Given a formal solution $\hat y(x)$ of  (\ref{Edo2}) then
this solution is convergent.
\end{theorem}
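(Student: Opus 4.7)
After translating so that $x_0 = 0$, I would treat the two possibilities separately. If $0$ is an ordinary point then, after dividing by $a(x)$, the ODE reads $y'' + \alpha(x)y' + \beta(x)y = 0$ with $\alpha,\beta$ analytic at $0$. Substituting $\hat y = \sum_{n\geq 0} a_n x^n$ and collecting the coefficient of $x^n$ yields a recursion that expresses $(n+2)(n+1)\,a_{n+2}$ as a linear combination of $a_0,\dots,a_{n+1}$; hence every $a_n$ is determined by $a_0$ and $a_1$. On the other hand, Cauchy's existence theorem for analytic linear ODEs produces a genuine analytic solution $y(x)$ near $0$ with $y(0)=a_0$ and $y'(0)=a_1$, whose Taylor coefficients satisfy exactly the same recursion. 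Thus the Taylor series of $y$ agrees with $\hat y$ term by term, and $\hat y$ converges.

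For the regular singular case, I would normalize the ODE to $x^2 y'' + xP(x)y' + Q(x)y = 0$, where $P(x) = \sum_{k \geq 0} p_k x^k$ and $Q(x) = \sum_{k \geq 0} q_k x^k$ are convergent in some disk $|x| < R_0$ and satisfy $|p_k|,|q_k| \leq M\, R_0^{-k}$. Plugging $\hat y = \sum_{n\geq 0} a_n x^n$ into the equation and extracting the coefficient of $x^n$ gives the recursion
\[
I(n)\, a_n = -\sum_{m=0}^{n-1} \bigl(m\, p_{n-m} + q_{n-m}\bigr)\, a_m,
\]
where $I(r) = r(r-1) + p_0\, r + q_0$ is the indicial polynomial. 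Since $I$ has only two complex zeros and $I(n) = n^2\bigl(1 + O(1/n)\bigr)$, there is an integer $n_0$ with $|I(n)| \geq n^2/2$ for all $n \geq n_0$; for the finitely many $n < n_0$ at which $I(n) = 0$ the displayed identity is merely a compatibility condition that holds automatically because $\hat y$ is, by assumption, a formal solution.

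The convergence then follows from a standard majorant estimate. Fix any $r \in (0, R_0)$, set $\theta = r/R_0 < 1$, and aim at the inductive bound $|a_n| \leq K\, r^{-n}$. For the initial segment $n < n_0$ choose $K$ sufficiently large by hand. For $n \geq n_0$, the recursion combined with the inductive hypothesis and the identity $R_0^{-(n-m)} r^{-m} = \theta^{n-m} r^{-n}$ gives
\begin{align*}
|a_n| &\leq \frac{2M}{n^2} \sum_{m=0}^{n-1} (m+1)\, R_0^{-(n-m)}\, K\, r^{-m} \\
&= \frac{2MK}{n^2}\, r^{-n} \sum_{k=1}^{n} (n-k+1)\, \theta^{k} \\
&\leq \frac{2M\,\theta}{(1-\theta)\,n}\, K\, r^{-n},
\end{align*}
which is at most $K\, r^{-n}$ as soon as $n \geq 2M\theta/(1-\theta)$. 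Enlarging $n_0$ (and then $K$) closes the induction, so $\hat y$ has radius of convergence at least $r$; since $r < R_0$ is arbitrary, $\hat y$ converges throughout the common disk of analyticity of $P$ and $Q$. The main difficulty lies precisely in this last estimate: one must exploit the quadratic growth of $|I(n)|$ --- the hallmark of a regular singular point --- to dominate both the convolution sum and the extra factor $m$ coming from differentiation, and this mechanism is exactly what would break down at an irregular singular point.
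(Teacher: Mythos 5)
Your proof is correct, but it follows a genuinely different route from the paper's. The paper first shows that if $\hat y(x)=x^{r_1}(1+\varphi(x))$ with $\varphi(0)=0$, then the order of vanishing $r_1$ must be a root of the indicial equation; it then invokes the classical Frobenius existence theorem to produce a convergent solution attached to one of the indicial roots and runs a case analysis: either $\hat y$ is a scalar multiple of that convergent solution (hence convergent), or the two are linearly independent, in which case the paper falls back on its earlier theorem that two linearly independent formal solutions of a second order analytic ODE are automatically convergent (itself proved via the Cerveau--Mattei theorem on formal meromorphic first integrals, or via a wronskian case analysis). You instead work directly with the recursion $I(n)a_n=-\sum_{m<n}(mp_{n-m}+q_{n-m})a_m$ satisfied by the coefficients of the given formal series and close a majorant induction using $|I(n)|\gtrsim n^2$; the finitely many indices where $I(n)=0$ are absorbed into the choice of the constant $K$, with the corresponding identities holding as compatibility conditions because $\hat y$ is a solution. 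Your argument is self-contained and more elementary --- it needs neither the Frobenius theorem as a black box nor the two-formal-solutions theorem --- and it is quantitative, in the spirit of the coefficient estimates the paper itself carries out for the third order case; what it does not deliver is the structural byproduct of the paper's proof, namely that the order of $\hat y$ at the origin is an indicial root and the resulting relation between $\hat y$ and the Frobenius basis. One cosmetic point: the Cauchy bound $|p_k|,|q_k|\le M R_0^{-k}$ should be taken on a circle of radius $R_0$ strictly inside the disc of convergence (or with an intermediate radius $r<\rho<R_0$), but since you let $r$ range over all values below the radius of analyticity this does not affect the conclusion.
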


\subsubsection{Characterization of regular singular points in order two}

We shall say that a function
$u(x)$ for $x$ in a disc $|x|<R$ centered at the origin $0\in \mathbb R, \mathbb C$
is  {\it an analytic combination of log and power} ({\it anclop} for short) if it can be written as $u(x)=
\alpha(x) + \log(x) \beta(x) + \gamma(x) x^r$ for some analytic functions $\alpha(x), \beta(x), \gamma(x)$ defined in the disc $|x|<R$ and $r \in \mathbb R$ or $r \in \mathbb C$. In the real case we assume that
$x>0$ in case we have $\beta \not\equiv 0$ or $\gamma\not\equiv 0$ and a power $x^r$ with $r\in \mathbb R \setminus \mathbb Q$.

\begin{Definition}
{\rm A one-variable complex function $u(z)$ considered in a domain $U\subset \mathbb C$ will be called {\it
analytic up to log type singularities } ({\it autlos} for short) if:
 \begin{enumerate}
 \item $u(z)$ is holomorphic in $U\setminus \sigma$ where $\sigma\subset U$ is a discrete set of points, called {\it singularities}.
 \item Given a singularity $p \in \sigma$ either $p$ is a removable singularity of $u(z)$ or
 there is a germ of real analytic curve $\gamma\colon[0,\epsilon) \to U$ such that $\gamma(0)=p$ and
 $u(z)$ is holomorphic in $D\setminus \gamma[0,\epsilon)$ for some disc $D\subset U$ centered at $p$.
 \end{enumerate}

 A one variable real function $u(x)$ defined in an interval $J \subset \mathbb R$ will be called
{\it analytic up to log type singularities } ({\it autlos} for short) if, after complexification,
the corresponding function $u_\mathbb C(z)$, which is defined in some neighborhood $J \times \{0\}\subset U\subset \mathbb C$ is analytic up to log type singularities, as defined above.}
 \end{Definition}

\begin{theorem}[characterization of regular points]
\label{Theorem:characterizationregularpointordertwo}
Consider a linear homogeneous ordinary differential equation of second order given by
$$a(x)y^{\prime\prime}+b(x)y^\prime+c(x)y=0$$
where $a,b,c$ are analytic functions at $x_0 \in \mathbb R$.
Then the following conditions are equivalent:
\begin{enumerate}
\item The equation admits two
linearly independent  solutions $y_1(x), y_2(x)$ which are anclop (analytic combinations of log and power). Then $x_0$ is an ordinary point or a regular singular point
for the ODE.
\item The equation admits two solutions $y_1(x), y_2(x)$ which are autlos (analytic up to logarithmic singularities).
\item The equation has an ordinary point or a regular singular point at $x_0$.
\end{enumerate}
\end{theorem}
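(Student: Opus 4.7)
The plan is to establish the cycle $(3)\Rightarrow(1)\Rightarrow(2)\Rightarrow(3)$. The implication $(3)\Rightarrow(1)$ is essentially a restatement of the classical Frobenius theorem cited above, together with its standard supplement describing the second linearly independent solution: at an ordinary point we get two analytic (hence anclop) solutions, and at a regular singular point the three sub-cases (non-integer, equal, or integer-difference roots of the indicial equation) each yield a fundamental system whose members are of anclop form $\alpha(x)+\log(x-x_0)\beta(x)+\gamma(x)(x-x_0)^r$. The implication $(1)\Rightarrow(2)$ is immediate from the definitions, since any anclop function admits a single-valued holomorphic extension to a slit complex disc about $x_0$ once a branch of $\log$ and of the power is chosen along a real-analytic ray emanating from $x_0$ -- and this is exactly the autlos condition.

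The substantive content is $(2)\Rightarrow(3)$. After complexifying, the two linearly independent autlos solutions $y_1,y_2$ extend as single-valued holomorphic functions on a slit disc $D\setminus\gamma$ around $x_0$. The monodromy of the ODE around $x_0$ acts on the two-dimensional solution space via some $M\in\mathrm{GL}(2,\mathbb{C})$; putting $M$ in Jordan form produces, in the semisimple case, a basis of the form $\tilde y_j(x)=(x-x_0)^{r_j}f_j(x)$ with $f_j$ single-valued on $D\setminus\{x_0\}$, and in the Jordan-block case the usual logarithmic companion $\tilde y_2(x)=(x-x_0)^{r}f_2(x)+c\log(x-x_0)\,\tilde y_1(x)$. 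The autlos hypothesis rules out essential-type behavior of the $f_j$ at $x_0$, so each $f_j$ is meromorphic there; absorbing pole orders into the exponents $r_j$ presents $y_1,y_2$ in anclop form. A direct computation then recovers the coefficients via Abel's identity $\alpha=-W'/W$ and the recovery formula $\beta=-(y_1''+\alpha y_1')/y_1$, and one checks that $(x-x_0)\alpha(x)$ and $(x-x_0)^2\beta(x)$ extend analytically across $x_0$ -- which is precisely the regular-singular condition.

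The main obstacle is the passage from autlos behavior to the meromorphy of the $f_j$: one must argue that analyticity up to logarithmic singularities excludes essential-type behavior at $x_0$, a point somewhat buried in the definition of autlos. A cleaner alternative, leaning on the preceding results of the paper, is to extract from the Jordan decomposition two linearly independent formal power series solutions of a reduced linear ODE (obtained after formally dividing out the power and log factors in the Frobenius style); Theorem~A applied to this reduced ODE then delivers the regular-singular conclusion directly, bypassing the essential-singularity analysis.
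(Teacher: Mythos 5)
Your cycle of implications coincides with the paper's ($(1)\Rightarrow(2)$ by definition, $(3)\Rightarrow(1)$ by the classical Frobenius theorem), and in both treatments the only substantive direction is $(2)\Rightarrow(3)$; there your route is genuinely different from the paper's. The paper argues through the wronskian: $W(y_1,y_2)$ solves $a(z)w^\prime+b(z)w=0$, hence equals $K\exp\big(-\int b/a\big)$, and a pole of $b/a$ of order greater than one would force an essential singularity of $W$, which the paper declares incompatible with $W$ being autlos; the bound on the pole order of $c/a$ is then obtained by the power-series recursion used earlier in the proof of Theorem~\ref{Theorem:twoformalsolutionsconverge}, which shows that a pole of order at least three kills all nontrivial formal solutions. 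You instead invoke the monodromy representation and its Jordan form to write the solutions as $(x-x_0)^{r_j}f_j$ plus a logarithmic companion, reduce everything to the meromorphy of the $f_j$, and recover $(x-x_0)\,b/a$ and $(x-x_0)^2\,c/a$ by Abel's identity and direct elimination. This is the classical Fuchs-theoretic argument and, granting the meromorphy step, the final computation does close: the logarithmic terms cancel in $W$, so $W^\prime/W$ has at worst a simple pole once the $f_j$ are units times powers, and $c/a=-\big(y_1^{\prime\prime}+(b/a)y_1^\prime\big)/y_1$ then has at worst a double pole.

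The genuine issue is the step you yourself flag: that the autlos hypothesis rules out essential-type behaviour of the $f_j$. With the definition of autlos as literally given in the paper --- holomorphic on a slit disc, with no growth restriction --- this is false: $e^{-1/z}$ is holomorphic on the punctured disc and hence autlos, and the paper's own example $z^3u^{\prime\prime}-zu^\prime+u=0$, with solutions $z$ and $z e^{-1/z}$, would then contradict the implication $(2)\Rightarrow(3)$ itself. What is needed is a moderate-growth (Fuchsian) clause in the definition, under which your meromorphy claim becomes the standard lemma that a single-valued function of moderate growth at an isolated singularity is meromorphic. You are no worse off than the paper, whose proof leans on the same unstated assumption when it asserts that an autlos wronskian cannot have an essential singularity. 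However, your proposed ``cleaner alternative'' does not actually bypass the problem: to extract two linearly independent formal power series solutions of a reduced equation you must already know that the $f_j$ have at worst poles, which is precisely the point at issue. If you supply the moderate-growth hypothesis explicitly, your argument is complete and arguably more transparent than the paper's.
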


\subsubsection{Riccati model and holonomy of a second order equation}

We start with a polynomial second order linear equation of the form
$a(z)u^{\prime \prime}+b(z)u^\prime+c(z)u=0$ in the complex plane.
By introducing the change of coordinates $t = u ^\prime / u$ we obtain a
 first order  Riccati equation which writes as
$$\frac{dt}{dz}=-\frac{a(z)t^2+b(z)t+c(z)}{a(z)}.$$

\begin{Definition}
{\rm The Riccati differential equation above is called {\it Riccati model} of the ODE
\, \, $a(z)u^{\prime\prime}+b(z)u^\prime+c(z)u=0$. }
 \end{Definition}
By its turn, since the work of Paul Painlev\'e (\cite{Pain}), a polynomial Riccati equation is studied from the point of view of its transversality with respect to the vertical fibers $z=constant$, even at the points at the infinity. With the advent of the theory of foliations, due to Ehresmann, the notion of holonomy was introduced as well as the notion of global holonomy of a foliation transverse to the fibers of a fibration. This is the case of a polynomial Riccati foliation once placed in the ruled surface $\mathbb P^1(\mathbb C) \times \mathbb P^1(\mathbb C)$, where $\mathbb P^1(\mathbb C)=\mathbb C \cup \{\infty\}$ is the Riemann sphere.
This allows us to introduce the notion of {\it global holonomy} of a second order linear  equation as above.
This permits the study of the equation from this group theoretical point of view, since the global holonomy will be a group of Moebius maps of the form $t \mapsto \frac{ \alpha t + \beta}{\gamma _t + \delta}$.
We do calculate this group in some special cases and reach some interesting consequences for the
original ODE.

\begin{theorem}
\label{Theorem:Riccatitrivialholonomy}
Consider a second order polynomial ODE given by
$$u^{\prime\prime}+b(z)u^\prime+c(z)u=0$$
where  $b,c$ are  complex polynomials of a variable $z$. Then the equation above admits
a general solution of the form
 $$
u_{\ell,k}(z)=k \exp\big(\int_0^z \frac{\ell D(\xi) - B(\xi)}{A(\xi) - \ell C(\xi)}d\xi\big), \, k , \ell \in \mathbb C.
$$
\end{theorem}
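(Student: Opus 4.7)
My strategy is to pass through the Riccati model and exploit the global regularity coming from having polynomial coefficients. Since $b(z), c(z)$ are polynomials, the linear equation $u''+b(z)u'+c(z)u=0$ has no finite singular points, so by standard ODE theory applied at each point of $\mathbb C$ its solution space is a two-dimensional $\mathbb C$-vector space of \emph{entire} functions. Fix any basis $u_1(z), u_2(z)$ of this space; I will express the general solution in terms of them.

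Every nontrivial solution is a linear combination $u=c_1 u_1+c_2 u_2$. For $c_1\ne 0$ set $\ell=c_2/c_1$; then the logarithmic derivative is
$$t(z)=\frac{u'(z)}{u(z)}=\frac{u_1'(z)+\ell\, u_2'(z)}{u_1(z)+\ell\, u_2(z)},$$
a Moebius function of $\ell\in\mathbb C$. With the sign convention
$$A(z)=u_1(z),\quad B(z)=-u_1'(z),\quad C(z)=-u_2(z),\quad D(z)=u_2'(z),$$
this reads $t(z)=(\ell D(z)-B(z))/(A(z)-\ell C(z))$, which is exactly the integrand in the statement; the excluded case $c_1=0$ corresponds to $\ell=\infty$ and recovers $u_2$ up to scaling. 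A Moebius parametrization of the Riccati fibre by $\ell$ therefore produces the desired one-parameter pencil of solutions of the original ODE, and integrating the separable linear equation $u'=t\,u$ yields $u(z)=k\exp\bigl(\int_0^z t(\xi)\,d\xi\bigr)$, with $k$ absorbing $c_1$ and the choice of initial value at the base point $0$.

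The step I expect to require the most care is verifying that the path integral $\int_0^z t(\xi)\,d\xi$ exponentiates to a single-valued function on $\mathbb C$, because $t(z)$ is only meromorphic, with simple poles at each zero of $u_1+\ell u_2$. At a zero $z_0$ of multiplicity $m$ the residue of $t=u'/u$ equals the integer $m$, so $\exp\bigl(\int t\bigr)$ picks up a factor $e^{2\pi i m}=1$ on a small loop around $z_0$ and is therefore entire. This is exactly the assertion that the vertical singularities of the associated Riccati foliation have trivial local monodromy; combined with $\pi_1(\mathbb C)=\{1\}$ (the only singular fibre of the foliation on $\mathbb P^1(\mathbb C)\times\mathbb P^1(\mathbb C)$ lying at $z=\infty$), this gives the triviality of the global holonomy underlying the theorem's label and validates the formula on all of $\mathbb C$.
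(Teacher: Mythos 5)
Your proof is correct, but it takes a genuinely different and more elementary route than the paper's. The paper works entirely on the side of the Riccati foliation: since $a(z)=1$ the only invariant fibre sits over $z=\infty$, so the base $\mathbb{P}^1\setminus\sigma=\mathbb{C}$ is simply connected, the global holonomy is trivial, and the classification of foliations transverse to fibrations (together with a Picard-theorem lemma describing fibred automorphisms of $\mathbb{C}\times\mathbb{P}^1$) yields \emph{abstractly} a holomorphic first integral $g(z,t)=\frac{A(z)t+B(z)}{C(z)t+D(z)}$ with $A,B,C,D$ entire; setting $g=\ell$ on a leaf and solving for $t$ gives the formula. You instead start from the observation that the linear ODE has no finite singular points, take a basis $u_1,u_2$ of entire solutions, and read off $t=u'/u$ for $u=u_1+\ell u_2$, which amounts to exhibiting the first integral explicitly as $\ell=(tu_1-u_1')/(u_2'-tu_2)$; here $AD-BC$ is the Wronskian of $u_1,u_2$, nonvanishing and normalizable to $1$. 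Your residue computation showing that $\exp\bigl(\int_0^z t(\xi)\,d\xi\bigr)$ is single-valued is a point the paper leaves implicit (it also follows at once from $\exp\bigl(\int_0^z u'/u\,d\xi\bigr)=u(z)/u(0)$). What the paper's argument buys is the conceptual framework of trivial global holonomy, which is the theme of that subsection; what yours buys is self-containedness and explicit formulas for $A,B,C,D$. One caveat, shared by both arguments and worth flagging: the formula with base point $0$ degenerates when $u_1(0)+\ell u_2(0)=0$ (the integrand then has a pole at the base point and the integral diverges), and the exceptional solution $u_2$ corresponds to $\ell=\infty$ rather than $\ell\in\mathbb{C}$, so the stated parametrization misses it.
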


\subsubsection{Liouvillian solutions}

One important class of solutions for ODEs is the class of {\it Liouvillian solutions}, a notion
introduced by Liouville, developed by Rosentich and Ross among other authors. The question
of whether a polynomial first order ODE admits a Liouvillian solution or first integral
has been addressed by M. Singer in \cite{Singer} and  others.

Recall the notion of Liouvillian function in $n$ complex variables as introduced in \cite{Singer}.
Such a  function  has (holomorphic) analytic branches  in some Zariski dense open subset of $\mathbb C^n$.
In particular we can ask whether an ODE admits such a solution.
\begin{Question}
What are the polynomial  ODEs of the form $a(z) u^{\prime \prime} + b(z) u ^\prime + c(z)u=0$ admitting
a Liouvillian solution $u(z)$?
\end{Question}

\begin{theorem}[characterization liouville]
\label{Theorem:characterizationliouville}
Consider a polynomial complex linear homogeneous ordinary differential equation of second order given by
\begin{equation}
L[u](z)=a(z)u^{\prime\prime}+b(z)u^\prime+c(z)u=0
\end{equation}
where $a,b,c$ complex polynomials.
Then we have the  following:

\begin{enumerate}
\item If $L[u]=0$  admits a  solution satisfying a Liouvillian relation
then it has a Liouvillian first integral (cf. Corollary pages 674,675 \cite{Singer}).
\item If $L[u]=0$  admits a Liouvillian solution then it has a Liouvillian first integral (cf. Corollary page 674,675 \cite{Singer}).

\item If $L[y]=0$ admits a Liouvillian first integral then its solutions are Liouvillian
and given by one of the forms below:

\begin{itemize}
\item[{\rm (a)}] $
 u(z)=\exp\big(-\int^z\gamma(\eta)d\eta\big)\bigg[ k \int^z \exp\big(\int^{\eta} \frac{2\gamma(\xi)-b(\xi)}{a(\xi)}d\xi\big)d\eta+\ell\bigg]$
for constants $k,\;\ell \in \mathbb C$ and $\gamma(z)$ is rational solution for the Riccati equation.

\item[{\rm(b)}] $u(z) = k_1 + k_2 \int^z \exp\big(-\int^\eta \frac{b(\xi)}{a(\xi)}d\xi\big)d\eta$, for constants $k_1,k_2 \in \mathbb C$.

\end{itemize}
\end{enumerate}

\end{theorem}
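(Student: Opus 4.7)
The plan is to leverage the passage from the original second-order ODE $L[u]=0$ to its Riccati model from the preceding subsection (via $t = u'/u$) and to exploit Singer's dictionary between Liouvillian solutions of $L[u]=0$ and rational solutions of the Riccati. Parts (1) and (2) are direct applications of the cited Corollary of \cite{Singer}: any Liouvillian solution, or any solution satisfying a Liouvillian relation, forces the Picard--Vessiot extension of $L[u]=0$ to lie inside a Liouvillian tower, and Singer's Corollary then promotes this to a Liouvillian first integral. So the real content of the theorem is part (3).

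For part (3) I would first feed the Liouvillian first integral back through Singer's result to extract a \emph{rational} solution $\gamma(z)$ of the Riccati equation associated to $L[u]=0$. This extraction is the conceptual heart of the argument and the main obstacle; everything after it is constructive. From $\gamma$ I would directly exhibit one Liouvillian solution $u_1(z) = \exp\big(-\int^z \gamma(\eta)\,d\eta\big)$ of $L[u]=0$ and verify it by substituting $u_1$ into $L$ and invoking the Riccati identity satisfied by $\gamma$.

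With $u_1$ in hand I would produce a second linearly independent solution by reduction of order: write $u = u_1 v$, substitute in $au''+bu'+cu = 0$, and observe that the $v$-term cancels because $L[u_1]=0$. The resulting equation
\[
a u_1 v'' + (2 a u_1' + b u_1)\, v' = 0
\]
is first order and separable in $w := v'$, with solution $w(\eta) = k \exp\big(\int^\eta (2\gamma(\xi)-b(\xi))/a(\xi)\,d\xi\big)$ (with the sign and normalization of $\gamma$ matched to the paper's Riccati convention, since $u_1'/u_1 = -\gamma$). A second quadrature $v(\eta) = \int^\eta w\,d\eta + \ell$ together with the outer factor $u_1 = \exp(-\int \gamma)$ then assembles exactly into formula (a).

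Formula (b) is the specialization of (a) to $\gamma \equiv 0$: this is a rational solution of the Riccati precisely when $c\equiv 0$, in which case $u_1 \equiv 1$ already solves $L[u]=0$ and the general solution collapses to $u(z) = k_1 + k_2 \int^z \exp\big(-\int^\eta b/a\,d\xi\big)\,d\eta$. All quadratures that appear are Liouvillian because $a$, $b$ and $\gamma$ are rational, so the resulting formulas are automatically Liouvillian, which closes the loop with the hypothesis. The one nontrivial ingredient remains the Singer-theoretic extraction of the rational $\gamma$ from the Liouvillian first integral; the reduction-of-order step that follows is a routine computation.
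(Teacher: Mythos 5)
Your handling of parts (1) and (2) by citation of Singer, and your reduction-of-order computation producing formula (a) from a rational Riccati solution $\gamma$, both match what the paper does (the paper performs the same reduction at the Riccati level, conjugating by $T=t-\gamma$ to reach a Bernoulli equation and then integrating twice). The problem is the step you yourself flag as ``the conceptual heart'': you assert that the Liouvillian first integral can be fed ``back through Singer's result'' to extract a \emph{rational} solution $\gamma$ of the Riccati, and you give no argument for this. Singer's theorem does not deliver that: it delivers a closed rational $1$-form $\eta$ such that $\exp\big(\int\eta\big)$ is an integrating factor for the Riccati form $\Omega$. Converting that integrating factor into a rational solution is precisely the content of the paper's preparatory Lemma, whose proof is a genuine piece of work: one distinguishes whether the Riccati foliation on $\mathbb P^1\times\mathbb P^1$ admits a non-vertical invariant algebraic curve. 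If it does, transversality of the foliation to the vertical fibers forces $\partial f/\partial y$ to be nowhere zero off $a(x)=0$, hence $f(x,y)=A(x)-B(x)y$, and the curve is the graph of the rational solution $\gamma=A/B$. If it does not, one writes $\eta$ via the integration lemma of \cite{C-LN-S1} and compares coefficients in $d\Omega=\eta\wedge\Omega$ to conclude $c\equiv 0$. This dichotomy is indispensable, and it is also where your treatment of formula (b) goes astray: in the paper (b) is not the ``specialization of (a) to $\gamma\equiv 0$'' but the output of the second branch, where no invariant algebraic curve exists and $c\equiv 0$ is forced by a separate computation. (That $\gamma\equiv 0$ happens to solve the Riccati exactly when $c\equiv 0$ is true, but it is the conclusion of that branch, not a case split you are entitled to make a priori.)

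A further caution on why the extraction cannot be waved through: for a general second-order linear ODE with Liouvillian solutions, the classical theory (Kovacic) only guarantees an \emph{algebraic} solution of the associated Riccati equation, of degree possibly $2$, $4$, $6$ or $12$ over the rational functions --- not a rational one. So the implication ``Liouvillian first integral $\Rightarrow$ rational Riccati solution or $c\equiv 0$'' is exactly the delicate point the paper's Lemma is designed to address, and leaving it as a black box leaves the proof of part (3) essentially unestablished. Everything downstream of it in your proposal (the verification that $u_1=\exp\big(-\int\gamma\big)$ solves $L[u]=0$, the separable first-order equation for $v'$, and the two quadratures assembling into (a)) is correct and routine, as you say.
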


\subsection{Third order  equations}

In \S~3 we study third order linear ordinary differential equations  in the
homogeneous and non-homogeneous cases.
For this sake we first extend the notion of regular singular point above to ODEs of order $n \geq 2$.

 \subsubsection{Frobenius method for third order ODEs}

Consider a linear ordinary differential equation with variable coefficients of the form
\begin{equation}\label{eqs1}
a_0(x)y^{(n)}+a_1(x)y^{(n-1)}+\ldots+a_{n-1}(x)y^\prime+a_n(x)y=0.
\end{equation}
We shall assume that the coefficients $a_0,a_1,\ldots,a_{n-1},a_n$ are analytic at
some point $x_0$, and we shall study the case where $a_0(x_0)=0$. A point $x_0$ such
that $a_0(x_0)=0$, is called \textit{singular point} of  equation (\ref{eqs1}), otherwise it is an {\it ordinary point}.

\begin{Definition}{\rm We shall say that $x_0$ is a \textit{regular singular point} of (\ref{eqs1}), if the equation can be written  as follows
\begin{equation}\label{eqs2}
(x-x_0)^ny^{(n)}+b_1(x)(x-x_0)^{n-1}y^{(n-1)}+\ldots+b_{n-1}(x)(x-x_0)y^\prime+b_n(x)y=0
\end{equation}
for $x$ close enough to $x_0$, where the functions $b_1,\ldots,b_{n-1},b_n$ are analytic at $x_0$.}
\end{Definition}

\begin{Remark}$\;${\rm
\begin{enumerate}
\item  If the functions $b_1,\ldots,b_{n-1},b_n$ can be written  as
$$b_k(x)=(x-x_0)^k\beta_k(x)\;\;\mbox{ for every }\;k=1,\ldots,n,$$ where $\beta_1,\ldots,\beta_{n-1},\beta_n$ are analytic functions in $x_0$, we see that (\ref{eqs2}) is transformed into equation
\begin{equation}\label{eqs3}
y^{(n)}+\beta_1(x)y^{(n-1)}+\ldots+\beta_{n-1}(x)y^{\prime}+\beta_n(x)y=0.
\end{equation}
In this case, by classical theorems of ODEs, there are three linearly independent analytic solutions
$y_j(x)=\sum\limits_{n=0}^\infty a_n ^j x^n$, converging for $|x|<R$. Moreover, any solution is a linear combination of these solutions.

\item An  equation of the form

$$c_0(x)(x-x_0)^ny^{(n)}+c_1(x)(x-x_0)^{n-1}y^{(n-1)}+\ldots+c_{n-1}(x)y^\prime+c_n(x)y=0$$
has a regular singular point at $x_0$ if $c_0,c_1,\ldots,c_{n-1},c_n$ are analytic at $x_0$, and $c_0(x_0)\neq0$.
\end{enumerate}}
\end{Remark}

We first obtain an existence theorem like Frobenius theorem above. This reads as:

\begin{theorem}[Existence of a first solution]\label{thmfrobb3I}{\rm Consider the equation
\begin{equation}\label{eq1}
L(y):=x^3y^{\prime\prime\prime}+x^2a(x)y^{\prime\prime}+xb(x)y^\prime+c(x)y=0,
\end{equation}
 with $a(x),b(x)$ and $c(x)$ real analytic functions defined for $|x|<R$, $R>0$. Let $r_1,\;r_2$ and $r_3$  be the roots of the indicial polynomial $$q(r)=r(r-1)(r-2)+r(r-1)a(0)+rb(0)+c(0),$$ ordered in such a way that
  $\mbox{Re}(r_1)\geq \mbox{Re}(r_2)\geq\mbox{Re}(r_3)$. Then for $0<|x|<R$ there is a solution $\varphi_1$ of  equation (\ref{eq1}) given by
$$\varphi_1(x)=|x|^{r_1}\sum^{\infty}_{n=0}d_nx^n,\;\;\;d_0=1,$$where the series converges for $|x|<R$.
Moreover, if $r_1-r_2\notin\mathbb{Z}^+_0$, $r_1-r_3\notin\mathbb{Z}^+_0$ and $r_2-r_3\notin\mathbb{Z}^+_0$ then there exist other two linearly independent solutions $\varphi_2$ and $\varphi_3$ defined in $0<|x|<R$, given by:
$$\varphi_2(x)=|x|^{r_2}\sum^{\infty}_{n=0}\tilde{d_n}x^n\;\;\;\;\tilde{d_0}=1$$
and
$$\varphi_3(x)=|x|^{r_3}\sum^{\infty}_{n=0}\hat{d_n}x^n\;\;\;\;\hat{d_0}=1$$where the series converge for $|x|<R$. The coefficients $d_n$, $\tilde{d_n}$, $\hat{d_n}$ can be obtained replacing the solutions in equation (\ref{eq1}).
}

\end{theorem}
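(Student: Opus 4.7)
The plan is to imitate the classical Frobenius scheme for order two, replacing the quadratic leading polynomial by a cubic one. Expand the data as convergent power series $a(x)=\sum_{k\ge 0}a_k x^k$, $b(x)=\sum b_k x^k$, $c(x)=\sum c_k x^k$ on $|x|<R$. For the positive-$x$ branch write $\varphi(x)=x^r\sum_{n\ge 0}d_n x^n$ (the negative branch differs only cosmetically via $|x|^r$). Using $x^3(x^{r+n})'''=(r+n)(r+n-1)(r+n-2)x^{r+n}$ and the analogous identities for the $y''$ and $y'$ terms, a direct rearrangement of $L(\varphi)$ by powers of $x$ yields
\begin{equation*}
L(\varphi)=x^r\sum_{n\ge 0}\Bigl[q(r+n)\,d_n+\sum_{m=0}^{n-1}\bigl(a_{n-m}(r+m)(r+m-1)+b_{n-m}(r+m)+c_{n-m}\bigr)d_m\Bigr]x^n,
\end{equation*}
where $q(r)=r(r-1)(r-2)+r(r-1)a_0+rb_0+c_0$ is precisely the indicial polynomial. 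The coefficient at $n=0$ forces $q(r)d_0=0$, so $r$ must be a root of $q$, and for $n\ge 1$ one obtains the recurrence
\begin{equation*}
q(r+n)\,d_n=-\sum_{m=0}^{n-1}\bigl(a_{n-m}(r+m)(r+m-1)+b_{n-m}(r+m)+c_{n-m}\bigr)d_m.
\end{equation*}

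Next I would verify that this recurrence is solvable for each of the three roots. For $r=r_1$, the root with largest real part, $\mathrm{Re}(r_1+n)>\mathrm{Re}(r_j)$ for every root $r_j$ and every $n\ge 1$, so $q(r_1+n)\ne 0$ and the $d_n$ are determined uniquely from $d_0=1$. For $r=r_2$ or $r_3$, solvability reduces to $r_i+n\ne r_j$ for $j\ne i$ and $n\ge 1$, i.e.\ $r_1-r_2,\ r_1-r_3,\ r_2-r_3\notin\mathbb{Z}^+$, which is exactly the hypothesis. Linear independence of $\varphi_1,\varphi_2,\varphi_3$ then follows because the three series have distinct leading exponents $r_1,r_2,r_3$ and each leading coefficient is normalized to $1$.

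The core analytic step is convergence of the formal series in $|x|<R$. Fix $\rho$ with $0<\rho<R$; Cauchy's estimates give $M>0$ with $|a_k|,|b_k|,|c_k|\le M\rho^{-k}$. Since $|q(r+n)|\sim n^3$ and the bracket inside the sum is $O(m^2)$ against a weight $\rho^{-(n-m)}$, the recurrence yields an inductive bound of the form $|d_n|\le K\tilde\rho^{-n}$ for any $\tilde\rho<\rho$: setting $c_n=|d_n|\tilde\rho^{n}$, the cubic growth of $|q(r+n)|$ together with the geometric factor $(\tilde\rho/\rho)^{k}$ (with $k=n-m$) bounds $c_n$ by a constant multiple of $n^{-1}\max_{m<n}c_m$, so after finitely many indices the sequence $(c_n)$ is uniformly bounded. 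Letting $\tilde\rho\uparrow R$ gives convergence on $|x|<R$, and the coefficients can be read off inductively by substituting the ansatz back into the equation.

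The main obstacle is this last majorant estimate. The algebraic formation of the recurrence, the identification of the indicial polynomial, and the resolvability under the stated hypothesis on the roots are essentially bookkeeping. What requires care is that the divisor $q(r+n)$ must supply its full cubic growth uniformly in a possibly complex root $r$ (so that the $n^2$ coming from $(r+m)(r+m-1)$ inside the convolution is absorbed), and that the geometric contribution from the Cauchy bounds has to be summed without cancelling the $1/n$ one extracts from $1/q(r+n)$. Once these quantitative bookkeeping points are checked, the statement of the theorem follows.
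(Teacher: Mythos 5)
Your proposal follows essentially the same route as the paper: the same Frobenius ansatz and recurrence $q(r+n)d_n=-\sum_{m<n}[(r+m)(r+m-1)a_{n-m}+(r+m)b_{n-m}+c_{n-m}]d_m$, the same observation that $q(r_i+n)\neq 0$ under the stated non-integer-difference hypotheses, and a majorant argument for convergence based on Cauchy bounds $|a_k|,|b_k|,|c_k|\le M\rho^{-k}$ and the cubic lower bound $|q(r+n)|\gtrsim n^3$. The only (immaterial) difference is that the paper builds an explicit majorant sequence $\gamma_n$ and applies the ratio test to $\sum\gamma_n x^n$, whereas you show boundedness of the normalized coefficients $|d_n|\tilde\rho^{\,n}$ directly; both yield convergence on $|x|<R$ after letting the auxiliary radius tend to $R$.
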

We shall consider a first simplest case of an equation, not of type (\ref{eqs3}),
that has a regular singular point. This equation is an Euler equation,
that is of the form (\ref{eqs2}) with $b_1,\ldots,b_n$ constants.

In the case of  a second order differential equation with a  regular singular point
\begin{equation}\label{eqsl4}
x^2y^{\prime\prime}+xb(x)y^\prime+c(x)y=0,\;x>0,
\end{equation}
where $b,c$ are analytic at $0$, the classical method of Frobenius  determines the form of the solutions of (\ref{eqsl4}) which are given by
$$\psi(x)=x^{r}\rho(x)+x^{s}\eta(x)\log x,$$where $r,s$ are constants and $\rho,\eta$ are analytic at $0$.

In this work we shall study the general third order linear equation with a  regular singular point, and describe the method of obtaining  solutions in a neighborhood of the singular point.
Motivated by the classical method of Frobenius and using some
techniques introduced in (\cite{C}, Chapter IV Section 4.5) we prove the following complete result:

Theorem~\ref{thmfrobb3I} is valid in the case of complex ordinary differential equations.

\begin{theorem}[non exceptional case]\label{thmfrobb3complexI}{\rm Consider the equation
\begin{equation}\label{eqc1}
L(u):=z^3u^{\prime\prime\prime}+z^2a(z)u^{\prime\prime}+zb(z)u^\prime+c(z)u=0,
\end{equation}
 with $a(z),b(z)$ and $c(z)$ analytic for $|z|<R$, $R>0$. Let $r_1,\;r_2$ and $r_3$
 be the roots of the indicial
 polynomial $$q(r)=r(r-1)(r-2)+r(r-1)a(0)+rb(0)+c(0).$$ Let us assume that $\mbox{Re}(r_1)\geq \mbox{Re}(r_2)\geq\mbox{Re}(r_3)$. Then for $0<|z|<R$ there
 is a solution $\varphi_1$ of  equation (\ref{eqc1}) given by
$$\varphi_1(z)=z^{r_1}\sum^{\infty}_{n=0}d_nz^n,\;\;\;d_0=1,$$where the series converges for $|z|<R$. Moreover, if $r_1-r_2\notin\mathbb{Z}^+_0$, $r_1-r_3\notin\mathbb{Z}^+_0$ and $r_2-r_3\notin\mathbb{Z}^+_0$ then there exist other two linearly independent solutions $\varphi_2$ and $\varphi_3$ defined in $0<|z|<R$, given by:
$$\varphi_2(z)=z^{r_2}\sum^{\infty}_{n=0}\tilde{d_n}z^n\;\;\;\;\tilde{d_0}=1$$
and
$$\varphi_3(z)=z^{r_3}\sum^{\infty}_{n=0}\hat{d_n}z^n\;\;\;\;\hat{d_0}=1$$where the series converge for $|z|<R.$ The coefficients $d_n$, $\tilde{d_n}$, $\hat{d_n}$ can be obtained replacing the solutions in equation (\ref{eqc1}).
}

\end{theorem}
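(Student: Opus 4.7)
The plan is to run a standard Frobenius ansatz for each root and then prove convergence via a majorant estimate; the complex case works identically to the real case with $|x|^{r_i}$ replaced by the analytic branch $z^{r_i}$ on a slit neighborhood of the origin. First, I would substitute $\varphi(z)=z^r\sum_{n\geq 0}d_n z^n$ into $L(u)=0$. Writing $a(z)=\sum a_k z^k$, $b(z)=\sum b_k z^k$, $c(z)=\sum c_k z^k$ (convergent for $|z|<R$) and collecting the coefficient of $z^{n+r}$, one obtains
\begin{equation*}
q(n+r)\,d_n \;+\; \sum_{k=1}^{n}\!\bigl[a_k(n-k+r)(n-k+r-1)+b_k(n-k+r)+c_k\bigr]d_{n-k}\;=\;0,
\end{equation*}
where $q(r)=r(r-1)(r-2)+a(0)r(r-1)+b(0)r+c(0)$. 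For $n=0$ this forces $q(r)=0$, giving the indicial polynomial and selecting $r\in\{r_1,r_2,r_3\}$. For $n\geq 1$, whenever $q(n+r)\neq 0$ the recursion determines $d_n$ uniquely from $d_0,\ldots,d_{n-1}$.

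Next I would verify the non-vanishing of $q(n+r)$ for the three choices of $r$. For $r=r_1$, the hypothesis $\mathrm{Re}(r_1)\geq\mathrm{Re}(r_j)$ makes $n+r_1$ have strictly larger real part than $r_2,r_3$ whenever $n\geq 1$, so $q(n+r_1)\neq 0$ with no extra assumption and the first series solution exists formally. For $r=r_2$ and $r=r_3$, the hypotheses $r_i-r_j\notin\mathbb{Z}_0^+$ for $i<j$ give exactly $n+r_2\notin\{r_1,r_3\}$ and $n+r_3\notin\{r_1,r_2\}$ for $n\geq 1$, which yields the formal series $\varphi_2$ and $\varphi_3$. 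Linear independence at the formal level follows because the three series start with distinct leading powers $z^{r_1},z^{r_2},z^{r_3}$ (the noninteger differences prevent cancellation among the leading monomials).

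The main obstacle, and the only genuinely analytic step, is convergence on $|z|<R$. For this I would use the majorant method. Fix $0<R'<R$ and choose $M>0$ such that $|a_k|,|b_k|,|c_k|\leq M/R'^{\,k}$ for all $k\geq 0$. Writing $\rho=|n+r|$, the recurrence gives a bound
\begin{equation*}
|q(n+r)|\,|d_n|\;\leq\;M\sum_{k=1}^{n}\frac{(n-k+\rho_0)^2+1}{R'^{\,k}}\,|d_{n-k}|
\end{equation*}
for a constant $\rho_0$ depending on $r$. Since $|q(n+r)|\sim n^3$ as $n\to\infty$, there is an $N$ beyond which $|q(n+r)|\geq \tfrac12 n^3$, and for $n<N$ the finitely many denominators are bounded away from zero by the non-resonance hypothesis. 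A standard induction, comparing $|d_n|$ with a geometric majorant $C/R'^{\,n}$ and using $\sum_{k=1}^\infty k^2/R'^{\,k}<\infty$ after dividing by $n^3$, then shows $|d_n|\leq C/R'^{\,n}$ for $C$ large enough. Since $R'<R$ was arbitrary, each series converges for $|z|<R$. Finally, termwise differentiation inside the disc of convergence shows that the three series, multiplied by $z^{r_1}, z^{r_2}, z^{r_3}$ respectively (using any fixed branch of $z^{r_i}$ on the slit disc $0<|z|<R$), are genuine solutions of (\ref{eqc1}), completing the proof.
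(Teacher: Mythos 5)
Your proposal is correct and follows essentially the same route as the paper: the Frobenius ansatz yielding the recurrence $q(n+r)d_n=-\sum_{k=1}^n[\,\cdots]d_{n-k}$, the verification that $q(n+r_i)=n(n+r_i-r_j)(n+r_i-r_k)\neq0$ under the ordering of real parts and the non-resonance hypotheses, and a majorant argument for convergence on $|z|<\rho$ for every $\rho<R$. The only cosmetic difference is that the paper builds an auxiliary sequence $\gamma_n$ satisfying the majorizing recursion exactly and applies the ratio test to $\sum\gamma_n z^n$, whereas you induct directly on a geometric bound $|d_n|\leq C/R'^{\,n}$; both are standard and equivalent.
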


\subsubsection{Special cases:}

We divide the special cases into four groups, according to the
roots $r_1,r_2,r_3$ (always ordered such that $Re(r_1)\geq Re(r_2)\geq Re(r_3)$) of the indicial polynomial, satisfy:
\begin{itemize}
\item[(i)] $r_1=r_2=r_3$
\item[(ii)] $r_1=r_2$ and $r_1-r_3\in\mathbb{Z}^+$
\item[(iii)] $r_2=r_3$ and $r_1-r_2\in\mathbb{Z}^+$
\item[(iv)] $r_1-r_2\in\mathbb{Z}^+$ and $r_2-r_3\in\mathbb{Z}^+$
\end{itemize}
We want to find solutions defined for $|x|<R$.

\begin{theorem}[exceptional cases]\label{thmfrobb3II}{\rm Consider the equation

$$L(y):=x^3y^{\prime\prime\prime}+x^2a(x)y^{\prime\prime}+xb(x)y^\prime+c(x)y=0,$$

 with $a(x),b(x)$ and $c(x)$ analytic for $|x|<R$, $R>0$. Let $r_1,\;r_2$ and $r_3$ ($\mbox{Re}(r_1)\geq \mbox{Re}(r_2)\geq\mbox{Re}(r_3)$) be roots of the indicial polynomial $$q(r)=r(r-1)(r-2)+r(r-1)a(0)+rb(0)+c(0).$$ \begin{itemize}
\item[(i)] If $r_1=r_2=r_3$ there exist three linearly independent solutions $\varphi_1,\varphi_2,\varphi_3$ defined in $0<|x|<R$, which has the following form:
$$\varphi_1(x)=|x|^{r_1}\sigma_1(x),\;\;\varphi_2(x)=|x|^{r_1+1}\sigma_2(x)+(\log |x|)\varphi_1(x)$$
and
$$\varphi_3(x)=|x|^{r_1+1}\sigma_3(x)+2(\log |x|)\varphi_2(x)-(\log|x|)^2\varphi_1(x), $$
where $\sigma_1,\sigma_2,\sigma_3$ are analytic in $|x|<R$ and $\sigma_1(0)\neq0$.

\item[(ii)] If $r_1=r_2$ and $r_1-r_3\in\mathbb{Z}^+$ there exist three linearly independent solutions   $\varphi_1,\varphi_2,\varphi_3$ defined in $0<|x|<R$, which has the form:
$$\varphi_1(x)=|x|^{r_1}\sigma_1(x),\;\;\varphi_2(x)=|x|^{r_1+1}\sigma_2(x)+(\log |x|)\varphi_1(x)$$
and
$$\varphi_3(x)=|x|^{r_3+2}\sigma_3(x)+c\;(\log |x|)\varphi_1(x), $$
where $c$ constant, $\sigma_1,\sigma_2,\sigma_3$ are analytic in $|x|<R$, $\sigma_1(0)\neq0$ and $\sigma_3(0)\neq0$ .

\item[(iii)] If $r_2=r_3$ and $r_1-r_2\in\mathbb{Z}^+$ there exist three linearly independent solutions $\varphi_1,\varphi_2,\varphi_3$ defined in $0<|x|<R$, which has the form:
$$\varphi_2(x)=|x|^{r_2}\sigma_2(x),\;\;\varphi_3(x)=|x|^{r_2+1}\sigma_3(x)+(\log |x|)\varphi_2(x)$$
and
$$\varphi_1(x)=|x|^{r_2+2}\sigma_1(x)+c\;(\log |x|)\varphi_2(x), $$
where $c$ constant, $\sigma_1,\sigma_2,\sigma_3$ are analytic in $|x|<R$, $\sigma_1(0)\neq0$ and $\sigma_2(0)\neq0$.

\item[(iv)] If $r_1-r_2\in\mathbb{Z}^+$ and $r_2-r_3\in\mathbb{Z}^+$ there exist three linearly independent solutions $\varphi_1,\varphi_2,\varphi_3$ defined in $0<|x|<R$, which has the form:
$$\varphi_1(x)=|x|^{r_1}\sigma_1(x),\;\;\varphi_2(x)=|x|^{r_2}\sigma_2(x)+c\;(\log |x|)\varphi_1(x)$$
and
$$\varphi_3(x)=|x|^{r_3}\sigma_3(x)+\tilde{c}\;(\log |x|)\varphi_2(x), $$
where $c$ and $\tilde{c}$ are constants, $\sigma_1,\sigma_2,\sigma_3$ are analytic in $|x|<R$, $\sigma_1(0)\neq0$, $\sigma_2(0)\neq0$ and $\sigma_3(0)\neq0$.
\end{itemize}}
\end{theorem}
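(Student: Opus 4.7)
The plan is to adapt the parameter-derivative technique used in the proof of the classical Frobenius theorem (see \cite{C}, Chapter~IV, \S4.5) to the third-order setting. We start from the formal two-variable expression
\begin{equation*}
y(z,r)=z^{r}\sum_{n\geq 0}d_n(r)z^{n},\qquad d_0(r)=1,
\end{equation*}
with $d_n(r)$ determined by the recurrence $q(r+n)d_n(r)=P_n(d_0,\dots,d_{n-1})(r)$ arising from imposing $L[y](z,r)=q(r)z^{r}$, where $q$ is the indicial polynomial. Each of the three solutions to be produced will be obtained by evaluating $y(z,r)$, or a suitable modification of the form $(r-r_k)^{j}y(z,r)$, at an appropriate root $r_\ell$ of $q$, possibly after applying one or two derivatives $\partial/\partial r$.

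Case (i) is the cleanest: here $q(r)=(r-r_1)^{3}$, so $q(r+n)\neq 0$ for every $n\geq 1$, and the recurrence is unobstructed near $r=r_1$. Because $q(r_1)=q'(r_1)=q''(r_1)=0$, the three formal series $y(z,r_1)$, $\partial_r y(z,r_1)$, and $\partial_r^{2}y(z,r_1)$ are all formal solutions of $L[y]=0$. Expanding via the Leibniz rule,
\begin{equation*}
\partial_r^{k}y(z,r)=\sum_{j=0}^{k}\binom{k}{j}(\log z)^{j}\,z^{r}\sum_{n\geq 0}d_n^{(k-j)}(r)z^{n},
\end{equation*}
together with $d_0\equiv 1$ (so $d_0'=d_0''\equiv 0$), yields the three solutions in the prescribed normal form, with the analytic correction parts starting at $z^{r_1+1}$ rather than $z^{r_1}$, and the $(\log|x|)^{2}$-term in $\varphi_3$ coming from the $j=2$ contribution.

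The substantive difficulty is in cases (ii), (iii) and (iv), where an integer gap $r_j-r_k\in\mathbb{Z}^{+}$ forces $q(r+N)$ to vanish at the value of $r$ associated with the smaller root (with $N$ the gap), obstructing the naive recurrence for $d_n(r)$. The remedy is to multiply the candidate series by the smallest power of $(r-r_{\mathrm{small}})$ that cancels the pole introduced in $d_n(r)$ at the obstructed step. For example, in case (ii) one has $q(r)=(r-r_1)^{2}(r-r_3)$, so one sets $\tilde{y}(z,r)=(r-r_3)^{2}y(z,r)$ and obtains $L[\tilde{y}](z,r)=(r-r_3)^{3}(r-r_1)^{2}z^{r}$, a quantity that vanishes together with its first two $r$-derivatives at $r=r_3$; analogous choices work for (iii) and (iv). Differentiating in $r$ the appropriate number of times and setting $r$ equal to the corresponding root yields the two remaining solutions, the logarithmic contributions again arising from the Leibniz expansion above. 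The constants $c,\tilde{c}$ and the shifted leading exponents in the statement are read off from the order of vanishing of the multiplying factor $(r-r_k)^{j}$ in each $d_n(r)$.

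Linear independence of $\varphi_1,\varphi_2,\varphi_3$ then follows by inspection of their leading terms in $|x|^{r_\ell}$ and $(\log|x|)^{k}$, which differ either in exponent or in the power of $\log|x|$ they carry. Convergence of the analytic parts $\sigma_j$ on $|x|<R$ reduces, by a majorant argument uniform in $r$ on a small disc around each root, to the convergence of the unobstructed Frobenius series established in Theorem~\ref{thmfrobb3complexI}; the $r$-derivatives of the majorant are controlled by the same geometric bound via Cauchy's estimates. The main obstacle, and the step requiring the most care, is the bookkeeping in the resonant cases (ii)-(iv): one must verify that the double or triple zero of $(r-r_k)^{j}q(r)$ at $r_k$ distributes across the $r$-derivatives exactly as needed so that the resulting solutions assume the precise forms stated, with the correct shifted leading exponents and the correct $\log|x|$ multiplicities.
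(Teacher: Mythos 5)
Your proposal follows essentially the same route as the paper's proof: the Frobenius identity $L(\varphi)(x,r)=d_0q(r)x^r$, differentiation in $r$ at the repeated root for case (i), and in the resonant cases (ii)--(iv) the replacement of $d_0=1$ by $(r-r_{\mathrm{small}})^j$ to clear the pole at the obstructed step before differentiating, with the logarithms arising from $\partial_r x^r=x^r\log x$. The only cosmetic difference is that the paper builds the regularized series directly from the recurrence with $\tilde D_0(r)=(r-r_k)^j$ rather than multiplying the (pole-carrying) series $y(z,r)$ after the fact, which is what your phrasing amounts to anyway; your convergence remark is in fact more explicit than the paper's, which defers to the estimates of Theorem~\ref{thmfrobb3I}.
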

Theorem~\ref{thmfrobb3II} is valid in the case of complex ordinary differential equations.

\begin{theorem}[complex analytic ODEs]\label{thmfrobb3complexII}
 Consider the equation

$$L(u):=z^3u^{\prime\prime\prime}+z^2a(z)u^{\prime\prime}+zb(z)u^\prime+c(z)u=0,$$

 with $a(z),b(z)$ and $c(z)$ complex analytic for $|z|<R$, $R>0$. Let $r_1,\;r_2$ and $r_3$ be the roots of the indicial polynomial $$q(r)=r(r-1)(r-2)+r(r-1)a(0)+rb(0)+c(0),$$ ordered such that
  $\mbox{Re}(r_1)\geq \mbox{Re}(r_2)\geq\mbox{Re}(r_3)$. \begin{itemize}
\item[(i)] If $r_1=r_2=r_3$ then there exist three linearly independent solutions $\varphi_1,\varphi_2,\varphi_3$ of the form:
$$\varphi_1(z)=z^{r_1}\sigma_1(z),\;\;\varphi_2(z)=z^{r_1+1}\sigma_2(z)+(\log z)\varphi_1(z)$$
and
$$\varphi_3(z)=z^{r_1+1}\sigma_3(z)+2(\log z)\varphi_2(z)-(\log z)^2\varphi_1(z), $$
where $\sigma_1,\sigma_2,\sigma_3$ are analytic in $|z|<R$ and $\sigma_1(0)\neq0$.

\item[(ii)] If $r_1=r_2$ and $r_1-r_3\in\mathbb{Z}^+$ there exist three linearly independent solutions   $\varphi_1,\varphi_2,\varphi_3$ of the form:
$$\varphi_1(z)=z^{r_1}\sigma_1(z),\;\;\varphi_2(z)=z^{r_1+1}\sigma_2(z)+(\log z)\varphi_1(z)$$
and
$$\varphi_3(x)=z^{r_3+2}\sigma_3(z)+c\;(\log z)\varphi_1(z), $$
where $c$ constant, $\sigma_1,\sigma_2,\sigma_3$ are analytic in $|z|<R$, $\sigma_1(0)\neq0$ and $\sigma_3(0)\neq0$ .

\item[(iii)] If $r_2=r_3$ and $r_1-r_2\in\mathbb{Z}^+$ there exist three linearly independent solutions $\varphi_1,\varphi_2,\varphi_3$ of the form:
$$\varphi_2(z)=z^{r_2}\sigma_2(z),\;\;\varphi_3(z)=z^{r_2+1}\sigma_3(z)+(\log z)\varphi_2(z)$$
and
$$\varphi_1(z)=z^{r_2+2}\sigma_1(z)+c\;(\log z)\varphi_2(z), $$
where $c$ constant, $\sigma_1,\sigma_2,\sigma_3$ are analytic in $|z|<R$, $\sigma_1(0)\neq0$ and $\sigma_2(0)\neq0$.

\item[(iv)] If $r_1-r_2\in\mathbb{Z}^+$ and $r_2-r_3\in\mathbb{Z}^+$ there exist three linearly independent solutions $\varphi_1,\varphi_2,\varphi_3$ of  the form:
$$\varphi_1(z)=z^{r_1}\sigma_1(z),\;\;\varphi_2(z)=z^{r_2}\sigma_2(z)+c\;(\log z)\varphi_1(z)$$
and
$$\varphi_3(z)=z^{r_3}\sigma_3(z)+\tilde{c}\;(\log z)\varphi_2(z), $$
where $c$ and $\tilde{c}$ are constants, $\sigma_1,\sigma_2,\sigma_3$ are analytic in $|z|<R$, $\sigma_1(0)\neq0$, $\sigma_2(0)\neq0$ and $\sigma_3(0)\neq0$.
\end{itemize}
\end{theorem}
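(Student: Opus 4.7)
The strategy is to imitate the real-variable proof of Theorem~\ref{thmfrobb3II} on the universal cover $\widetilde{D^*}$ of the punctured disc $\{0<|z|<R\}$, where $\log z$ and the power functions $z^r=\exp(r\log z)$ are single-valued holomorphic functions. Since the coefficients $a(z),b(z),c(z)$ of $L$ are holomorphic on $|z|<R$, every formal manipulation involving $\log|x|$ and $|x|^r$ in the real case has a verbatim holomorphic counterpart on $\widetilde{D^*}$ with $\log z$ and $z^r$, and the recursion relations that determine the Taylor coefficients of $\sigma_1,\sigma_2,\sigma_3$ are purely algebraic in the data, so they transpose unchanged.

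\emph{Construction via a holomorphic parameter.} Introduce the two-variable ansatz $u(z,r)=z^r\sum_{n\ge 0}c_n(r)z^n$ with $c_0(r)=1$, where the $c_n(r)$ are determined recursively from the identity $L[u(z,r)]=q(r)z^r$, with $q(r)=r(r-1)(r-2)+r(r-1)a(0)+rb(0)+c(0)$ the indicial polynomial. The recursion reads $q(n+r)c_n(r)=F_n(c_0(r),\dots,c_{n-1}(r);r)$ with $F_n$ a polynomial in $r$ whose coefficients involve the Taylor data of $a,b,c$. When $r\in\{r_1,r_2,r_3\}$ is non-resonant, one obtains the three power series solutions already covered by Theorem~\ref{thmfrobb3complexI}.

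\emph{Exceptional cases via $\partial_r$.} For case (i), $r_1=r_2=r_3$ is a triple root of $q$, hence $q(r_1)=q'(r_1)=q''(r_1)=0$. Applying $\partial_r$ and $\partial_r^2$ to $L[u(z,r)]=q(r)z^r$ and specializing at $r=r_1$ produces three holomorphic solutions $\varphi_1=u(z,r_1)$, $\varphi_2=\partial_r u(z,r_1)$, $\varphi_3=\partial_r^2 u(z,r_1)$. Expanding $\partial_r u=(\log z)u+z^r\partial_r\sigma$, and using $\partial_r c_0=\partial_r^2 c_0=0$ so that $\partial_r\sigma$ and $\partial_r^2\sigma$ begin at order $z$, one recovers precisely the stated shapes, including the combinatorial identity $\varphi_3=z^{r_1+1}\sigma_3(z)+2(\log z)\varphi_2(z)-(\log z)^2\varphi_1(z)$. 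For cases (ii), (iii) and (iv), the resonance $r_j-r_k=N\in\mathbb Z^+$ forces a simple (or double) pole in $c_N(r)$ at the lower root; following the procedure of Coddington--Levinson (\cite{C}, Chapter IV), one replaces $u(z,r)$ by $(r-r_k)u(z,r)$ (or $(r-r_k)^2 u(z,r)$ for a double root) and then differentiates in $r$. The constants $c$ and $\tilde{c}$ in the statement are exactly the residues produced by this operation.

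\emph{Convergence and main obstacle.} Convergence of the holomorphic factors $\sigma_j(z)$ throughout $|z|<R$ follows from the same majorant estimate used in Theorem~\ref{thmfrobb3complexI}: for large $n$ one has $|q(n+r)|\sim n^3$, while the $F_n$ admit Cauchy-type bounds inherited from any radius $R'<R$ in which $a,b,c$ converge. The main technical obstacle will be case (iv), where the two integer resonances $r_1-r_2,\,r_2-r_3\in\mathbb Z^+$ interact: one has to track the pole orders of $(r-r_2)(r-r_3)c_N(r)$ carefully to verify that only single $\log z$ corrections (weighted by the constants $c$ and $\tilde c$) appear, rather than a spurious $(\log z)^2$ term. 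This is ensured by the fact that $r_1$ is a simple root of $q$ in case (iv), so only the second chain of differentiations contributes a logarithm at each resonance, and the book-keeping matches that of case (i) only when all three roots coincide.
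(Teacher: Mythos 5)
Your proposal is correct and follows essentially the same route as the paper: the paper proves the real case (Theorem~\ref{thmfrobb3II}) by the parametrized ansatz $\varphi(x,r)$ with $L(\varphi)(x,r)=d_0q(r)x^r$, differentiation in $r$ for the triple root, the substitution $\tilde D_0(r)=(r-r_3)^2$ (resp.\ $r-r_2$, $r-r_3$) in the resonant cases, and the same majorant estimate for convergence, and then simply asserts that this argument transposes verbatim to the complex setting with $|x|^r,\log|x|$ replaced by $z^r,\log z$ on the punctured disc. Your universal-cover formulation and the residue interpretation of the constants $c,\tilde c$ are just a slightly more explicit packaging of that same argument.
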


The relevance  of this situation is explained by  Example~\ref{exeminf}.

\noindent{\bf Acknowledgement}: Theorem~\ref{thmfrobb3complexII} or, more generally, Frobenius method for order $n$ linear homogeneous
complex analytic equations can be found in \cite{Ince} throughout Chapter XVI (\S 16.1 page 396 and on).
Indeed, in \S 16.1 (page 396) the author proceeds in the classical way to prove the existence of a formal solution of the form $z^r \sum\limits_{n=0}^\infty a_n z^n$ where $r$ is a (maybe complex) root of the indicial equation. Next, in \S 16.2 page 398 the same author makes use of Cauchy Integral Formula to prove, always in the case of a complex regular singular point, the existence of  a first "convergent" solution of the form $z^r \sum\limits_{n=0}^\infty a_n z^n$ where the  power series $\sum\limits_{n=0}^\infty a_n z^n$ converges in some neighborhood of the singular point. Next, in \S 16.3 page 400 the  author hints the form of the other possible solutions according to the disposition of the roots of the indicial equation. Our Theorems~\ref{thmfrobb3complexI} and \ref{thmfrobb3complexII} above give a solid confirmation of this statement. Indeed, we discuss more accurately the connection between the disposition of the roots of the indicial equation and the types of the solutions. Moreover, our proof of the convergence is more elementary, without the need of Cauchy Integral Formula, and our estimates in the coefficients of the power series
are more clear and may be used in a computing process in order to control the speed of the convergence, something which is fundamental in applications to engineering. Finally, it is not clear from the argumentation in \cite{Ince} that the real case, ie., the case of real ODEs may be treated in the same way. Indeed, the roots of the indicial equation may be complex non-real and therefore the coefficients in the power series in the formal solution may be complex non-real, which would not be useful in the search of real solutions.
A stronger evidence of this fact is given in Example~\ref{Example:realcomplex} where a real ODE of third order gives rise to one real root and two complex conjugate roots for the indicial equation. This complexity spreads throughout the recurrence and gives complex coefficients for the power series of the solutions. This shows that the
natural idea of starting from a real analytic ODE, considering its complexification, applying the Frobenius
methods in \cite{Ince} and then considering a sort of "decomplexification" of the solution, may not be
a reasonable way of finding the (real) solutions of the original (real) ODE.
To overcome this difficult is one of the gains of our \S 3.

\subsubsection{Convergence of formal solutions in order three}

Similarly to Theorem~\ref{Theorem:B} we may prove, for third order ODEs the following
convergence theorem:

\begin{theorem}[1 formal solution third order regular singularity]
\label{Theorem:1formalregularorderthree}
Consider the third order ordinary differential equation given by
\begin{equation}\label{edo3}
a(x)y^{\prime\prime\prime}+b(x)y^{\prime\prime}+c(x)y^\prime+d(x)y=0,
\end{equation}
where $a,b,c$ are analytic functions at the origin $0 \in \mathbb R$.
Assume that \eqref{edo3}
has at $x=0$  an ordinary point or a regular singular point.
Then a formal solution $\hat y(x) = \sum\limits_{n=0}^\infty a_n x^n$ is always convergent in some neighborhood
$|x|<R$ of the origin. Indeed, this solution  converges in the same maximal interval $]-R,R[\subset \mathbb R$ where the coefficients $a(x), b(x), c(x)$ are analytic.\end{theorem}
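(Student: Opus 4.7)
My plan is to mimic the argument for Theorem~\ref{Theorem:B}, reducing the convergence statement to the Frobenius-type existence theorems \ref{thmfrobb3I} and \ref{thmfrobb3II}. The two cases (ordinary versus regular singular) are handled separately.

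First I handle the ordinary point case. If $a(0)\neq 0$, I divide by $a(x)$ to put the equation in standard form with coefficients analytic on $(-R,R)$. The classical Cauchy existence theorem for linear analytic ODEs produces a three-dimensional vector space of analytic solutions on $(-R,R)$; on the other hand, a formal power series solution is completely determined by the data $\hat y(0), \hat y'(0), \hat y''(0)$ through the recurrence obtained by substitution. Matching these three values to an analytic solution identifies $\hat y$ with that analytic solution, so $\hat y$ converges on $(-R,R)$.

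In the regular singular point case the equation takes the form
$$x^{3}y^{\prime\prime\prime}+x^{2}A(x)y^{\prime\prime}+xB(x)y^{\prime}+C(x)y=0$$
with $A,B,C$ analytic on $|x|<R$. Theorems \ref{thmfrobb3I} and \ref{thmfrobb3II} yield three linearly independent convergent solutions $\varphi_{1},\varphi_{2},\varphi_{3}$ on $0<|x|<R$, each of explicit Frobenius type involving $|x|^{r_{i}}$ and, in the exceptional cases, one or two powers of $\log|x|$. Let $\hat\varphi_{1},\hat\varphi_{2},\hat\varphi_{3}$ be their formal expansions, regarded as elements of the formal module $M$ spanned over $\mathbb{C}[[x]]$ by the symbols $|x|^{r_{i}}(\log|x|)^{k}$ that appear. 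These expansions are linearly independent over $\mathbb{C}$ by the distinctness of their leading asymptotic types $(r_{i},k)$. A direct recurrence argument, obtained by substituting a generic element of $M$ into the equation and reading off the coefficient of the lowest-order monomial, shows that the formal solution subspace of $M$ has dimension exactly three. Hence $\{\hat\varphi_{1},\hat\varphi_{2},\hat\varphi_{3}\}$ is a formal basis.

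Given now a formal power series solution $\hat y=\sum_{n\geq 0}a_{n}x^{n}$, the previous step lets me write $\hat y=c_{1}\hat\varphi_{1}+c_{2}\hat\varphi_{2}+c_{3}\hat\varphi_{3}$ for some $c_{i}\in\mathbb{C}$. Since $\hat y$ contains only integer powers of $x$ and no $\log$ terms, the coefficients of the non-integer-power and logarithmic monomials in $\sum c_{i}\hat\varphi_{i}$ must vanish; by linear independence of $\{1,\log|x|,(\log|x|)^{2}\}$ over the ring of convergent Laurent series and of the symbols $\{|x|^{r_{i}}\}$ modulo $\mathbb{Z}$-shifts, this yields a finite system of linear relations on the $c_{i}$. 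The surviving combination $c_{1}\varphi_{1}+c_{2}\varphi_{2}+c_{3}\varphi_{3}$, now read in the convergent category, equals $\hat y$ as a formal series and converges on $|x|<R$ because each $\varphi_{i}$ does, proving the theorem. The main obstacle will be justifying the dimension count and the logarithm cancellations in the exceptional cases (i)--(iv) of Theorem \ref{thmfrobb3II}, where the bookkeeping between possibly $\mathbb{Z}$-equivalent exponents and up to quadratic logarithmic factors requires care; the argument parallels that of Theorem~\ref{Theorem:B} but with more cases. Complex non-real indicial roots can occur for a real ODE (compare Example~\ref{Example:realcomplex}), which forces the $c_{i}$ to be taken in $\mathbb{C}$, but after the cancellations the resulting real formal series $\hat y$ still converges on $(-R,R)$ because the underlying complex $\varphi_{i}$ converge on $|z|<R$.
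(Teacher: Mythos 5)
Your proposal is correct and is essentially the paper's own argument: the paper proves this theorem in one line by saying it "is proved like Theorem~\ref{Theorem:B} once we have the description of the solutions given for order three by Theorems~\ref{thmfrobb3I} and~\ref{thmfrobb3II}," which is exactly your plan of expressing the formal power series solution in the Frobenius basis and forcing the logarithmic and non-integer-power terms to cancel. The dimension-count and cancellation bookkeeping you flag as the main obstacle is likewise left implicit in the paper, whose second-order template (the case analysis following the ``wronskian II'' discussion) is the model you would follow case by case.
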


\begin{Conjecture}Consider a linear homogeneous ordinary differential equation of third order given by
$$a(x)y^{\prime\prime\prime }+b(x)y^{\prime \prime} +c(x)y^\prime + d(x) y=0$$
where $a,b,c, d$ are analytic functions at $x_0 \in \mathbb R$. Suppose that this equation admits three
linearly independent formal  solutions $y_1(x), y_2(x), y_3(x)$. Then $x_0$ is an ordinary point or a regular singular point for the ODE  and therefore the solutions are analytic convergent.
\end{Conjecture}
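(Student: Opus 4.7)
The plan is to split the conjecture into two steps: (a) show that the existence of three linearly independent formal power series solutions forces $x_0$ to be an ordinary or regular singular point of the equation; then (b) invoke Theorem~\ref{Theorem:1formalregularorderthree} to conclude that each $\hat y_i$ is convergent. For (a), translate $x_0$ to the origin and divide the equation by $a(x)$ to obtain the normalized form $y'''+\alpha y''+\beta y'+\gamma y=0$, where $\alpha=b/a$, $\beta=c/a$, $\gamma=d/a$ are viewed as formal Laurent series at $0$. The regular singular condition is equivalent to the pole orders of $\alpha,\beta,\gamma$ at $0$ being at most $1,2,3$ respectively.

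The heart of the argument is an order-of-vanishing estimate. Replace $\hat y_1,\hat y_2,\hat y_3$ by linear combinations with strictly increasing orders of vanishing $n_1<n_2<n_3$ at $0$, which is always possible for a three-dimensional space of nonzero formal power series by an elementary echelon reduction. The classical Wronskian criterion in a differential field ensures that the formal Wronskian $W=W(\hat y_1,\hat y_2,\hat y_3)$ is nonzero. Cramer's rule applied to the system $\hat y_i'''+\alpha\hat y_i''+\beta\hat y_i'+\gamma\hat y_i=0$ ($i=1,2,3$) yields $\alpha=-W_\alpha/W$, $\beta=W_\beta/W$, $\gamma=-W_\gamma/W$, where $W_\alpha,W_\beta,W_\gamma$ are the $3\times 3$ determinants obtained from $W$ by replacing the columns of $\hat y_i''$, $\hat y_i'$, $\hat y_i$ respectively by $\hat y_i'''$. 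The key estimate is that for any derivative indices $0\le j_1<j_2<j_3$,
\[
\mathrm{ord}\bigl(\det(\hat y_i^{(j_k)})_{i,k}\bigr)\,\geq\, n_1+n_2+n_3-(j_1+j_2+j_3),
\]
which follows Leibniz term by Leibniz term from the inequalities $\mathrm{ord}(\hat y_i^{(j_k)})\geq n_i-j_k$. For the Wronskian itself ($\{j_k\}=\{0,1,2\}$) a direct computation of the leading coefficient, after factoring suitable powers of $x$ from rows and columns, reduces to the $3\times 3$ determinant with columns $(1,n_i,n_i(n_i-1))$; elementary column operations turn this into a classical Vandermonde in $n_1,n_2,n_3$, which is nonzero precisely because the $n_i$ are distinct, yielding the sharp equality $\mathrm{ord}(W)=n_1+n_2+n_3-3$. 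Applying the general bound to the index sets $\{0,1,3\},\{0,2,3\},\{1,2,3\}$ for $W_\alpha,W_\beta,W_\gamma$ and subtracting $\mathrm{ord}(W)$ gives
\[
\mathrm{ord}(\alpha)\geq-1,\qquad \mathrm{ord}(\beta)\geq-2,\qquad \mathrm{ord}(\gamma)\geq-3.
\]
Hence $x\alpha$, $x^2\beta$, $x^3\gamma$ are formal power series; but they are also quotients of convergent analytic germs at $0$, so these formal series coincide with honest analytic germs, which is exactly the definition of $x_0$ being an ordinary or regular singular point of the original equation.

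The main obstacle is the echelon reduction together with the nonvanishing of the leading Vandermonde determinant of $W$: both are elementary but must be verified carefully, because without $W\neq 0$ the entire Cramer-based representation of $\alpha,\beta,\gamma$ collapses. A secondary subtlety in the real case is that the echelon reduction must remain inside $\mathbb R[[x]]$, which it does because the $\hat y_i$ are real formal series to begin with, and all operations involved are $\mathbb R$-linear. With step (a) in place, step (b) is immediate from Theorem~\ref{Theorem:1formalregularorderthree}: each formal power series solution at an ordinary or regular singular point converges throughout the maximal interval on which $a,b,c,d$ are analytic, so the three $\hat y_i$ are analytic and the conjecture is established.
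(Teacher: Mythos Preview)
The statement you are addressing is labeled as a \emph{Conjecture} in the paper, and the paper does \emph{not} supply a proof of it; it is left open. Your proposal therefore is not to be compared against an existing proof but evaluated on its own merits, and it appears to be a correct and complete argument settling the conjecture.

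Your approach is genuinely different from, and more streamlined than, the route the paper takes for the analogous second-order statement (Theorem~\ref{Theorem:A}). There the authors first establish convergence via the Cerveau--Mattei theorem on formal meromorphic first integrals of integrable one-forms, and only afterwards deduce regularity through a separate Wronskian analysis (Lemma~\ref{Lemma:simplepole} and Claim~\ref{Claim:poleordertwo}); an alternative proof then proceeds by a somewhat delicate case analysis on the roots of the indicial equation. By contrast, your argument is purely elementary linear algebra in the differential ring $\mathbb C[[x]]$: the echelon reduction to distinct orders $n_1<n_2<n_3$, the exact computation $\mathrm{ord}(W)=n_1+n_2+n_3-3$ via the Vandermonde in the $n_i$, and the general lower bound $\mathrm{ord}\det(\hat y_i^{(j_k)})\ge \sum n_i-\sum j_k$ combine through Cramer's rule to pin down the pole orders of $b/a$, $c/a$, $d/a$ directly. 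This bypasses both the foliation machinery and any case analysis, works uniformly in all orders (the same argument proves the order-$n$ analogue), and reduces the convergence step to a clean citation of Theorem~\ref{Theorem:1formalregularorderthree}. The only points requiring care---that the echelon reduction always produces three distinct orders in a three-dimensional subspace of $\mathbb C[[x]]$, and that the resulting Vandermonde leading coefficient of $W$ is nonzero---you have addressed. One small remark: since the paper allows formal solutions with complex coefficients even in the real-coefficient case, the echelon reduction should simply be carried out over $\mathbb C$; the conclusion that $x\alpha,x^2\beta,x^3\gamma$ are analytic then descends to the real germs because $a,b,c,d$ are real.
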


\begin{Conjecture}
Consider a linear homogeneous ordinary differential equation of third order given by
$$a(x)y^{\prime\prime\prime}+b(x)y^{\prime\prime}+c(x)y^\prime+d(x)y=0$$
where $a,b,c, d$ are analytic functions at $x_0 \in \mathbb R$. Suppose that this equation admits three
linearly independent  solutions $y_1(x), y_2(x), y_3(x)$ which are linear combinations of log type and
real or complex power type with  analytic functions as coefficients. Then $x_0$ is an ordinary point or a regular singular point for the ODE.
\end{Conjecture}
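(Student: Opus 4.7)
The plan is to reduce the third-order problem to the second-order characterization of Theorem~\ref{Theorem:characterizationregularpointordertwo} by performing one step of reduction of order along a carefully chosen solution. Assume without loss of generality that $x_0 = 0$; if $a(0)\neq 0$ then $0$ is ordinary and nothing is to prove. Divide through by $a(x)$ to obtain the normal form $y''' + P y'' + Q y' + R y = 0$ with $P = b/a$, $Q = c/a$, $R = d/a$ meromorphic at $0$; the goal becomes showing that $xP$, $x^2 Q$ and $x^3 R$ extend analytically across $0$.

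The reduction-of-order substitution $y = y_1 u$ produces a cleanly behaved second-order equation only when the chosen $y_1$ has pure-power leading form, $y_1(x) = x^{r_1} g_1(x)$ with $g_1$ analytic and $g_1(0) \neq 0$. To extract such a solution from the three given anclop solutions I would pass to the complex analytic setting and analyze the monodromy operator $T$ on the three-dimensional complex solution space $V = \langle y_1, y_2, y_3 \rangle$. Any eigenvector $\varphi \in V$ of $T$ with eigenvalue $\lambda = e^{2\pi i r}$ writes as $\varphi(x) = x^r h(x)$ with $h$ single-valued on a punctured disc around $0$; the anclop hypothesis provides moderate growth, so $h$ is meromorphic at $0$, and after shifting $r$ by an integer we may assume $h$ is analytic with $h(0) \neq 0$. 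Relabel so that $y_1$ has this pure-power form and complete $\{y_1,y_2,y_3\}$ to a basis of $V$.

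Substituting $y = y_1 u$ and $v = u'$ in the normal form yields
\[
v'' + \Big(\tfrac{3 y_1'}{y_1} + P\Big) v' + \Big(\tfrac{3 y_1''}{y_1} + \tfrac{2 P y_1'}{y_1} + Q\Big) v = 0,
\]
since the coefficient of $u$ vanishes because $y_1$ solves the original equation. Multiplying by $a$ and factoring the $x^{r_1 - 2}$ coming from $y_1$ and its derivatives converts this to an equivalent equation $\tilde a(x) v'' + \tilde b(x) v' + \tilde c(x) v = 0$ with $\tilde a, \tilde b, \tilde c$ analytic at $0$. The functions $v_i = (y_i/y_1)'$, $i = 2, 3$, are two solutions; a relation $\alpha v_2 + \beta v_3 = 0$ integrates to $\alpha y_2 + \beta y_3 = \gamma y_1$ for some $\gamma \in \mathbb C$, which by linear independence of $\{y_1, y_2, y_3\}$ forces $\alpha = \beta = 0$. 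Since $1/y_1 = x^{-r_1}/g_1$ is itself of anclop type and the anclop/autlos class is closed under multiplication and differentiation, each $v_i$ is autlos. Theorem~\ref{Theorem:characterizationregularpointordertwo} then forces $0$ to be an ordinary or regular singular point for the reduced equation; equivalently, $P_1 := 3 y_1'/y_1 + P$ has at most a simple pole and $Q_1 := 3 y_1''/y_1 + 2 P y_1'/y_1 + Q$ at most a double pole at $0$.

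Finally, the pure-power form of $y_1$ gives $y_1^{(k)}/y_1$ a pole of order at most $k$ at $0$, so subtracting yields that $P$ has at most a simple pole and $Q$ at most a double pole. For $R$, the identity $R = -(y_1''' + P y_1'' + Q y_1')/y_1$ (obtained from the fact that $y_1$ is a solution) bounds its pole order by $\max(3, 1+2, 2+1) = 3$. This is exactly the Frobenius regularity condition at $x_0 = 0$. The main obstacle in this plan is the second step: extracting a pure-power solution from a generic triple of anclop solutions requires the monodromy eigenvector argument, forces the passage to $\mathbb C$, and must accommodate a possibly complex non-integer exponent $r_1$.
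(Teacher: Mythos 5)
The statement you were asked to prove is labelled a \emph{Conjecture} in the paper: the authors give no argument for it and explicitly defer the characterization of regular singular points of third order equations in terms of their solution space to a forthcoming work. So there is no proof of theirs to compare against; what follows is an assessment of your plan on its own terms.

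Your strategy is sound, and is essentially the classical Fuchs-type induction on the order: extract a monodromy eigenvector, which is necessarily of pure power form $x^{r_1}g_1(x)$ with $g_1(0)\neq 0$; reduce the order along it; invoke the second order characterization (Theorem~\ref{Theorem:characterizationregularpointordertwo}) for the reduced equation; and unwind the pole orders. I checked the reduction-of-order computation, the linear independence of $v_2,v_3$, and the final bookkeeping showing that $xP$, $x^2Q$ and $x^3R$ extend analytically, and these are all correct; the last condition is exactly the paper's definition of a regular singular point in order three. Three points must be made explicit to close the argument. First, meromorphy of $h=x^{-r}\varphi$ requires both single-valuedness (which the eigenvector property gives) and moderate growth; the latter has to be extracted from the hypothesis, i.e.\ one must note that functions of the form $\alpha+\beta\log x+\gamma x^{r}$ with analytic coefficients, together with their $\mathbb C$-linear combinations and analytic continuations, are $O(|x|^{-N})$ on sectors of bounded argument, so that the single-valued $h$ has at worst a pole. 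Second, when $r_1\notin\mathbb R$ the reduced equation $\tilde a v''+\tilde b v'+\tilde c v=0$ has complex, non-real analytic coefficients, so you are invoking Theorem~\ref{Theorem:characterizationregularpointordertwo} in its complex-analytic form; the paper's proof of that theorem does treat the complex case first, so this is legitimate, but it exceeds the literal statement (which posits $x_0\in\mathbb R$) and should be acknowledged. Third, the solutions $v_i=(y_i/y_1)'$ are generally not anclop in the paper's narrow sense (they involve two distinct powers, $x^{-r_1}$ and $x^{r_j-r_1}$), so you are right to route through condition (2) (autlos) rather than condition (1); the closure of the class of functions holomorphic on a common slit disc under sums, products, reciprocals of non-vanishing factors, and differentiation is doing real work there and deserves a sentence. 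With these details supplied, your plan yields a complete proof of the conjecture, which is more than the paper itself offers.
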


\section{Second order equations}

The classical second order {\it Euler equation}  is $ax^2 y^{\prime \prime} + b x y ^\prime + c y=0$ where
$a, b, c \in \mathbb R, \mathbb C$ are constants and $ a \ne 0$. The classical method of solution is to look for solutions of the form $y=x^r$ and to obtain a second degree equation on $r$ of the form $ar(r-1) +
br + c=0$, called associate {\it indicial equation} (see \cite{Boyce} Chapter~5). This is the basis of the Frobenius methods for solving non-constant coefficients suitable second order ODEs as it is well-known (\cite{Boyce}, \cite{C}). In this part of our work we push further these techniques by looking at the
convergence of formal solutions, characterization of the regular singularity case and complete description of the cases where there are solutions of Liouvillian type. We also associate to  a polynomial second order linear homogeneous complex ODE, a Riccati first order ODE and consequently a group of Moebius maps, called the
{\it global holonomy} of the ODE. The case where the group is trivial corresponds to an important class of ODEs which can be explicitly integrated.

\subsection{Second order Euler equations}

Let us first give a characterization of Euler equations in the complex plane, in terms of
regularity of its singular points.

\begin{Theorem}\label{chareulerequation}{\rm Consider a second order differential equation
\begin{equation}\label{chareuler1}
z^2u^{\prime\prime}+zb(z)u^\prime+c(z)u=0
\end{equation}
where $b,c$ are entire functions in the complex plane. Then (\ref{chareuler1}) is an  Euler equation if, and only if,  the origin and the infinity are regular singular points of (\ref{chareuler1}).}
\end{Theorem}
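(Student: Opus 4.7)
\medskip

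The plan is to prove the two directions separately, with the nontrivial content lying at the point at infinity.

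\textbf{Forward direction (Euler implies regular singular at $0$ and $\infty$).} If $b(z)\equiv b_0$ and $c(z)\equiv c_0$ are constants, the equation reads $z^2 u''+ b_0 z u'+c_0 u=0$, which is manifestly of the regular singular form at the origin. To check the point at infinity, I would introduce the standard chart $w=1/z$ and set $v(w)=u(1/w)$. The chain rule gives $u'(z)=-w^2 v'(w)$ and $u''(z)=w^4 v''(w)+2w^3 v'(w)$, so after substitution and division by $w^2$ the equation becomes
\[
w^2 v''+w\bigl(2-b_0\bigr)v'+c_0\,v=0,
\]
which is itself an Euler equation at $w=0$, hence a regular singular point (or ordinary, in the degenerate case).

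\textbf{Backward direction (regular singular at both implies Euler).} At the origin there is nothing to force, since the hypothesis that $b,c$ are entire already makes the equation either ordinary or regular singular at $z=0$. The real content is at infinity. Using the same substitution $w=1/z$ as above, the equation transforms into
\[
v''+\frac{2-b(1/w)}{w}\,v'+\frac{c(1/w)}{w^2}\,v=0.
\]
By the definition of regular singularity at $w=0$ recalled in the introduction, this requires $2-b(1/w)$ and $c(1/w)$ to extend to holomorphic functions in a neighborhood of $w=0$. Now I use the entireness of $b$ and $c$: writing $b(z)=\sum_{n\ge 0} b_n z^n$ and $c(z)=\sum_{n\ge 0} c_n z^n$, we have $b(1/w)=\sum_{n\ge 0} b_n w^{-n}$ and $c(1/w)=\sum_{n\ge 0} c_n w^{-n}$, and these Laurent series extend holomorphically across $w=0$ if and only if the negative part vanishes, i.e., $b_n=0$ and $c_n=0$ for every $n\ge 1$. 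Hence $b$ and $c$ are constants and (\ref{chareuler1}) is an Euler equation.

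\textbf{Main obstacle.} There is no deep obstacle; the argument reduces to the coordinate change $w=1/z$ together with the observation that an entire function restricted to $1/w$ is holomorphic at $w=0$ only when it is constant. The single point requiring care is bookkeeping of the chain rule in the substitution, to make sure the transformed equation is written in the normalized form $v''+\alpha(w)v'+\beta(w)v=0$ so that the regularity criterion $w\alpha,\,w^2\beta$ analytic at $w=0$ can be applied directly.
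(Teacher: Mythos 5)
Your argument is correct and follows essentially the same route as the paper: the change of variable $w=1/z$, the computation of the transformed equation $w^2v''+w\bigl(2-b(1/w)\bigr)v'+c(1/w)v=0$, and the observation that regularity at $w=0$ forces the entire functions $b,c$ to have vanishing positive-degree Taylor coefficients, hence to be constant. The only cosmetic difference is that you normalize by dividing through by $w^2$ before applying the regularity criterion, whereas the paper phrases the same condition as the existence of the limits of $t\cdot t[2-b(1/t)]/t^2$ and $t^2c(1/t)/t^2$.
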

\begin{proof}{\rm It is a straightforward computation to check that an Euler equation has the origin and the infinity as singular points. Let us see the converse. Putting  $z=1/t$. Let $w(t)=u\big(\frac{1}{t}\big)$
$$u^\prime\big(\frac{1}{t}\big)=-t^2w^\prime(t), \, \,
u^{\prime\prime}\big(\frac{1}{t}\big)=t^4w^{\prime\prime}(t)+2t^3w^\prime(t),$$
hence
$$\frac{1}{t^2}u^{\prime\prime}\big(\frac{1}{t}\big)+\frac{1}{t}b\big(\frac{1}{t}\big)u^\prime\big(\frac{1}{t}\big)+c\big(\frac{1}{t}\big)u\big(\frac{1}{t}\big)=0$$
$$\frac{1}{t^2}[t^4w^{\prime\prime}(t)+2t^3w^\prime(t)]+\frac{1}{t}b\big(\frac{1}{t}\big)[-t^2w^\prime(t)]+c\big(\frac{1}{t}\big)w(t)=0$$
\begin{equation}\label{chareuler2}
t^2w^{\prime\prime}(t)+t\big[2-b\big(\frac{1}{t}\big)\big]w^\prime(t)+c\big(\frac{1}{t}\big)w(t)=0.
\end{equation}
Given that the infinity is a regular singular point of (\ref{chareuler1}) we have that $t=0$ is a regular singular point of (\ref{chareuler2}) consequently there exist the limits

$$\displaystyle\lim_{t\to 0}\frac{t^2\big[2-b\big(\frac{1}{t}\big)\big]}{t^2}=\displaystyle\lim_{t\to 0}\big[2-b\big(\frac{1}{t}\big)\big]=\displaystyle\lim_{t\to 0}\big[(2-b_0)-\frac{b_1}{t}+\frac{b_2}{t^2}+\ldots\big]$$
 and
$$\displaystyle\lim_{t\to 0}\frac{t^2c\big(\frac{1}{t}\big)}{t^2}=\displaystyle\lim_{t\to 0}c\big(\frac{1}{t}\big)=\displaystyle\lim_{t\to 0}\big[c_0+\frac{c_1}{t}+\frac{c_2}{t^2}+\ldots\big].$$
Given that these limits exist we have that $0=b_1=b_2=\ldots$ and $0=c_1=c_2=\ldots$. Hence equation (\ref{chareuler1}) is of the form
$$z^2u^{\prime\prime}+zb_0u^\prime+c_0u=0$$
  which is an  Euler equation.}
\end{proof}

A far more general statement is found below:

\begin{Theorem}\label{charsing}{\rm Consider a second order differential equation
\begin{equation}\label{charsing1}
a(z)u^{\prime\prime}+b(z)u^\prime+c(z)u=0
\end{equation}
where $a,b,c$ are entire functions in the complex plane with $a^\prime(0)\neq0$. The origin and the infinity are regular singular points of (\ref{charsing1}) if and only if there exist  $k=0,1,2,\ldots$ in such a way that equation (\ref{charsing1}) is of the form
\begin{equation}\label{charsing2}
(A_1z+\ldots+A_{k+2}z^{k+2})u^{\prime\prime}+(B_0+B_1z+\ldots+B_{k+1}z^{k+1})u^\prime+(C_0+C_1z+\ldots+C_{k}z^{k})u=0
\end{equation}
where $A_1,\ldots,A_{k+2},B_0,\ldots,B_{k+1},C_0,\ldots,C_{k}$ are constants such that $A_1,A_{k+2}\neq0$.
.}
\end{Theorem}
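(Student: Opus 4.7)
The plan is to treat the two implications separately; the backward direction is a routine verification, while the forward direction carries the substance.

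For sufficiency ($\Leftarrow$), assume the equation is already in the polynomial form (\ref{charsing2}). Writing $a(z) = z\tilde a(z)$ with $\tilde a(z) = A_1 + A_2 z + \cdots + A_{k+2} z^{k+1}$, one has $\tilde a(0) = A_1 \neq 0$, so the limits $\lim_{z\to 0} z b(z)/a(z) = B_0/A_1$ and $\lim_{z\to 0} z^2 c(z)/a(z) = 0$ exist and are finite, hence the origin is a regular singular point. For infinity, the substitution $z=1/t$ used in the proof of Theorem~\ref{chareulerequation} converts the equation into one whose coefficients are rational in $t$; the degree bounds $\deg b \leq k+1$ and $\deg c \leq k$ built into (\ref{charsing2}) are precisely what is needed to guarantee that the normalized coefficients $tB/A$ and $t^2C/A$ extend analytically to $t=0$.

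For necessity ($\Rightarrow$), I would proceed in three stages. First, since the origin is singular and $a'(0)\neq 0$, we must have $a(0)=0$ and hence $a(z)=z\tilde a(z)$ with $\tilde a(0)=A_1\neq 0$; the regular singular condition at the origin is then automatic. Second, I apply the change of variable $z=1/t$ exactly as in Theorem~\ref{chareulerequation}; the hypothesis that $t=0$ is a regular singular point of the transformed equation translates, through the identities $tB/A = 2 - b(1/t)/\tilde a(1/t)$ and $t^2 C/A = c(1/t)/(t\tilde a(1/t))$, into the two conditions that $b(z)/\tilde a(z)$ and $z\, c(z)/\tilde a(z)$ each admit an analytic extension at $z=\infty$. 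Third, from these I conclude that $a$, $b$, $c$ are polynomials and read off the degree inequalities $\deg b \leq \deg\tilde a = \deg a - 1$ and $\deg c \leq \deg\tilde a - 1 = \deg a - 2$; setting $\deg a = k+2$ with $k\geq 0$ then produces the form (\ref{charsing2}), with $A_1\neq 0$ by $a'(0)\neq 0$ and $A_{k+2}\neq 0$ by maximality of the degree.

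The main obstacle is the third stage: passing from analytic extension at $\infty$ to polynomiality of $a$, $b$, $c$. The observation to exploit is that $b/\tilde a$ is a ratio of entire functions, hence meromorphic on $\mathbb C$; since it also admits an analytic extension to a neighborhood of $\infty$, it is bounded near $\infty$ and its poles are confined to a bounded region of $\mathbb C$, so there are only finitely many of them. Therefore $b/\tilde a$ is meromorphic on $\mathbb P^1$ and thus rational, and an analogous argument applied to $zc/\tilde a$ shows that $c/\tilde a$ is rational as well. Finally, in order for the substituted coefficients $a(1/t)$, $b(1/t)$, $c(1/t)$ to be meromorphic rather than essentially singular at $t=0$, which is implicit in the notion of the equation having a regular singular point at $\infty$, each of $a$, $b$, $c$ must in fact be a polynomial in $z$; the claimed degree inequalities then follow from the finiteness of the limits at infinity, producing the form (\ref{charsing2}).
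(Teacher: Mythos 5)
Your overall route is the same as the paper's: both implications are handled by the substitution $z=1/t$, the transformed equation is exactly the paper's (\ref{charsing4}), regularity at infinity is read off as finiteness of the limits of $2-\frac{b(1/t)}{t\,a(1/t)}$ and $\frac{c(1/t)}{t^{2}a(1/t)}$ as $t\to 0$, and the sufficiency direction is the same pair of limit computations at $0$ and at $t=0$. Your third stage --- upgrading $b/\tilde a$ and $z\,c/\tilde a$ to rational functions by observing that they are meromorphic on $\mathbb C$ and holomorphic near $\infty$, hence meromorphic on $\mathbb P^{1}$ --- is an honest attempt to justify a passage that the paper simply asserts: the paper writes the two limits as quotients of series in $1/t$ and declares that all coefficients beyond a certain index must vanish, without explaining why no cancellation between numerator and denominator can occur.

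The difficulty is that your fourth stage does not close, and it is precisely the point where the paper's own argument is thin. Rationality of the ratios $b/\tilde a$ and $z\,c/\tilde a$ does not force $a$, $b$, $c$ to be polynomials individually, and your justification --- that meromorphy of $a(1/t)$, $b(1/t)$, $c(1/t)$ at $t=0$ is ``implicit in the notion'' of a regular singular point at infinity --- is not supported by the definition in force here, which constrains only the normalized ratios $zb(z)/a(z)$ and $z^{2}c(z)/a(z)$ (equivalently, the coefficients of the transformed equation after dividing by its leading term). Concretely, $a(z)=ze^{z}$, $b(z)=e^{z}$, $c(z)=0$ satisfies $a'(0)\neq 0$ and all four limit conditions at $0$ and at $\infty$, yet none of the coefficients is a polynomial: a common transcendental factor is invisible to every ratio you control. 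To make the step valid you must either assume the coefficients have no common entire factor (or are polynomials to begin with), or weaken the conclusion to ``the equation can be rewritten in the form (\ref{charsing2}) after dividing by a unit''; even then the degenerate case $\deg a=1$ (e.g.\ $zu''+u'=0$, for which $c\equiv 0$) escapes the stated normal form, since it requires $\deg a=k+2\geq 2$. So you have faithfully reproduced the paper's argument, improved one step, and inherited its genuine gap at the final one.
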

\begin{proof}{\rm Let us first see that (\ref{charsing2}) has the origin and the infinity as regular singular point. Clearly the origin é singular point of (\ref{charsing2}) and since there exist the limits

$$\displaystyle\lim_{z\to 0}\frac{z(B_0+B_1z+\ldots+B_{k+1}z^{k+1})}{(A_1z+\ldots+A_{k+2}z^{k+2})}=\displaystyle\lim_{z\to 0}\frac{B_0+B_1z+\ldots+B_{k+1}z^{k+1}}{(A_1+\ldots+A_{k+2}z^{k+1})}=\frac{B_0}{A_1}$$
 and
$$\displaystyle\lim_{z\to 0}\frac{z^2(C_0+C_1z+\ldots+C_{k}z^{k})}{A_1z+\ldots+A_{k+2}z^{k+2}}=\displaystyle\lim_{z\to 0}\frac{z(C_0+C_1z+\ldots+C_{k}z^{k})}{A_1+\ldots+A_{k+2}z^{k+1}}=0$$we have that the origin is a regular singular point. Putting  $z=1/t$. Let $w(t)=u\big(\frac{1}{t}\big)$
$$u^\prime\big(\frac{1}{t}\big)=-t^2w^\prime(t), \, \, u^{\prime\prime}\big(\frac{1}{t}\big)=t^4w^{\prime\prime}(t)+2t^3w^\prime(t),$$
hence
$$\big(\frac{A_1}{t}+\ldots+\frac{A_{k+2}}{t^{k+2}}\big)u^{\prime\prime}\big(\frac{1}{t}\big)+\big(B_0+\frac{B_1}{t}+\ldots+\frac{B_{k+1}}{t^{k+1}}\big)u^\prime\big(\frac{1}{t}\big)+\big(C_0+\frac{C_1}{t}+\ldots+\frac{C_{k}}{t^{k}}\big)u\big(\frac{1}{t}\big)=0$$

$$\begin{array}{c} \big(\frac{A_1}{t}+\ldots+\frac{A_{k+2}}{t^{k+2}}\big)[t^4w^{\prime\prime}(t)+2t^3w^\prime(t)]+\big(B_0+\frac{B_1}{t}+\ldots+\frac{B_{k+1}}{t^{k+1}}\big)[-t^2w^\prime(t)]\\
\\+\big(C_0+\frac{C_1}{t}+\ldots+\frac{C_{k}}{t^{k}}\big)w(t)=0\end{array}$$
\begin{equation}\label{charsing3}
\begin{array}{c}
\big(A_1t^{k+3}+\ldots+A_{k+2}t^2\big)w^{\prime\prime}(t)+\big((2A_1-B_0)t^{k+2}+\ldots+(2A_{k+2}-B_{k+1})t\big)w^\prime(t)\\
\\+\big(C_0t^k+\ldots+C_{k}\big)w(t)=0.\end{array}
\end{equation}
Observe that the origin is a singular point of (\ref{charsing3}) and since there exist the limits
$$\begin{array}{c}\displaystyle\lim_{t\to 0}\frac{t\big((2A_1-B_0)t^{k+2}+\ldots+(2A_{k+2}-B_{k+1})t\big)}{A_1t^{k+3}+\ldots+A_{k+2}t^2}\\
\\=\displaystyle\lim_{t\to 0}\frac{(2A_1-B_0)t^{k+1}+\ldots+(2A_{k+2}-B_{k+1})}{A_1t^{k+1}+\ldots+A_{k+2} }=\frac{2A_{k+2}-B_{k+1}}{A_{k+2}}\end{array}$$
 and
$$\displaystyle\lim_{t\to 0}\frac{t^2\big(C_0t^k+\ldots+C_{k}\big)}{A_1t^{k+3}+\ldots+A_{k+2}t^2}=\displaystyle\lim_{t\to 0}\frac{C_0t^k+\ldots+C_{k}}{A_1t^{k+1}+\ldots+A_{k+2}}=\frac{C_k}{A_{k+2}}$$we have that o the origin is a regular singular point of (\ref{charsing3}) consequently the infinity is a regular singular point of (\ref{charsing2}). Conversely, assume that the origin and the infinity are regular singular points of (\ref{charsing1}). Hence by the change of coordinates $z=1/t$ and considering $v(t)=u\big(\frac{1}{t}\big)$
$$u^\prime\big(\frac{1}{t}\big)=-t^2v^\prime(t), \, \, u^{\prime\prime}\big(\frac{1}{t}\big)=t^4v^{\prime\prime}(t)+2t^3v^\prime(t),$$
hence
$$a\big(\frac{1}{t}\big)u^{\prime\prime}\big(\frac{1}{t}\big)+b\big(\frac{1}{t}\big)u^\prime\big(\frac{1}{t}\big)+c\big(\frac{1}{t}\big)u\big(\frac{1}{t}\big)=0$$
$$a\big(\frac{1}{t}\big)\big[t^4v^{\prime\prime}(t)+2t^3v^\prime(t)\big]+b\big(\frac{1}{t}\big)\big[-t^2v^\prime(t)\big]+c\big(\frac{1}{t}\big)u\big(\frac{1}{t}\big)=0$$
\begin{equation}\label{charsing4}
t^4a\big(\frac{1}{t}\big)w^{\prime\prime}(t)+\big[2t^3a\big(\frac{1}{t}\big)-t^2b\big(\frac{1}{t}\big)\big]w^\prime(t)+c\big(\frac{1}{t}\big)w(t)=0.
\end{equation}
Given that the infinity is a regular singular point of (\ref{charsing1}) then the origin is a regular singular point of (\ref{charsing4}). Hence, there exist the limits

$$\displaystyle\lim_{t\to 0}\frac{t\big[2t^3a\big(\frac{1}{t}\big)-t^2b\big(\frac{1}{t}\big)\big]}{t^4a\big(\frac{1}{t}\big)}=\displaystyle\lim_{t\to 0}\big[2-\frac{b\big(\frac{1}{t}\big)}{ta\big(\frac{1}{t}\big)}\big]=2-\lim_{t\to 0}\frac{b_0+\frac{b_1}{t}+\frac{b_2}{t^2}+\ldots}{a_1+\frac{a_2}{t}+\frac{a_3}{t^2}+\ldots}$$
 and
$$\displaystyle\lim_{t\to 0}\frac{t^2c\big(\frac{1}{t}\big)}{t^4a\big(\frac{1}{t}\big)}=\displaystyle\lim_{t\to 0}\frac{c\big(\frac{1}{t}\big)}{t^2a\big(\frac{1}{t}\big)}=\displaystyle\lim_{t\to 0}\frac{c_0+\frac{c_1}{t}+\frac{c_2}{t^2}+\ldots}{a_1t+a_2+\frac{a_3}{t}+\ldots}$$
hence we have there exists  $k=0,1,2\ldots$ in such a way that $a_{k+2}\neq0$, $0=a_{k+3}=a_{k+4}=\ldots$, $0=b_{k+1}=b_{k+2}=\ldots$ and $0=c_{k+1}=c_{k+2}=\ldots$. Given that the origin is a regular singular point of (\ref{charsing1}) we have that there exist the limits
$$\displaystyle\lim_{z\to 0}\frac{zb(z)}{a(z)}=\displaystyle\lim_{z\to 0}\frac{b_0z+b_1z^2+\ldots+b_{k+1}z^{k+2}}{a_1z+a_2z^2+\ldots+a_{k+2}z^{k+2}}=\displaystyle\lim_{z\to 0}\frac{b_0+b_1z+\ldots+b_{k+1}z^{k+1}}{a_1+a_2z+\ldots+a_{k+2}z^{k+1}}$$
 and
$$\displaystyle\lim_{z\to 0}\frac{z^2c(z)}{a(z)}=\displaystyle\lim_{z\to 0}\frac{c_0z^2+c_1z^3+\ldots+c_{k}z^{k+2}}{a_1z+a_2z^2+\ldots+a_{k+2}z^{k+2}}=\displaystyle\lim_{z\to 0}\frac{c_0z+c_1z^2+\ldots+c_{k}z^{k+1}}{a_1+a_2z+\ldots+a_{k+2}z^{k+1}}$$
 provided that  $a_1\neq0$. Thence (\ref{charsing1}) is of the form (\ref{charsing2}).
}
\end{proof}

\subsection{Convergence of formal solutions}

Consider a second order ordinary differential equation given by
$$a(z)u^{\prime\prime}+b(z)u^\prime+c(z)u=0$$
where $a,b,c$ are holomorphic in a neighborhood of the origin $0\in \mathbb C$.
We shall mainly address two questions:

\noindent Question (i): {\sl Under what conditions can we assure that the origin is an ordinary point or a
regular singular point of the equation?}
\vglue.1in
\noindent Question (ii): {\sl Is it that a formal solution of the ODE is always convergent?}

\begin{Theorem}[formal solutions order two]
\label{Theorem:twoformalsolutionsconverge}
Consider a second order ordinary differential equation given by
\begin{equation}\label{twoformal}
a(x)y^{\prime\prime}+b(x)y^\prime+c(x)y=0
\end{equation}
where $a,b,c$ are real or complex  analytic functions at $x_0 \in \mathbb R, \mathbb C$.
Suppose also that there exist two linearly independent formal
solutions  $\hat y_1(x)$ and $\hat y_2(x)$ centered at $x_0$  of  equation (\ref{twoformal}).
Then $x_0$ is an ordinary point or a regular singular point of
 (\ref{twoformal}).
Moreover, $\hat y_1(x)$ and $\hat y_2(x)$ are convergent.
\end{Theorem}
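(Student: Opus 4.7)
\emph{Proof plan.} The plan is to proceed in two phases: first, deduce that $0$ (after translating $x_0$ to the origin) is necessarily an ordinary or regular singular point purely from the existence of two linearly independent formal solutions; then invoke Theorem~\ref{Theorem:B} to conclude convergence of $\hat y_1, \hat y_2$. I will work over $\mathbb{C}$, reducing the real case to the complex one by complexifying the coefficients on a disc. If $a(0)\neq 0$ there is nothing to do in the first phase, and the classical Cauchy theorem applied to $y''=-(b/a)y'-(c/a)y$ directly produces convergent power series solutions matching any prescribed initial data, hence the formal $\hat y_i$ are automatically convergent. So I will assume $a(0)=0$ from here on.

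The heart of the first phase will be the formal Wronskian $\hat W := \hat y_1\hat y_2' - \hat y_2\hat y_1' \in \mathbb{C}[[x]]$. Since $(\hat y_2/\hat y_1)' = \hat W/\hat y_1^{2}$ in the fraction field $\mathbb{C}((x))$, the vanishing of $\hat W$ would make $\hat y_2$ a constant multiple of $\hat y_1$, contradicting linear independence; hence $\hat W\not\equiv 0$. A direct calculation from the equations for $\hat y_1$ and $\hat y_2$ yields Abel's identity $a\hat W' + b\hat W = 0$, whence $b/a = -\hat W'/\hat W$ in $\mathbb{C}((x))$. Writing $\hat W = x^{k}\hat\omega$ with $\hat\omega(0)\neq 0$ shows $\hat W'/\hat W = k/x + \hat\omega'/\hat\omega$, which has at most a simple pole at $0$; since $a,b$ are analytic, this forces $xb/a$ to extend analytically across $0$. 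For the companion condition, I will write $\hat y_1 = x^{m}\hat\rho_1$ with $\hat\rho_1(0)\neq 0$, compute the formal quotients
\[
\frac{\hat y_1'}{\hat y_1} = \frac{m}{x} + \frac{\hat\rho_1'}{\hat\rho_1}, \qquad \frac{\hat y_1''}{\hat y_1} = \frac{m(m-1)}{x^{2}} + \frac{2m}{x}\cdot\frac{\hat\rho_1'}{\hat\rho_1} + \frac{\hat\rho_1''}{\hat\rho_1},
\]
and divide the equation for $\hat y_1$ by $a\hat y_1$ to get $c/a = -\hat y_1''/\hat y_1 - (b/a)(\hat y_1'/\hat y_1)$, whose right-hand side has order $\geq -2$ at $0$. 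Therefore $x^{2}c/a$ extends analytically to $0$, and $0$ is a regular singular point of (\ref{twoformal}).

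With regularity in hand, Theorem~\ref{Theorem:B} applied to each of $\hat y_1,\hat y_2$ finishes the proof. The main obstacle will be the bridge between formal algebra and analytic conclusion: one must justify that the order estimates obtained in $\mathbb{C}((x))$ for the formal quotients $\hat W'/\hat W$, $\hat y_1'/\hat y_1$ and $\hat y_1''/\hat y_1$ translate into honest analyticity statements for $xb/a$ and $x^{2}c/a$. This is legitimate because $a,b,c$ are analytic, so their Taylor expansions coincide with the formal objects appearing in the identities above, and matching coefficients forces the Laurent expansions of $b/a$ and $c/a$ at $0$ to agree term by term with the formal right-hand sides. The argument is thus modular: the regularity step is purely formal/algebraic, while the convergence step is black-boxed to Theorem~\ref{Theorem:B}.
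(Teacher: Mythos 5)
Your Phase 1 (regularity) is correct and is essentially a cleaner version of what the paper does: the paper also derives $a\hat W'+b\hat W=0$ for the formal Wronskian and concludes that $b/a$ has at most a simple pole; for the bound on $c/a$ the paper argues by contradiction (if $c/a$ had a pole of order $\geq 3$, the recursion would force every coefficient of a formal solution to vanish), whereas your direct order computation $c/a=-\hat y_1''/\hat y_1-(b/a)(\hat y_1'/\hat y_1)$ in $\mathbb C((x))$, using $\hat y_1=x^m\hat\rho_1$ with $\hat\rho_1(0)\neq0$, gets the same conclusion more transparently. The bridge from formal Laurent identities to actual pole orders of the meromorphic functions $b/a$ and $c/a$ is legitimate for exactly the reason you state.

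The genuine gap is in Phase 2: you black-box the convergence to Theorem~\ref{Theorem:B}, but in this paper Theorem~\ref{Theorem:B} is itself proved \emph{using} the convergence part of Theorem~\ref{Theorem:twoformalsolutionsconverge} (in the cases where the given formal solution is linearly independent from the Frobenius solution, the paper's proof of Theorem~\ref{Theorem:B} says ``we have two linearly independent formal solutions'' and then cites the present theorem). The authors flag this explicitly: that proof ``cannot be used to give an alternative proof of Theorem~\ref{Theorem:twoformalsolutionsconverge}.'' So as written your argument is circular. To close the loop you must supply an independent convergence argument. Two options are available: (a) the paper's primary route, which is entirely different --- it encodes the two formal solutions into a purely formal meromorphic first integral $H=\frac{x\hat u_1'-y\hat u_1}{x\hat u_2'-y\hat u_2}$ of an integrable holomorphic $1$-form and invokes the Cerveau--Mattei convergence theorem, then solves a first-order ODE to recover $\hat u_2$; or (b) the paper's ``alternative proof,'' which, after the regularity step you already have, observes that each formal solution has order $m\in\mathbb Z_{\geq0}$ equal to a root of the indicial equation and then runs a case analysis on the disposition of the two indicial roots (equal, differing by a positive integer, one negative, presence of $\log$ terms) against the classical Frobenius basis to conclude that two linearly independent formal power series solutions can only occur when both are the convergent Frobenius series. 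Either of these, or the standard majorant estimate on the recursion $q(n+r)d_n=-e_n$, would make your modular plan sound; citing Theorem~\ref{Theorem:B} alone does not.
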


Let us give a first proof of the convergence:

\begin{proof}[Convergence in Theorem~\ref{Theorem:twoformalsolutionsconverge}]
First we consider the complex analytic case, i.e., $z$ is a complex variable and the coefficients
$a(z), b(z), c(z)$ are  complex analytic (holomorphic) functions in neighborhood $|z-z_0|<R$ of $z_0\in \mathbb C$.
   According to \cite{Reis} there is an integrable complex analytic one-form $\Omega$ in $\mathbb C^3$
 defined as follows $$\Omega=-a(z)ydx + a(z)xdy +[a(z)y^2+b(z)xy +c(z)x^2]dz.$$
 This one-form is tangent to the vector field in $\mathbb C^3$  associated to the reduction of order of the ODE.
 Moreover, given two solutions $u_1(z)$ and $u_2(z)$ of the ODE the function
 $H=\frac{xu_1^\prime(z)-yu_1(z)}{xu_2^\prime(z)-yu_2(z)}$ is a first integral for the form $\Omega$,
 ie., $dH\wedge \Omega=0$.

 By hypothesis there exist two linearly independent formal solutions $\hat u_1$ and $\hat u_2$ of  equation (\ref{twoformal}).
 Each solution writes as a formal complex power series $\hat u_j(z)=\sum\limits_{n=0}^\infty a_n ^j z^n\in \mathbb
 C\{\{z\}\}$.
According to the above,  there exists a first integral purely formal
$$H=\frac{x\hat u_1^\prime(z)-y\hat u_1(z)}{x\hat u_2^\prime(z)-y\hat u_2(z)}$$
of the integrable one-form $\Omega$ above.

Now we recall the following convergence theorem:
\begin{Theorem}[Cerveau-Mattei, \cite{Ce-Mt}, Theorem 1.1 page 106]\label{intformalpure}
 Let $\Omega$ be a germ at  $0\in\mathbb{C}^n$ of an integrable holomorphic
 1-form  and $H=\frac{f}{g}\in \hat{\mathcal{M}}_n$ a
 purely formal meromorphic first integral  of $\Omega$, i.e.,
 $\Omega\wedge dH=0$ and $H,1/H\notin\hat{\mathcal{O}}_n$. Then $H$ converges, i.e.,  $H\in\mathcal{M}_n$.
\end{Theorem}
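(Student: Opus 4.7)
The plan is to follow the strategy pioneered by Mattei--Moussu and extended by Cerveau--Mattei, combining resolution of singularities with the rigidity of reduced singularities carrying a formal first integral. I would proceed in four main steps.

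First, I would reduce the problem to dimension two. Restricting $\Omega$ to a generic 2-plane $\Pi$ through the origin yields an integrable holomorphic 1-form on $(\Pi,0)\simeq(\mathbb{C}^2,0)$, and the restriction $H|_\Pi$ remains a purely formal meromorphic first integral (non-triviality is preserved for generic $\Pi$ because $H,1/H\notin\hat{\mathcal{O}}_n$). Since a formal power series in $n$ variables is convergent once its restrictions to every 2-plane of a generic family are convergent (via Artin-type approximation or a Hartogs slicing argument), it suffices to settle the case $n=2$.

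Second, in dimension two I would invoke Seidenberg's resolution of singularities: a finite sequence of point blow-ups $\pi:\tilde U\to U$ transforms the foliation $\mathcal{F}$ defined by $\Omega$ into a foliation $\tilde{\mathcal{F}}$ whose singularities along the exceptional divisor $E=\pi^{-1}(0)$ are all reduced. The pullback $\tilde H = H\circ\pi$ is a formal meromorphic first integral of $\tilde{\mathcal{F}}$ on a formal neighborhood of $E$. Third, I would analyze each reduced singular point $p\in E$. The presence of a non-trivial formal first integral rules out saddle-nodes (whose Martinet--Ramis/Dulac formal invariants obstruct such first integrals) and forces any remaining non-degenerate singularity to have eigenvalue ratio in $\mathbb{Q}_{<0}$. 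The classical Mattei--Moussu theorem for resonant reduced singularities then produces a genuine local holomorphic first integral $h_p$ at $p$, whose Taylor series matches $\tilde H$ up to composition with a formal germ in one variable.

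Finally, I would glue. Along the smooth part of $E$, the foliation $\tilde{\mathcal{F}}$ is locally a product, so local first integrals exist trivially; combined with the $h_p$ at the singular points and using $\tilde H$ as the patching datum, one constructs a holomorphic meromorphic first integral on a neighborhood of $E$. Pushing forward by $\pi$ (using properness and Grauert's direct image theorem, or a direct Hartogs extension, together with the connectedness of $E$) yields a convergent meromorphic $H\in\mathcal{M}_2$ whose formal expansion is the original first integral.

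The main obstacle is the gluing step. Each local first integral $h_p$ is determined only up to composition with a germ of biholomorphism of $(\mathbb{C},0)$, and the compatible patching across the components of $E$ must be realized analytically rather than merely formally. Concretely, one must show that the holonomy pseudogroup of $\tilde{\mathcal{F}}$ along each component of $E$ admits an analytic common fixed function lifting $\tilde H$; this is where formal data must be upgraded to analytic data, and is typically handled by a careful application of Artin's approximation theorem together with the density properties of the holonomy groups of reduced resonant singularities.
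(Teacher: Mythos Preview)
The paper does not prove this theorem. It is quoted verbatim from the literature (Cerveau--Mattei, \emph{Ast\'erisque} 97, Theorem~1.1, p.~106) and used as a black box inside the first proof of Theorem~\ref{Theorem:twoformalsolutionsconverge}. So there is no ``paper's own proof'' to compare against: the authors simply invoke the result to conclude that the formal first integral $H=\dfrac{x\hat u_1'(z)-y\hat u_1(z)}{x\hat u_2'(z)-y\hat u_2(z)}$ of the one-form $\Omega$ must converge.

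Your outline is, in broad strokes, the strategy actually used in Cerveau--Mattei (and its predecessor Mattei--Moussu): reduce to a surface, resolve via Seidenberg, show that a formal first integral forces each reduced singularity to be non-degenerate with negative rational eigenvalue ratio (excluding saddle-nodes), produce local analytic first integrals, and glue along the exceptional divisor. Two points deserve care if you intend to make this rigorous. First, the reduction from $\mathbb{C}^n$ to $\mathbb{C}^2$ is more delicate than a Hartogs slicing: convergence of restrictions to generic $2$-planes does not by itself imply convergence of a formal series, so one needs the full mechanism in Cerveau--Mattei (or Malgrange's \emph{Frobenius avec singularit\'es}) rather than a bare Artin-approximation claim. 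Second, the statement concerns a \emph{meromorphic} first integral $H=f/g$ with $H,1/H\notin\hat{\mathcal O}_n$; the classical Mattei--Moussu argument is written for holomorphic first integrals, and Cerveau--Mattei's contribution is precisely to handle the poles of $H$ and the interaction between the zero divisors of $f$ and $g$ during the resolution and gluing---your sketch does not address this distinction.
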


From the above theorem  there exist $f,g\in\mathcal{O}_3$ such that $H=\frac{f}{g}$.

Putting $x=0$ and $y=y_0$ small enough and non-zero we have $\frac{\hat u_1(z)}{\hat u_2(z)}=\frac{f_1(z)}{g_1(z)}.$ Also, putting $y=0$ and $x=x_0$ small enough and non-zero we have $\frac{\hat u_1^\prime(z)}{\hat u_2^\prime(z)}=\frac{f_2(z)}{g_2(z)}$. Hence we have
\begin{equation}\label{intform1}
\hat u_1=\frac{f_1}{g_1}\hat u_2
\end{equation}
and
\begin{equation}\label{intform2}
\hat u_1^\prime=\frac{f_2}{g_2}\hat u_2^\prime.
\end{equation}
Derivating (\ref{intform1}) we have
\begin{equation}\label{intform3}
\hat u_1^\prime=\big(\frac{f_1}{g_1}\big)^\prime\hat u_2+\frac{f_1}{g_1}\hat u_2^\prime.
\end{equation}
Replacing (\ref{intform2}) in (\ref{intform3}) we have
$$\frac{f_2}{g_2}\hat u_2^\prime=\big(\frac{f_1}{g_1}\big)^\prime \hat u_2+\frac{f_1}{g_1}\hat u_2^\prime$$
\begin{equation}\label{intform4}
\big(\frac{f_2}{g_2}-\frac{f_1}{g_1}\big) \hat u_2^\prime=\big(\frac{f_1}{g_1}\big)^\prime \hat u_2.
\end{equation}
Observe that $$\frac{f_2}{g_2}-\frac{f_1}{g_1}\neq0$$ since $\hat u_1$ and $\hat u_2$ are  linearly independent.

Therefore from (\ref{intform4}) we obtain
$$\hat u_2^\prime=\frac{\big(\frac{f_1}{g_1}\big)^\prime}{\big(\frac{f_2}{g_2}-\frac{f_1}{g_1}\big)}u_2 $$
thus $$\hat u_2(z)=\exp\big(\dint^z_{z_0}\frac{\big(\frac{f_1(w)}{g_1(w)}\big)^\prime}{\big(\frac{f_2(w)}{g_2(w)}-\frac{f_1(w)}{g_1(w)}\big)}dw \big)$$is convergent and according to (\ref{intform1}) $\hat u_1$ also is convergent.

This proves the convergence part in Theorem~\ref{Theorem:twoformalsolutionsconverge}.

\end{proof}

We stress the fact that we are not assuming the ODE to be regular at $x_0$.

\subsubsection{The wronskian I}

Consider the linear homogeneous second order ODE
\begin{equation}\label{edo7}
a(x)y^{\prime\prime}+b(x)y^\prime+c(x)y=0
\end{equation}
where  $a(x),b(x),c(x)$ are differentiable real or complex functions defined in some open subset $U\subset \mathbb R, \mathbb C$. We may assume that $U$ is an open disc centered at the origin $0 \in \mathbb R, \mathbb C$. We make no hypothesis on the nature of the point $x=0$ as a singular or ordinary point of  (\ref{edo7}).
Given two solutions  $y_1$ and $y_2$ of (\ref{edo7}) their {\it wronskian} is defined by $W(y_1,y_2)(x)=y_1(x)y_2^\prime(x)-u_2(x)y_1^\prime(x)$.
\begin{Claim}
The wronskian $W(y_1,y_2)$ satisfies the following first order ODE
\begin{equation}\label{wronskiandef}
a(x)w^\prime+b(x)w=0.
\end{equation}
\end{Claim}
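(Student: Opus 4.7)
The plan is to carry out the classical Abel-style elimination. First I would differentiate the wronskian directly: writing $W = y_1 y_2' - y_2 y_1'$, the cross terms $y_1' y_2'$ cancel so that
\[
W'(x) = y_1(x) y_2''(x) - y_2(x) y_1''(x).
\]
In particular, $W'$ involves the second derivatives of $y_1, y_2$ and nothing else new, which is what makes the elimination work.

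Next I would use the fact that $y_1$ and $y_2$ each satisfy (\ref{edo7}). Multiplying the equation for $y_1$ by $y_2$, and the equation for $y_2$ by $y_1$, yields
\[
a(x) y_2 y_1'' + b(x) y_2 y_1' + c(x) y_1 y_2 = 0,\qquad
a(x) y_1 y_2'' + b(x) y_1 y_2' + c(x) y_1 y_2 = 0.
\]
Subtracting the first from the second cancels the $c(x) y_1 y_2$ term, giving
\[
a(x)\bigl(y_1 y_2'' - y_2 y_1''\bigr) + b(x)\bigl(y_1 y_2' - y_2 y_1'\bigr) = 0.
\]
By the computation of $W$ and $W'$ above, this is exactly $a(x) W'(x) + b(x) W(x) = 0$, proving the claim.

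There is essentially no obstacle here: the only point worth remarking is that no regularity or non-vanishing hypothesis on $a(x)$ is used — the identity holds pointwise throughout $U$, including at possible singular points where $a$ vanishes. This will be important later, since the wronskian ODE is being applied at points that may be singular for (\ref{edo7}) itself.
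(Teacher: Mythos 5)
Your computation is correct and is precisely the ``straightforward computation'' that the paper alludes to but omits (the paper states the Claim is well known and gives no proof). The Abel-style elimination you carry out --- differentiating $W$, cross-multiplying the two copies of the ODE, and subtracting to kill the $c(x)y_1y_2$ term --- is the standard argument, and your closing remark that the identity holds pointwise even where $a(x)$ vanishes is indeed the feature the paper relies on when it applies the wronskian ODE at singular points.
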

This is a well-known fact and we shall not present a proof, which can be done by straightforward computation.
Most important, the above fact allows us to introduce the notion of {\it wronskian} of a general second order linear homogeneous ODE as \eqref{edo7} as follows:

\begin{Definition}
The wronskian of \eqref{edo7} is defined as the general solution of \eqref{wronskiandef}.
\end{Definition}

Hence, in general the wronskian is of the form
\begin{equation}
\label{eq:wronskianform}
W(x)=K\exp\big(-\dint^x\frac{b(\eta)}{a(\eta)}d\eta\big)
\end{equation}
where  $K$ is a constant.

A well-known consequence of the above formula is the following:

\begin{Lemma}
Given solutions $y_1(x), y_2(x)$ the following conditions are equivalent:
\begin{enumerate}
\item $W(y_1,y_2)(x)$ is identically zero.
\item $W(y_1,y_2)(x)$ vanishes at some point $x=x_0$.
\item $y_1(x), y_2(x)$ are linearly dependent.
\end{enumerate}
\end{Lemma}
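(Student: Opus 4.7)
The plan is to prove the chain $(3)\Rightarrow(1)\Rightarrow(2)\Rightarrow(3)$ with the two easy implications handled directly and the substantive work concentrated in $(2)\Rightarrow(3)$, which splits into $(2)\Rightarrow(1)$ via Abel's formula and then $(1)\Rightarrow(3)$ via uniqueness of Cauchy data at an ordinary point. First, $(3)\Rightarrow(1)$: if $y_2 = \lambda y_1$ for some scalar $\lambda$, then
\[
W(y_1,y_2) = y_1(\lambda y_1)' - (\lambda y_1) y_1' = \lambda y_1 y_1' - \lambda y_1 y_1' = 0
\]
identically, and the case $y_1 = \mu y_2$ is symmetric. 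The implication $(1)\Rightarrow(2)$ is trivial.

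For $(2)\Rightarrow(1)$, I would invoke formula~(\ref{eq:wronskianform}): on every connected component $V$ of $U \setminus \{a=0\}$ the wronskian admits the representation $W(x) = K \exp\bigl(-\int^x b(\eta)/a(\eta)\, d\eta\bigr)$ for some constant $K = K(V)$. Since the exponential factor never vanishes, vanishing of $W$ at one point of $V$ forces $K(V) = 0$, hence $W \equiv 0$ on $V$. If the hypothetical zero $x_0$ lies in the singular locus $\{a=0\}$, one picks a sequence of ordinary points accumulating at $x_0$: by continuity of $W$ at least one ordinary point is a zero, and one applies the previous argument there. Finally, $W$ is continuous on $U$ and vanishes on the (dense) complement of $\{a=0\}$, so $W \equiv 0$ on $U$.

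For $(1)\Rightarrow(3)$, fix any ordinary point $x_1 \in U$, i.e.\ $a(x_1) \neq 0$. Since $W(y_1,y_2)(x_1)=0$, the matrix
\[
\begin{pmatrix} y_1(x_1) & y_2(x_1) \\ y_1'(x_1) & y_2'(x_1) \end{pmatrix}
\]
is singular, so there exist constants $c_1, c_2$, not both zero, with $c_1 y_1(x_1) + c_2 y_2(x_1) = 0$ and $c_1 y_1'(x_1) + c_2 y_2'(x_1) = 0$. The function $y(x) := c_1 y_1(x) + c_2 y_2(x)$ is a solution of~(\ref{edo7}) whose Cauchy data at $x_1$ are trivial; by the classical uniqueness theorem for linear ODEs at an ordinary point, $y \equiv 0$ throughout the connected component of $U\setminus\{a=0\}$ containing $x_1$, and by continuity on all of $U$. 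Thus $y_1, y_2$ are linearly dependent.

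The main point to be careful about is not an obstacle of depth but of bookkeeping: Abel's formula and the uniqueness theorem are each only immediately available at ordinary points, while the statement is formulated on all of $U$. The reduction works because $U$ is connected, $\{a=0\}$ is small (isolated zeros if $a$ is analytic, and in any case of empty interior under mild non-degeneracy), and both $W$ and $c_1 y_1 + c_2 y_2$ are continuous; propagation from the ordinary locus to $U$ is therefore automatic.
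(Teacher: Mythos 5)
The paper does not actually prove this lemma: it is stated as ``a well-known consequence'' of the formula $W(x)=K\exp\bigl(-\int^x b(\eta)/a(\eta)\,d\eta\bigr)$, with no argument supplied. Your proof is the standard one that the authors presumably have in mind, and the implications $(3)\Rightarrow(1)$, $(1)\Rightarrow(2)$, and the Abel-formula step $(2)\Rightarrow(1)$ restricted to a single connected component of the ordinary locus are all fine, as is $(1)\Rightarrow(3)$ at an ordinary point via uniqueness of the Cauchy problem.

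The genuine gap is the final propagation step, which you describe as ``automatic'' but which in fact fails exactly in the setting the paper allows (``we make no hypothesis on the nature of the point $x=0$''). Continuity of $y=c_1y_1+c_2y_2$ only forces $y$ to vanish on the \emph{closure} of the connected component of $U\setminus\{a=0\}$ where you ran the uniqueness argument; it says nothing about the other components, and the constants $c_1,c_2$ produced at $x_1$ need not work there. In the real case with $U$ an interval and $a$ vanishing at an interior point, $U\setminus\{a=0\}$ is disconnected and the implication $(1)\Rightarrow(3)$ is simply false as stated: for $x^2y''-2xy'+2y=0$ on $(-1,1)$ the functions $y_1(x)=x^2$ and $y_2(x)=x|x|$ are both solutions, $W(y_1,y_2)\equiv 0$, yet $y_1,y_2$ are linearly independent on $(-1,1)$ (they are proportional on each half-interval with different constants). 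The same disconnectedness issue infects your $(2)\Rightarrow(1)$ step, since the constant $K$ in Abel's formula may differ from component to component (e.g.\ $W=x^2$ for $x>0$, $W=0$ for $x\le 0$ solves $xW'-2W=0$). To make the lemma true one must either assume every point of $U$ is ordinary, or work in the complex-analytic setting where $\{a=0\}$ is discrete, $U\setminus\{a=0\}$ is connected, and the identity theorem does the propagation; your proof should state one of these hypotheses explicitly rather than appeal to density and continuity.
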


Let us analyze the consequences of this form. We shall consider the origin as the  center of our disc domain.
In what follows the coefficients are analytic in a neighborhood of the origin.

\noindent{\bf Case (1)}: If  $\frac{b}{a}$ has poles of order  $r>1$ at the origin:
 In this case we can write
$$\frac{b(x)}{a(x)}=\frac{A_r}{x^r}+\ldots+\frac{A_2}{x^2}+\frac{A_1}{x}+d(x)$$
where  $A_1,A_2,\ldots,A_r$ are constant, $A_r\neq0$ and  $d$ is analytic at the origin. Thus we have
$$W(x)=K\exp\big(-\dint^x\big(\frac{A_r}{w^r}+\ldots+\frac{A_2}{w^2}+\frac{A_1}{w}+d(w)\big)dw\big)$$
$$W(x)=K\exp\big(\frac{A_r}{(r-1)x^{r-1}}+\ldots+\frac{A_2}{x}-A_1\log |x|+\tilde{d}(x)\big)$$ where $\tilde{d}$ is analytic. Hence
$$W(x)=K|x|^{-A_1}\exp\big(\frac{A_r}{(r-1)x^{r-1}}+\ldots+\frac{A_2}{x}\big)\exp\big(\tilde{d}(x)\big).$$
Now observe that  $\exp\big(\frac{A_r}{(r-1)x^{r-1}}+\ldots+\frac{A_2}{x}\big)$ is neither analytic nor formal. Therefore, in this case,  $W$ is neither analytic nor formal.

\noindent{\bf Case (2)}: $\frac{b}{a}$ has poles of order $\leq 1$ at the origin.
In this case $$\frac{b(x)}{a(x)}=\frac{A_1}{x}+d(x)$$
and  $$W(x)=K|x|^{-A_1}\exp\big(\tilde{d}(x)\big).$$
If $W$ is analytic or formal then we must have $A_1\in\{0,-1,-2,-3,\ldots\}$.

Summarizing we have:

\begin{Lemma}
\label{Lemma:simplepole}
Assume that the wronskian $W$ of the ODE $a(x) y^{\prime\prime} + b(x) y^\prime + c(x) y=0$,
with analytic coefficients, is analytic or formal. Then
 $\frac{b}{a}$ has a pole of order  $r\leq 1$ at the origin. Moreover, we must have
 $W(x)=K|x|^{-A}\exp\big(f(x)\big)$, where $A\in\{0,-1,-2,-3,\ldots\}$ and $f$ is analytic.
\end{Lemma}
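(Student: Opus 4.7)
The plan is to exploit the explicit wronskian formula \eqref{eq:wronskianform}, namely $W(x)=K\exp\bigl(-\int^{x}b(\eta)/a(\eta)\,d\eta\bigr)$, and to split the analysis according to the pole order of the meromorphic function $b/a$ at the origin. Since $a$ and $b$ are analytic at $0$, the ratio $b/a$ is meromorphic there, so there are only two possibilities to rule out or refine: either $b/a$ has a pole of order $r\geq 2$, or $b/a$ has at most a simple pole.

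First, I would handle the case $r\geq 2$. Writing the Laurent expansion $b(x)/a(x)=A_r/x^{r}+\dots+A_2/x^{2}+A_1/x+d(x)$ with $A_r\neq 0$ and $d$ analytic, termwise integration yields a primitive of the form $-A_r/((r-1)x^{r-1})-\dots -A_2/x-A_1\log|x|+\tilde d(x)$ with $\tilde d$ analytic. Exponentiating, $W(x)=K|x|^{-A_1}\exp\bigl(A_r/((r-1)x^{r-1})+\dots+A_2/x\bigr)\exp(\tilde d(x))$. The key point is that the middle factor has an essential singularity at the origin (its Laurent series has infinitely many negative-power terms once expanded, because exponentiating a genuine pole produces an essential singularity in both the convergent and the formal sense). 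Hence $W$ cannot be analytic nor a formal power series, contradicting the hypothesis. This rules out the case $r\geq 2$.

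Next, in the remaining case $r\leq 1$, we may write $b(x)/a(x)=A_1/x+d(x)$ with $d$ analytic (and $A_1=0$ if the origin is a regular point of $b/a$). Integrating gives $-A_1\log|x|+\tilde d(x)$ with $\tilde d$ analytic, and therefore
\begin{equation*}
W(x)=K|x|^{-A_1}\exp(\tilde d(x)).
\end{equation*}
Setting $f:=\tilde d$, which is analytic, and $A:=A_1$, the claimed form $W(x)=K|x|^{-A}\exp(f(x))$ is established. It remains to constrain $A$: since $\exp(f(x))$ is a unit in the ring of analytic (resp.\ formal) functions at the origin, the requirement that $W$ be analytic or formal is equivalent to $|x|^{-A}$ being analytic or formal, which forces $-A\in\{0,1,2,3,\ldots\}$, i.e.\ $A\in\{0,-1,-2,-3,\ldots\}$.

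The only real point requiring care, and the main obstacle in the argument, is the claim that $\exp(P(1/x))$ with $P$ a non-constant polynomial of degree $r-1\geq 1$ is neither convergent nor a formal Laurent series at $0$. This can be justified directly: along rays where $\mathrm{Re}(P(1/x))\to +\infty$ the function grows faster than any power of $1/|x|$, which is incompatible with membership in the field of formal Laurent series (whose elements have finite order of pole) and a fortiori with analyticity. Once this is settled, the rest of the argument is the routine integration-and-exponentiation already sketched in the case analysis preceding the lemma.
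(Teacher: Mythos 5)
Your proposal is correct and follows essentially the same route as the paper: split on the pole order of $b/a$, integrate the Laurent expansion inside the exponential in formula \eqref{eq:wronskianform}, observe that a pole part of order $\geq 2$ produces a factor $\exp\bigl(A_r/((r-1)x^{r-1})+\dots+A_2/x\bigr)$ that is neither analytic nor formal, and in the remaining case read off $W(x)=K|x|^{-A_1}\exp(\tilde d(x))$ with $-A_1$ forced into $\{0,1,2,\ldots\}$. The only addition is your explicit justification that $\exp(P(1/x))$ has an essential singularity in both the convergent and formal senses, a point the paper merely asserts; that justification is sound.
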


Now we are able to prove the remaining part of Theorem~\ref{Theorem:twoformalsolutionsconverge}:

\begin{proof}[End of the proof of Theorem~\ref{Theorem:twoformalsolutionsconverge}]
We have already proved the first part. Let us now prove that the origin is an ordinary point
or a regular singularity of the ODE. This is done by means of the two following claims:
\begin{Claim}
The quotient $\frac{b}{a}$ has poles of order $\leq 1$ at the origin.
\end{Claim}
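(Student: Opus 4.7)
The plan is to deduce this claim directly from the wronskian analysis that was just developed, using the convergence part already established in the first half of the proof. Since we have already shown that $\hat y_1$ and $\hat y_2$ converge near the origin, we may regard them as bona fide analytic functions $y_1(x), y_2(x)$ in some disc around $0$, together with their derivatives $y_1'(x), y_2'(x)$.

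The first step is to form the wronskian $W(x) = y_1(x) y_2'(x) - y_2(x) y_1'(x)$. As a polynomial expression in analytic functions, $W$ is analytic in a neighborhood of the origin. Moreover, because $\hat y_1$ and $\hat y_2$ are linearly independent over $\mathbb{C}$, the above lemma about the wronskian (the equivalence of $W \equiv 0$, $W(x_0)=0$ for some $x_0$, and linear dependence) gives $W \not\equiv 0$.

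Next, we invoke the ODE $a(x) W' + b(x) W = 0$ satisfied by the wronskian. This yields the explicit form
\[
W(x) = K \exp\Bigl(-\int^x \frac{b(\eta)}{a(\eta)}\, d\eta\Bigr)
\]
for some non-zero constant $K$, exactly as in \eqref{eq:wronskianform}. Because $W$ has been shown to be analytic, it is in particular a formal power series, so Lemma~\ref{Lemma:simplepole} applies and forces $b/a$ to have a pole of order at most one at the origin, which is the content of the claim.

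The only subtle point is ensuring that $W$ really is analytic (not merely formal) at $x_0 = 0$; this is where we crucially use the convergence statement proved in the first part of Theorem~\ref{Theorem:twoformalsolutionsconverge}, together with the fact that term-by-term differentiation of a convergent power series gives a convergent power series. Beyond this, the argument is essentially a direct substitution into Lemma~\ref{Lemma:simplepole}, and no further estimates are required.
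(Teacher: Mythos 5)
Your argument is correct and reaches the claim by the same essential route as the paper: both pass through the first-order ODE $a(x)W'+b(x)W=0$ for the wronskian, the explicit exponential formula \eqref{eq:wronskianform}, and Lemma~\ref{Lemma:simplepole}. The one genuine difference is how you certify that $W$ is admissible input for that lemma: you first upgrade $\hat y_1,\hat y_2$ to convergent (hence analytic) solutions using the convergence part of Theorem~\ref{Theorem:twoformalsolutionsconverge}, whereas the paper simply observes that the wronskian of two \emph{formal} power series is itself a formal power series, which already suffices since Lemma~\ref{Lemma:simplepole} accepts ``analytic or formal.'' Your detour through convergence is logically sound at this point in the text, but it is worth noting what it costs: it makes the regularity conclusion depend on the Cerveau--Mattei convergence argument, and the paper later gives an ``alternative proof'' of Theorem~\ref{Theorem:twoformalsolutionsconverge} that derives convergence \emph{from} regularity, explicitly relying on the regularity step being ``based only on classical methods of Frobenius and ODEs.'' Under your version of the claim that alternative proof would become circular, so the paper's weaker hypothesis (formality rather than analyticity of $W$) is the better choice here. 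Your explicit appeal to the wronskian lemma to guarantee $W\not\equiv 0$ (so that $K\neq 0$ in the exponential formula) is a point the paper leaves implicit, and is a welcome addition.
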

\begin{proof}
Indeed, since by hypothesis there are two formal linearly independent functions,
the wronskian is formal. Thus, from the above discussion we conclude.
\end{proof}
The last part is done below. For simplicity we shall assume that $x=z\in \mathbb C$ and that the coefficients are complex analytic (holomorphic) functions.

\begin{Claim}
\label{Claim:poleordertwo}
We have $\displaystyle\lim_{z\to 0} z^2 \frac{c(z)}{a(z)} \in \mathbb C$.
\end{Claim}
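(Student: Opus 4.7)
The plan is to use a single formal power series solution to extract from the ODE an identity expressing $z^2 c(z)/a(z)$ as a formal power series in $z$, and then upgrade this to an analytic statement via the meromorphicity of $c/a$.

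Concretely, I would pick one of the nonzero formal solutions, say $\hat u_1 \in \mathbb C[[z]]$, denote its order at $0$ by $n_1 \ge 0$, and factor $\hat u_1(z) = z^{n_1} A(z)$ where $A \in \mathbb C[[z]]$ is a formal unit (that is, $A(0) \ne 0$). Since $A$ is then invertible in $\mathbb C[[z]]$, both $A'/A$ and $A''/A$ lie in $\mathbb C[[z]]$, and a direct computation of logarithmic derivatives gives
$$z\,\frac{\hat u_1'}{\hat u_1} \;=\; n_1 + z\,\frac{A'}{A}, \qquad z^2\,\frac{\hat u_1''}{\hat u_1} \;=\; n_1(n_1-1) + 2 n_1\, z\,\frac{A'}{A} + z^2\,\frac{A''}{A},$$
both of which are elements of $\mathbb C[[z]]$. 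Dividing the ODE $a u'' + b u' + c u = 0$ by $a(z)\hat u_1(z)$ and multiplying through by $z^2$ then yields the formal identity
$$z^2\,\frac{c(z)}{a(z)} \;=\; -\,z^2\,\frac{\hat u_1''}{\hat u_1} \;-\; \left(z\,\frac{b(z)}{a(z)}\right)\!\left(z\,\frac{\hat u_1'}{\hat u_1}\right).$$

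By the preceding claim, $zb(z)/a(z)$ is analytic at the origin, hence lies in $\mathbb C[[z]]$, so the right hand side of the displayed identity is a formal power series in $z$. Consequently the Laurent expansion of $z^2 c/a$ at $z = 0$ has no negative powers. However, $z^2 c(z)/a(z)$ is a genuine meromorphic function in a punctured neighborhood of $0$ (because $a,b,c$ are analytic and $a \not\equiv 0$), and its convergent Laurent expansion there must coincide with the formal one just computed. The absence of negative powers therefore forces $z^2 c/a$ to extend analytically across $z = 0$, giving the required finite limit.

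The only point requiring care is ensuring that the formal manipulations are legitimate — this is precisely why we factor $\hat u_1 = z^{n_1} A$ with $A(0) \ne 0$, so that $1/A$ exists in $\mathbb C[[z]]$ and the logarithmic derivatives above are honest formal power series. Once this is in place the argument is essentially bookkeeping; no serious obstacle arises, since the two nontrivial ingredients (existence of at least one formal power series solution of finite order at the origin, and the fact, established in the previous claim, that $zb/a$ is analytic) are already available.
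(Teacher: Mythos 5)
Your proof is correct, but it takes a genuinely different route from the one in the paper. The paper argues by contradiction: assuming $c(z)/a(z)$ has a pole of order $\geq 3$, it normalizes the equation to $z^{3+\nu}u'' + z^{2+\nu}\tilde b(z)u' + \tilde c(z)u=0$ with $\tilde c(0)\neq 0$ (after setting $\tilde c(0)=1$, $\nu=0$ ``for simplicity''), substitutes a formal power series, and runs an explicit coefficient recursion to show every $a_n$ vanishes, contradicting the existence of a nontrivial formal solution. You instead give a direct proof: writing $\hat u_1=z^{n_1}A$ with $A$ a formal unit, you observe that $z\,\hat u_1'/\hat u_1$ and $z^2\hat u_1''/\hat u_1$ lie in $\mathbb C[[z]]$, and then simply solve the identity obtained from the ODE in $\mathbb C((z))$ for $z^2c/a$, exhibiting it as a formal power series; since $z^2c/a$ is meromorphic at $0$ and its Laurent expansion agrees with the algebraic quotient in $\mathbb C((z))$, the absence of negative powers gives the removable singularity. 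Your version buys uniformity (no normalization of $\tilde c(0)$ and $\nu$, which the paper dismisses with ``this does not affect the argumentation'') and replaces the coefficient chase by a one-line algebraic rearrangement, at the cost of invoking the identification of the Laurent series of a meromorphic germ with the quotient of Taylor series in $\mathbb C((z))$. Both arguments use the same two inputs: one nontrivial formal solution, and the previously established fact that $zb(z)/a(z)$ is analytic at the origin.
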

\begin{proof}
Write $a(z) u^{\prime\prime} + b(z) u ^\prime + c(z) u=0$ and $a(z)=z^k$ according to the local form
of holomorphic functions. Since $\displaystyle\lim_{z \to 0}z\frac{b(z)}{a(z)}\in \mathbb C$ we must have
$\frac{b(z)}{a(z)}=\frac{\tilde b(z)}{z}$ for some holomorphic function $\tilde b(z)$ at $0$.
Then we have for the ODE above
$$
z^{3+ \nu} u ^{\prime \prime} + z^{ 2 + \nu} \tilde b(z) u ^\prime + \tilde c (z) u=0.
$$

Assume that the Claim is not true, then $\frac{c(z)}{a(z)}$ must have a pole of order $\geq 3$ at $0$.
Thus we may write $\frac{c(z)}{a(z)}=\frac{c(z)}{z^k}= \frac{\tilde c(z)}{z^{3+ \nu}}$ for some
holomorphic function $\tilde c(z)$ at $0$ and some $\nu \geq 0$. For sake of simplicity we will assume that $\tilde c(0)=1$ and $\nu=0$. This does not affect the argumentation
below. We write $\tilde b(z)= b_0 + b_1 z + b_2 z^2 +\ldots$ and $\tilde c(z)= 1 +c_1 z + c_2 z^2 + \ldots$ in power series.
Substituting this in the ODE we obtain
 $$
 z^{3+\nu}u^{\prime \prime} + z^{2+\nu} (b_0+b_1z+b_2 z^2+\ldots) u ^\prime +
 (1 +c_1 z + c_2 z^2 + \ldots) u=0.
$$
Now we write $u(z)=\sum\limits_{n=0}^\infty a_n z^n$ in power series.
We obtain
$$
\sum\limits_{n=2}^\infty n(n-1)a_n z^{n+ 1 + \nu} + (b_0 + b_1 z +\ldots)\sum\limits_{n=1}^\infty
na_n z^{n+1+\nu} + (1 + c_1 z+ c_2 z^2 +\ldots) \sum\limits_{n=0}^\infty a_n z^n=0.
$$

Now we start by comparing the lower powers of $z$ on each term of the above expression.
We have
$$
\sum\limits_{n=2}^\infty n(n-1) a_nz^{n+ 1 + \nu}=2a_2z^{3 +\nu} +6a_3 z^{4+ \nu}+ \ldots
$$

$$
(b_0 + b_1 z +\ldots)\sum\limits_{n=1}^\infty
na_n z^{n+1+\nu}= b_0 a_1 z^{ 2 + \nu} +(2 b_0a_2 + b_1 a_1) z^{3+\nu} +  \ldots
$$
and finally
$$
(1 + c_1 z+ c_2 z^2 +\ldots) \sum\limits_{n=0}^\infty a_n z^n= a_0 + (a_1 + c_1 a_0) z +
(a_2 + c_1 a_1 + a_0 c_2) z^2 + \ldots
$$
Starting now from the lowest powers of $z$ in the expression of the ODE above we obtain
$a_0=0$. Also we obtain $a_1 + c_1 a_0=0$ and therefore $a_1=0$. Since $\nu=0$ we have the power of $z^2$ this gives $b_0 a_1 + a_2 + c_1 a_1 + a_0 c_2=0$ and then $a_2=0$.
Now for the coefficient of $z^3$ we obtain $2a_2 + 2 b_0a_2 + b_1 a_1 + a_3=0$ and therefore $a_3=0$.
And so on we conclude that $a_n=0$, for all $n \geq 0$ ie., $u=0$ is the only possible formal solution.
This proves the claim by contradiction.
\end{proof}

The two claims above end the proof of Theorem~\ref{Theorem:twoformalsolutionsconverge}.
\end{proof}

Next we present a result that also implies a more simple proof of Theorem~\ref{Theorem:twoformalsolutionsconverge}.

\begin{Theorem}[1 formal solution second order regular singularity]
Consider a second order ordinary differential equation given by
\begin{equation}\label{oneformal}
a(x)y^{\prime\prime}+b(x)y^\prime+c(x)y=0
\end{equation}
where $a,b,c$ are analytic functions at $x_0 \in \mathbb R$.
Suppose that \eqref{oneformal} has at $x_0$  an ordinary point or a regular singular point.
Then a formal solution $\hat y(x) = \sum\limits_{n=0}^\infty a_n (x-x_0)^n$ is always convergent in some neighborhood
$|x-x_0|<R$ of the point $x_0$. Indeed, this solution  converges in the same disc type neighborhood where
the coefficients $a(x), b(x), c(x)$ are analytic.\end{Theorem}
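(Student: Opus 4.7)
Without loss of generality let $x_0=0$. Since the point is either ordinary or a regular singularity, the equation can be rewritten in the Frobenius form
$$x^2y''+xb(x)y'+c(x)y=0,$$
where $b(x)=\sum b_kx^k$ and $c(x)=\sum c_kx^k$ are analytic on $|x|<R$. (In the ordinary-point case $b$ and $c$ vanish to appropriate orders at $0$, so the indicial polynomial becomes $r(r-1)$; the argument is uniform in both cases.) Substituting $\hat y(x)=\sum_{n\ge0}a_nx^n$ and collecting powers of $x$ yields the recurrence
$$q(N)\,a_N \,=\, -\sum_{j=0}^{N-1}(jb_{N-j}+c_{N-j})\,a_j,\qquad N\ge 0,$$
where $q(r)=r(r-1)+b_0r+c_0$ is the indicial polynomial. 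Since $\hat y$ is a formal solution by hypothesis, this recurrence holds; the plan is to extract from it a uniform geometric bound on $|a_n|$.

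The core estimate is a majorant-style induction. I first fix $\rho<R$ and, by analyticity, obtain $M>0$ with $|b_k|,|c_k|\le M/\rho^k$ for all $k$. Since $q(N)\sim N^2$ as $N\to\infty$, there is an $N_0$ with $|q(N)|\ge N^2/2$ for all $N\ge N_0$ that are not roots of $q$. Choosing $K>1/\rho$ and setting $t:=1/(K\rho)\in(0,1)$, I expect to prove that $|a_n|\le AK^n$ for all $n$ and a suitable constant $A>0$: from the recurrence, assuming the bound for $j<N$,
$$|a_N| \,\le\, \frac{2}{N^2}\sum_{j=0}^{N-1}(j+1)\,\frac{M}{\rho^{N-j}}\,AK^j \,=\, \frac{2MAK^N}{N^2}\sum_{k=1}^{N}(N-k+1)\,t^k,$$
and the geometric tail sum is dominated by a constant multiple of $N$, giving $|a_N|\le C\,AK^N/N\le AK^N$ once $N$ is sufficiently large. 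Hence $\hat y$ converges on $|x|<1/K$; since $K$ can be taken arbitrarily close to $1/\rho$ and $\rho<R$ is arbitrary, convergence holds on the full disc $|x|<R$.

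The main obstacle is the finite set of indices $N$ at which $q(N)=0$, namely non-negative integer roots of the indicial polynomial. At such an $N$ the recurrence determines nothing about $a_N$ but rather imposes a compatibility condition on the earlier coefficients; this condition is automatically satisfied because $\hat y$ is a formal solution, and the value of $a_N$ at such an index belongs to the given data of the formal series. Since there are at most two such indices, I simply enlarge the base constant $A$ of the induction to accommodate their finitely many values, and then run the geometric estimate for $N$ beyond the largest exceptional index. This uniformly handles both the ordinary and regular singular cases and proves the theorem.
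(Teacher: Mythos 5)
Your proof is correct, but it takes a genuinely different route from the paper's. The paper first disposes of the ordinary-point case by classical existence theory, then, in the regular singular case, writes the formal solution as $\hat y(x)=x^{r_1}(1+\varphi(x))$ to show that its order $r_1$ is a non-negative integer root of the indicial equation; it then invokes the classical Frobenius theorem to produce a convergent solution with exponent the larger root and argues by cases: either $\hat y$ is proportional to that convergent solution, or the two are linearly independent formal solutions, in which case the convergence part of the two-formal-solutions theorem (itself proved via an integrable $1$-form and the Cerveau--Mattei theorem on formal meromorphic first integrals) applies. You instead work directly with the recurrence $q(N)a_N=-\sum_{j<N}(jb_{N-j}+c_{N-j})a_j$ and run a majorant induction $|a_n|\le AK^n$ using Cauchy estimates on $b,c$ and the growth $|q(N)|\ge N^2/2$, absorbing the finitely many exceptional indices (small $N$ and integer roots of $q$) into the constant $A$. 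What your approach buys is self-containedness and independence: it uses neither the classical Frobenius existence theorem nor the Cerveau--Mattei machinery, and it therefore avoids the circularity the paper itself flags (its proof of this statement "still makes use of the convergence part" of the earlier theorem and so cannot serve as an alternative proof of it); your estimate could in fact be fed back into that earlier theorem. What the paper's route buys is structural information — it identifies the exponent of the formal solution as an indicial root and situates $\hat y$ inside the explicit Frobenius basis — at the cost of leaning on heavier prerequisites. Your quantitative bound also makes the radius of convergence ($|x|<R$ after letting $\rho\to R$ and $K\to 1/\rho$) transparent, which is exactly the "same disc of analyticity" claim in the statement.
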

\begin{proof}
First of all we are assuming that the origin is an ordinary point or a regular singularity of the ODE.
If it is an ordinary point, then by the classical existence theorem for ODEs there are two linearly independent analytic solutions and any solution, formal or convergent, will be a linear combination of these two solutions. Such a solution is therefore convergent.

Thus we may write the ODE as
$$
x^2 y^{\prime \prime} + xb(x)y^\prime + c(x) y=0
$$
where the new coefficients $b(x)$ and $c(x)$, obtained after
renaming $xb(x)/a(x)$ and $x^2c(x)/a(x)$ conveniently, are analytic.

Let us consider a formal solution $\hat y(x)=\sum\limits_{n=0}^\infty d_n x^n$.
We can write $\hat y(x)=x^{r_1}(1+\vr(x))$ for some $r_1 \geq 0$ and $\vr(x)$ a formal function
with $\vr(0)=0$. In other words, $r_1\in \{0,1,2,\ldots\}$ is the order of $\hat y(x)$ at the origin.
Then we have
$\hat y^\prime(x)= r_1 x^{r_1-1} ( 1 + \vr(x)) + x^{r_1} \vr^\prime(x)$ and
$\hat y^{\prime \prime}(x)=r_1 (r_1-1) x^{r_1-2} (1 + \vr(x)) + 2 r_1 x^{r_1-1} \vr^\prime(x)+x^{r_1}\vr^{\prime\prime}(x)$.
Substituting this in the ODE $x^2 \hat y^{\prime \prime}(x) + xb(x)\hat y^\prime(x) + c(x) \hat y(x)=0$ and dividing by $x^{r_1}$ we obtain
$$
r_1(r_1-1)(1 +\vr(x)) + 2r_1 x \vr^\prime(x) + x^2 \vr^{\prime \prime} (x) +
r_1 b(x) (1 + \vr(x)) + x b(x) \vr^{\prime}(x) + c(x)(1 + \vr(x))=0.
$$

For $x=0$, since $\vr(0)=0$,  we then obtain the equation
$$
r_1(r_1-1) + r_1 b(0) + c(0)=0.
$$

The above is exactly the indicial equation associated to the original ODE.
We then conclude that the original  ODE has an indicial equation with a root
$r_1$ that belongs to the set of non-negative integers.
Let now $r\in \mathbb Z$ be the other root of the indicial equation. There are two possibilities:

\noindent (i) $r\geq r_1$. In this case,  then according to Frobenius classical theorem
we conclude that there is at least one solution $y_r(x)=x^r \sum\limits_{n=0}^\infty e_n x^n$ which is convergent. There are two possibilities:

\noindent (i.1) $y_r(x)$ and $\hat y(x)$ are linearly dependent: in this case, $y_r(x)=\ell\cdot\hat y(x)$
for some constant $\ell\in \mathbb R, \mathbb C$. Then $r=r_1$ and therefore $y_r(x)$ is analytic and the same holds for $\hat y(x)$. More precisely,  $\hat y(x)$ is analytic in the same neighborhood $|x|<R$ where $b(x), c(x)$ are convergent.

\noindent (i.2) $y_r(x)$ and $\hat y(x)$ are linearly independent: Since $y_r(x)$ is analytic and seeing  $y_r(x)$ as a formal solution, we have two linearly independent formal solutions. From what we have seen above in Theorem~\ref{Theorem:twoformalsolutionsconverge} both solutions are convergent in the common disc domain of analyticity of the functions $b(x), c(x)$.

\noindent (ii) $r_1\geq r$. In this case,  then according to Frobenius classical theorem
we conclude that there is at least one solution $\tilde y_{r_1}(x)=x^{r_1}\sum\limits_{n=0}^\infty f_n x^n$, where the power series is convergent. There are two possibilities:

\noindent (ii.1) $\tilde y_{r_1}(x)$ and $\hat y(x)$ are linearly dependent: in this case, $\tilde y_{r_1}(x)=\tilde \ell \cdot\hat y(x)$ for some constant $\tilde \ell\in \mathbb R, \mathbb C$. Then $r=r_1$ and therefore $\tilde y_{r_1}(x)$ is analytic and the same holds for $\hat y(x)$. More precisely,  $\hat y(x)$ is analytic in the same neighborhood $|x|<R$ where $b(x), c(x)$ are convergent.

\noindent (ii.2) $\tilde y_{r_1}(x)$ and $\hat y(x)$ are linearly independent: in this case, $\tilde y_{r_1}(x)$ is analytic and seeing  $\tilde y_{r_1}(x)$ as a formal solution, we have two linearly independent formal solutions. From what we have seen above in Theorem~\ref{Theorem:twoformalsolutionsconverge} both solutions are convergent in the common disc domain of analyticity of the functions $b(x), c(x)$.
\end{proof}

The above proof still makes use of the convergence part in Theorem~\ref{Theorem:twoformalsolutionsconverge}, thus it cannot be used to give an alternative proof of Theorem~\ref{Theorem:twoformalsolutionsconverge}.
Let us work on a totally independent proof of Theorem~\ref{Theorem:twoformalsolutionsconverge} based only
on classical methods of Frobenius and ODEs.
For this sake we shall need a few lemmas.

\subsubsection{The wronskian II}

We consider the ODE $x^2  y^{\prime \prime}  + xb(x) y^\prime + c(x) y=0$ with  a regular
singular point at the origin.

\begin{Lemma}
Let $\hat y(x)$ be a formal solution of the ODE. Then we must have $\hat y(x)=x^r (1+ \sum\limits_{n=1}^\infty a_n x^n)$
where $r$ is a root of the indicial equation of the ODE.
\end{Lemma}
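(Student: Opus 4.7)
The plan is to read off the exponent $r$ as the order of vanishing of $\hat y$ at the origin, factor it out, and then evaluate the ODE at $x=0$ to force $r$ to satisfy the indicial equation.

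More concretely, I would begin by writing $\hat y(x)=\sum_{n=0}^{\infty}d_n x^{n}$ as a formal power series and setting $r=\min\{n\ge 0:d_n\neq 0\}$ (if $\hat y\equiv 0$ there is nothing to prove). Then factor $\hat y(x)=x^{r}\hat\varphi(x)$ where $\hat\varphi(x)=d_r+d_{r+1}x+\cdots$ is a formal series with $\hat\varphi(0)=d_r\neq 0$. After normalizing by $d_r$ (which is legitimate since the equation is linear and homogeneous), one obtains the shape $\hat y(x)=d_r x^{r}\bigl(1+\sum_{n\ge 1}a_n x^{n}\bigr)$ claimed in the lemma; it only remains to check that this $r$ is a root of the indicial polynomial.

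For that, I would compute
\begin{align*}
x^2\hat y''&=r(r-1)\,x^{r}\hat\varphi+2r\,x^{r+1}\hat\varphi'+x^{r+2}\hat\varphi'',\\
xb(x)\hat y'&=r\,x^{r}b(x)\hat\varphi+x^{r+1}b(x)\hat\varphi',\\
c(x)\hat y&=x^{r}c(x)\hat\varphi,
\end{align*}
substitute into the ODE $x^2\hat y''+xb(x)\hat y'+c(x)\hat y=0$, divide by the common factor $x^{r}$ (which is harmless in the ring of formal Laurent series), and equate the constant term in the resulting formal identity. Using $b(x)=b(0)+O(x)$, $c(x)=c(0)+O(x)$ and $\hat\varphi(0)=d_r$, the constant term is
\[
\bigl(r(r-1)+r\,b(0)+c(0)\bigr)d_r=0.
\]
Since $d_r\neq 0$, the indicial equation $r(r-1)+r\,b(0)+c(0)=0$ holds, which is exactly what was to be proved.

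The argument is essentially bookkeeping, so there is no real obstacle; the only subtle point is to justify that the manipulation of substituting a formal power series into the linear differential operator and dividing by $x^{r}$ is legitimate. This is clear because each coefficient of the resulting formal series is a finite $\mathbb{R}$–linear combination of the $d_n$'s and the Taylor coefficients of $b,c$, so no convergence question is involved. Once the indicial equation is verified for this specific $r$, the lemma follows.
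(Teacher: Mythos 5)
Your proposal is correct and follows essentially the same route as the paper: the paper also writes $\hat y(x)=x^r(1+\vr(x))$ with $r$ the order of vanishing, substitutes into $x^2y''+xb(x)y'+c(x)y=0$, divides by $x^r$, and evaluates at $x=0$ to obtain $r(r-1)+rb(0)+c(0)=0$. Your write-up is a bit more explicit about normalizing by $d_r$ and about why the formal manipulations are legitimate, but the argument is the same.
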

\begin{proof}
Indeed, from the last proof if we write
$\hat y(x)=x^r(1+\vr(x))$ for some $r \geq 0$ and $\vr(x)$ a formal function
with $\vr(0)=0$ then we have that $$
r(r-1) + r b(0) + c(0)=0.
$$ which  is exactly the indicial equation associated to the original ODE.
\end{proof}
\begin{Remark}
Let $r\in \{0,1,2,\ldots\}$ be a root of the indicial equation
and assume that we have two solutions $\hat y_1(x)=x^r (1+ \vr_1(x))$ and $\hat y_2(x)=x^r (1 + \vr_2(x))$
which are formal. The wronskian writes
$W(\hat y_1,\hat y_2)(x)= \hat y_1(x)\hat y_2^\prime(x)-\hat y_1^\prime(x)\hat y_2(x)=x^r(1 +\vr_1(x))[r x^{r-1} (1+ \vr_2(x)) + x^r \vr_2 ^\prime(x)] -
 x^r(1 +\vr_2(x))[r x^{r-1} (1+ \vr_1(x)) + x^r \vr_1 ^\prime(x)] =x^{2r}[\varphi^\prime_2(x)(1+\varphi_1(x))-\varphi^\prime_1(x)(1+\varphi_2(x))]$.

 Then we have two cases:

 \noindent{\bf (i)} $r \geq 1$. In this case $W(\hat y_1,\hat y_2)(0)=0$. In this situation we must have $W(\hat y_1,\hat y_2) (x)=0$ and therefore $\hat y_1, \hat y_2$ are linearly dependent.

 \noindent {\bf (ii)} $r=0$.

\end{Remark}
Let us proceed. We are assuming now that we have two formal solutions $\hat y_1,\hat y_2$ for the ODE above.
We write $\hat y_j(x)=x^{r_j}(1 + \vr_j(x))$ for some formal series $\vr_j(x)$ that satisfies $\vr_j(0)=0$.
The exponents $r_j$ are non-negative integers and from what we have seen above, these are roots of the indicial equation
$r(r-1) + r b(0) + c(0)=0$ of the ODE. We may assume that $r_1 \geq r_2$.

So we have the following possibilities:

\noindent{\bf (i)} $r_1=r_2$. If this is the case we cannot a priori assure that the indicial equation has only the
root $r=r_1=r_2$.
Anyway, if $r \ne 0$ then from what we have seen above the formal solutions $\hat y_1, \hat y_2$ are linearly dependent.
This is a contradiction. Thus we must have $r=0$. If $r=0$ is the only root of the indicial equation
then we have a basis of the solution space given by $y_1(x)=1+\sum\limits_{n=1}^\infty e_n x^n$
and $y_2(x)=y_1(x) \log |x| + \sum\limits_{n=1}^\infty f_n x^n$.
If a linear combination $\hat y(x)=c_1 y_1(x) + c_2 y_2(x)$ is a formal function then necessarily $c_2=0$.
Thus any two formal solutions are linearly dependent.
Assume now that $r=0$ is not the only root of the indicial equation. Denote by $\tilde r\in\mathbb Z^*$ the other root of the indicial equation. There are two possibilities:

\noindent (a) $\tilde r>0$ then there is a basis of solutions given by
$y_1(x) = x^{\tilde r} (1+ \sum\limits_{n=1}^\infty g_n x^n)$ and
$y_2(x) = a y_1(x)\log|x| + |x|^0( 1 + \sum\limits_{n=1}^\infty  h_n x^n)$.
Let $y(x)=c_1 y_1(x) + c_2 y_2(x)$ be a formal power series.
Then $y(x)= (c_1 + a c_2 \log|x|)x^{\tilde r} (1+ \sum\limits_{n=1}^\infty g_n x^n) + c_2
( 1 + \sum\limits_{n=1}^\infty  h_n x^n). $ If $y(x)$ is a formal power series then we must have
$a c_2=0$ and therefore  $y(x)= c_1 x^{\tilde r} (1+ \sum\limits_{n=1}^\infty g_n x^n) + c_2
( 1 + \sum\limits_{n=1}^\infty  h_n x^n)$. In particular, since $\tilde r \in \mathbb N$,  $y(x)$ is convergent. This shows that the
formal solutions $\hat y_1(x), \hat y_2(x)$ are convergent and this is the only possible case where they can be linearly independent.

\noindent (b) $\tilde r<0$ then there is a basis of solutions given by
$y_1(x)=1 + \sum\limits_{n=1}^\infty
p_n x^n$ and $y_2(x)=ay_1(x) \log|x| + x^{\tilde r} (1 + \sum\limits_{n=1}^\infty q_n  x^n)$.
Write $y(x)= c_1 y_1(x) + c_2 y_2(x)$ for a linear combination of $y_1(x)$ and $y_2(x)$. Then
$y(x)=(c_1 + a c_2 \log|x|)(1 + \sum\limits_{n=1}^\infty
p_n x^n)  + c_2 (x ^{\tilde r} (1 + \sum\limits_{n=1}^\infty q_n  x^n))$.
If $y(x)$ is a formal series then necessarily $ac_2=0$ (because of the term $\log|x|$) and
also $c_2=0$ in this case because $\tilde r<0$. Thus we get $y(x) = c_1 y_1(x)$ which is convergent.
This shows that again we must have that  $\hat y_1$ and $\hat y_2$ are multiple of $y_1$ and therefore they are linearly dependent, contradiction again.

\noindent{\bf (ii)} $0< r_1 - r_2 =N \in \mathbb N$.
This case follows from facts already used above.
Since $r_1 > r_2$ and since each $r_j$ is a root of the indicial equation, we conclude that
these are the roots of the indicial  equation.
By Frobenius theorem there is a basis of the solutions given by $y_1(x)= x^{r_1}(1+ \sum\limits_{n=1}^\infty s_n x^n) $
and $y_2(x) =a y_1(x) \log|x| + |x|^{r_2}(1+\sum\limits_{n=1}^\infty t_n x^n)$.
If $y(x)=c_1 y_1(x) + c_ 2 y_2(x)$ is a formal power series then
we must have $ac_2=0$ and $y(x)=c_1x^{r_1}(1+ \sum\limits_{n=1}^\infty s_n x^n) +
c_2 x^{r_2}(1+\sum\limits_{n=1}^\infty t_n x^n)$ which is convergent. This shows that $\hat y_1, \hat y_2$ must
be convergent.

We are now in conditions of giving a second proof to Theorem~\ref{Theorem:twoformalsolutionsconverge}.

\begin{proof}[Alternative proof of Theorem~\ref{Theorem:twoformalsolutionsconverge}]
Indeed, from the second part of the proof (which is based only on classical methods of Frobenius and ODEs)
we know that the origin is an ordinary point or a regular singular point of the ODE.
Given the two linearly independent formal solutions $\hat y_j(x),\;j=1,2$,  from the above discussion,  the solutions $\hat y_1(x),\hat y_2(x)$ are analytic.
\end{proof}

\subsubsection{The wronskian III: some examples}
The next couple of examples show that the information on the wronskian (whether it is convergent, formal,etc) is not enough to infer about the nature of the solutions.
\begin{Example}[convergent wronskian but no formal solution]{\rm
This is an example of an ODE with a convergent wronskian but admitting no formal solution.
\begin{equation}\label{wronskconv}
x^3y^{\prime\prime}-x^2y^\prime-y=0.
\end{equation}
The origin is a non-regular singular point for (\ref{wronskconv}).
From what we have observed above the wronskian  $W$ of two linearly independent
solutions of (\ref{wronskconv}) satisfies the following first order ODE

$$x^3w^\prime-x^2w=0$$ whose solution is
of the form
$$W(x)=K\exp\big(\int^x \frac{\eta^2}{\eta^3}d\eta\big)=K\exp(\log x)=Kx$$ for some constant  $K$.

Let us now check that there are no formal solutions besides the trivial. Indeed, assume that
$y(x)=\displaystyle\sum^\infty_{n=0}a_nx^n$ is a formal solution of (\ref{wronskconv}). Then we have
$$x^3\big(\sum^{\infty}_{n=2}n(n-1)a_nx^{n-2}\big)-x^2\big(\sum^\infty_{n=1}na_nx^{n-1}\big)-\sum^\infty_{n=0}a_nx^n=0$$
$$\sum^{\infty}_{n=2}\big([n(n-1)-n]a_n-a_{n+1}\big)x^{n+1}-a_1x^{2}-a_2x^2-a_1x-a_0=0$$
so that  $a_0=a_1=a_2=0$ and $(n^2-2n)a_n=a_{n+1}$ for every
$n=2,3,\ldots$. Thus $a_n=0$ for every $n=0,1,2,\ldots$.
}
\end{Example}

\begin{Example}[non-convergent wronskian no formal solution]
{\rm We shall now give an example of an ODE with non-convergent wronskian and
admitting no formal solution but the trivial one.
The ODE
\begin{equation}\label{wronsknoconv}
x^3y^{\prime\prime}-xy^\prime-y=0.
\end{equation}
has a non-regular singular point at the origin.
Indeed, this wronskian is solution of the first order ODE

$$x^3w^\prime-xw=0$$ which has solutions of the form
$$W(x)=K\exp\big(\int^x \frac{\eta}{\eta^3}d\eta\big)=K\exp\big(-\frac{1}{x}\big)$$ where  $K$ is a constant.

Let us now check that
 (\ref{wronsknoconv}) admits no non-trivial formal solutions.
 Assume that $y(x)=\sum^\infty_{n=0}a_nx^n$ is a formal solution of  (\ref{wronsknoconv}). Then we must have

$$x^3\big(\sum^{\infty}_{n=2}n(n-1)a_nx^{n-2}\big)
-x\big(\sum^\infty_{n=1}na_nx^{n-1}\big)-\sum^\infty_{n=0}a_nx^n=0$$

$$\sum^{\infty}_{n=3}\big((n-1)(n-2)a_{n-1}-(n+1)a_{n}\big)x^{n}-2a_2x^{2}-a_2x^2-a_1x-a_1x-a_0=0$$

and then $a_0=a_1=a_2=0$ and $(n-1)(n-2)a_{n-1}=(n+1)a_{n}$ for all  $n=3,4,\ldots$. Hence
 $a_n=0$ for all $n=0,1,2,\ldots$.
}

\end{Example}

\subsection{Characterization of regular singular points: proof of Theorem~\ref{Theorem:characterizationregularpointordertwo}}

\vglue.1in

We shall now prove Theorem~\ref{Theorem:characterizationregularpointordertwo}.
\begin{proof}[Proof of Theorem~\ref{Theorem:characterizationregularpointordertwo}]
We shall first consider the complex analytic case.
We start then with a complex  analytic ODE of the form $a(z) u^{\prime \prime} + b(z) u ^\prime + c(z) u=0$.
Let us assume that this equation   admits
two linearly independent solutions $u_1(z), u_2(z)$, which are of autlos type in   some  neighborhood of the origin $z=0\in \mathbb C$.

The wronskian $W(u_1,u_2)(z)$ satisfies the first order ODE
$a(z)w^\prime+b(z)w=0$ and since it is given by $W(u_1,u_2)(z)=u_1^\prime(z)u_2(z)-u_1(z)u_2^\prime(z)$, it is also of autlos type in some neighborhood of the origin $z=0\in \mathbb C$.  Using then the above first order ODE and arguments similar to those in the proof of Lemma~\ref{Lemma:simplepole} we conclude that $b(z)/a(z)$ must have a pole of order $\leq 1$ at the origin, otherwise $W(u_1,u_2)(z)$ would have an essential singularity at the origin.
Following now a similar reasoning in the proof of Claim~\ref{Claim:poleordertwo} in the second part of the proof of
Theorem~\ref{Theorem:twoformalsolutionsconverge} we conclude that
$c(z)/a(z)$ must have a pole of order $\leq 2$ at the origin.
This shows that the singularity at the origin is regular, or the origin is an ordinary point.
If we start with a real analytic ODE then we consider its complexification. The fact that there are two linearly independent solutions of autlos type for the original ODE implies that there are two linearly independent solutions for the corresponding complex  ODE, by definition these solutions will be of autlos type. Once we have concluded that the complex ODE has a regular singularity or an ordinary point at the origin, the same holds for the original real analytic ODE.
Thus $(2)\implies(3)$.
The classical Frobenius theorem shows that $(3)\implies(1)$. Finally, it is clear from the definitions that
$(1)\implies(2)$.
\end{proof}

The next examples show how sharp is the statement of Theorem~\ref{Theorem:characterizationregularpointordertwo}.

\begin{Example}{\rm Consider the equation
\begin{equation}\label{edo4}
z^3u^{\prime\prime}-zu^\prime+u=0.
\end{equation}
The origin $z_0=0$ is a singular point, but not is regular singular point, since if we multiply equation (\ref{edo4}) according to $1/z$ we have

$$z^2u^{\prime\prime}-u^\prime+\frac{1}{z}u=0$$

and since the coefficient $-1$ of $u^\prime$ does not have the form $zb(z)$, where $b$ is holomorphic for $0$.
It is easy to see that $u_1=z$ is a solution of  equation (\ref{edo4}). Making use of the method of reduction of order we can construct a second solution $u_2$ linearly independent with $u_1$. Hence we have that
$$u_2(z)=z\int^z\big(\frac{\exp\big(-\int^w \frac{-v}{v^3}dv\big)}{w^2}\big)dw=z\int^z\big(\frac{\exp\big(\int^w \frac{1}{v^2}dv\big)}{w^2}\big)dw $$
$$u_2(z)=z\int^z\big(\frac{\exp\big(-\frac{1}{w}\big)}{w^2}\big)dw =z\exp\big(-\frac{1}{z}\big).$$
Note that $u_2$ is not holomorphic.}
\end{Example}

\begin{Remark}{\rm Consider a second order differential equation of the form
\begin{equation}\label{charquasecase1}
z^3a(z)u^{\prime\prime}+z^2b(z)u^\prime+c(z)u=0
\end{equation}
where $a,b,c$ are holomorphic at the origin with $a(0)\neq0$ and $c(0)\neq0$. We shall see that (\ref{charquasecase1}) admits no formal solution. Indeed we assume that
$u(z)=\displaystyle\sum^\infty_{n=0}d_nz^n$ is a formal solution of (\ref{charquasecase1}) hence
$$z^3\big(\sum^\infty_{n=0}a_nz^n\big)\big(\sum^{\infty}_{n=2}n(n-1)d_nz^{n-2}\big)+z^2\big(\sum^\infty_{n=0}b_nz^n\big)\big(\sum^\infty_{n=1}nd_nz^{n-1}\big)+\big(\sum^\infty_{n=0}c_nz^n\big)\big(\sum^\infty_{n=0}d_nz^n\big)=0$$
$$\big(\sum^\infty_{n=0}a_nz^n\big)\big(\sum^{\infty}_{n=3}(n-1)(n-2)d_{n-1}z^{n}\big)+\big(\sum^\infty_{n=0}b_nz^n\big)\big(\sum^\infty_{n=2}(n-1)d_{n-1}z^{n}\big)+\big(\sum^\infty_{n=0}c_nz^n\big)\big(\sum^\infty_{n=0}d_nz^n\big)=0$$
$$\sum^\infty_{n=0}\tilde{a}_nz^n+\sum^\infty_{n=0}\tilde{b}_nz^n+\sum^\infty_{n=0}\tilde{c}_nz^n=0$$
where $$\tilde{a}_0=\tilde{a}_1=\tilde{a}_2=0,\;\tilde{a}_n=\sum^{n}_{k=3}a_{n-k}(k-1)(k-2)d_{k-1}\;\;\mbox{ for }n\geq 3,$$
 $$\tilde{b}_0=\tilde{b}_1=0,\;\tilde{b}_n=\sum^{n}_{k=2}b_{n-k}(k-1)d_{k-1}\mbox{ for }n\geq 2$$ and $$\tilde{c}_n=\sum^{n}_{k=0}c_{n-k}d_k\;\;\mbox{ for }n\geq 0.$$Hence we have
 $$\tilde{a}_n+\tilde{b}_n+\tilde{c}_n=0,\;\;\mbox{ for all }n\geq0.$$
For $n=0$ we have: $c_0d_0=0$ and since $c_0\neq0$ then $d_0=0$.

For $n=1$ we have: $c_1d_0+c_0d_1=0$ and then  $c_0d_1=0$ and since $c_0\neq0$ then $d_1=0$.

For $n=2$ we have: $b_0d_1+c_2d_0+c_1d_1+c_0d_2=0$ and then  $c_0d_2=0$ and since $c_0\neq0$ then $d_2=0$.

For $n=3$ we have: $2a_0d_2+b_1d_1+2b_0d_2+c_3d_0+c_2d_1+c_1d_2+c_0d_3=0$ and then  $c_0d_3=0$ and since $c_0\neq0$ then $d_3=0$.

For $n\geq4$ we have:
$$c_0d_n=-\sum^n_{k=4}[(k-1)(k-2)a_{n-k}+(k-1)b_{n-k}+c_{n-k+1}]d_{k-1}$$and since $c_0\neq0$ then we have $d_n=0$ for all $n\geq0$. Thence there exist no  non trivial formal solution. Observe that there exists
 the limit
$$\displaystyle\lim_{x\to 0}\frac{x^3b(x)}{x^3a(x)}=\displaystyle\lim_{x\to 0}\frac{b_0+b_1x+\ldots}{a_0+a_1x+\ldots}=\frac{b_0}{a_0}$$e it does not exist the limit
$$\displaystyle\lim_{x\to 0}\frac{x^2c(x)}{x^3a(x)}=\displaystyle\lim_{x\to 0}\frac{c_0+c_1x+\ldots}{a_0x+a_1x^2+\ldots}.$$

}
\end{Remark}

\begin{Remark}{\rm Consider a second order differential equation of the form
\begin{equation}\label{charquasecase2}
z^2a(z)u^{\prime\prime}+b(z)u^\prime+c(z)u=0
\end{equation}
where $a,b,c$ are holomorphic at the origin with $a(0)\neq0$, $b(0)\neq0$ and $c(0)\neq0$. We shall see that (\ref{charquasecase2}) always admits non trivial formal solution. Indeed we assume that
$u(z)=\displaystyle\sum^\infty_{n=0}d_nz^n$ is a formal solution of (\ref{charquasecase2}) hence
$$z^2\big(\sum^\infty_{n=0}a_nz^n\big)\big(\sum^{\infty}_{n=2}n(n-1)d_nz^{n-2}\big)+\big(\sum^\infty_{n=0}b_nz^n\big)\big(\sum^\infty_{n=1}nd_nz^{n-1}\big)+\big(\sum^\infty_{n=0}c_nz^n\big)\big(\sum^\infty_{n=0}d_nz^n\big)=0$$
$$\big(\sum^\infty_{n=0}a_nz^n\big)\big(\sum^{\infty}_{n=2}n(n-1)d_nz^n\big)+\big(\sum^\infty_{n=0}b_nz^n\big)\big(\sum^\infty_{n=0}(n+1)d_{n+1}z^{n}\big)+\big(\sum^\infty_{n=0}c_nz^n\big)\big(\sum^\infty_{n=0}d_nz^n\big)=0$$
$$\sum^\infty_{n=0}\tilde{a}_nz^n+\sum^\infty_{n=0}\tilde{b}_nz^n+\sum^\infty_{n=0}\tilde{c}_nz^n=0$$
where $$\tilde{a}_0=\tilde{a}_1=0,\;\tilde{a}_n=\sum^{n}_{k=2}a_{n-k}k(k-1)d_k\;\;\mbox{ for }n\geq 2,$$
 $$\tilde{b}_n=\sum^{n}_{k=0}b_{n-k}(k+1)d_{k+1}\mbox{ for }n\geq 0$$ and $$\tilde{c}_n=\sum^{n}_{k=0}c_{n-k}d_k\;\;\mbox{ for }n\geq 0.$$Hence we have
 $$\tilde{a}_n+\tilde{b}_n+\tilde{c}_n=0,\;\;\mbox{ for all }n\geq0.$$
For $n=0$ we have: $b_0d_1+c_0d_0=0$ and since $b_0\neq0$ then $d_1=-\frac{c_0d_0}{b_0}$.

For $n=1$ we have: $b_1d_1+2b_0d_2+c_1d_0+c_0d_1=0$ and then  $2b_0d_2=-(b_1+c_0)d_1-c_1d_0$ and since $b_0\neq0$ then $d_2=-\big(\frac{b_1+c_0}{2b_0}\big)\big(\frac{-c_0d_0}{b_0}\big)-\frac{c_1d_0}{2b_0}=\frac{(b_1c_0+c_0^2+c_1b_0)d_0}{b_0^2}$.

For $n\geq2$ we have:
$$b_0(n+1)d_{n+1}=-c_nd_0-\sum^{n}_{k=1}[k(k-1)a_{n-k}+kb_{n-k+1}+c_{n-k}]d_k$$and since $b_0\neq0$ we obtain
$$d_{n+1}=\frac{1}{b_0(n+1)}\big(-c_nd_0-\sum^{n}_{k=1}[k(k-1)a_{n-k}+kb_{n-k+1}+c_{n-k}]d_k\big).$$
Observe that the coefficients of the series depend on $d_0$, since we look for non trivial formal solutions it suffices to choose $d_0\neq0$. Hence, there exist non trivial formal solution. Also note that there exists  the limit
$$\displaystyle\lim_{x\to 0}\frac{x^2c(x)}{x^2a(x)}=\displaystyle\lim_{x\to 0}\frac{c_0+c_1x+\ldots}{a_0+a_1x+\ldots}=\frac{c_0}{a_0}.$$
 and the following limit is not finite
$$\displaystyle\lim_{x\to 0}\frac{xb(x)}{x^2a(x)}=\displaystyle\lim_{x\to 0}\frac{b_0+b_1x+\ldots}{a_0x+a_1x^2+\ldots}.$$
}
\end{Remark}
\begin{Example} {\rm Consider a second order differential equation given by
\begin{equation}\label{charquase3}
z^2u^{\prime\prime}+bu^\prime+cu=0
\end{equation}
where $b$ and $c$ are nonzero constants. Observe that the origin is a non regular singular point of (\ref{charquase3}). Next we shall see  that there exist  non trivial formal solutions for (\ref{charquase3}). Let us assume that
\begin{equation}\label{charquase4}
u(z)=\sum^\infty_{n=0}a_nz^n
\end{equation}
is a non trivial formal solution of (\ref{charquase3}). Hence we have

$$z^2\big(\sum^{\infty}_{n=2}n(n-1)a_nz^{n-2}\big)+b\big(\sum^\infty_{n=1}na_nz^{n-1}\big)+c\sum^\infty_{n=0}a_nz^n=0$$

$$(ca_0+ba_1)+(ca_1+2ba_2)z+\sum^{\infty}_{n=2}\big([n(n-1)+c]a_n+b(n+1)a_{n+1}\big)z^{n}=0$$
and then  $ca_0+ba_1=0$, $ca_1+2ba_2=0$ and $(n^2-n+c)a_n+b(n+1)a_{n+1}=0$ for all $n=2,3,\ldots$. Since $b\neq0$  we have $a_1=-\frac{ca_0}{b}$, $a_2=-\frac{ca_1}{2b}=\frac{c^2a_0}{2b^2}$ and
\begin{equation}\label{charquase5}
a_{n+1}=-\frac{(n^2-n+c)a_n}{b(n+1)},\;\;\mbox{ for all }n=2,3,\ldots.
\end{equation}
Observe that the coefficients of the series depend on $a_0$, since we look for non trivial formal solutions it suffices to choose $a_0\neq0$. Hence, there exist non trivial formal solution. Observe now that  this formal solution is not convergent.  Applying the ratio test to the expressions (\ref{charquase4}) and (\ref{charquase5}), we have that
$$ \big|\frac{a_{n+1}z^{n+1}}{a_nz^n}\big|=\big|\frac{n^2-n+c}{b(n+1)}\big|\cdot| z|\to\infty,$$ when $n\to\infty$, whenever $|z|\neq0$. Hence, the series converges only for $z=0$.}
\end{Example}

\subsection{Riccati  model for a  second order linear ODE}

We shall now exhibit method of associating to a homogeneous linear second order ODE a
Riccati differential equation. Consider a second order ODE given by
$$a(z)u^{\prime\prime}+b(z)u^\prime+c(z)u=0$$
where  $a,b,c$ are analytic functions, real or complex, of a variable $z$ real or complex, defined in a domain $U\subset \mathbb R, \mathbb C$.
According to \cite{Reis} there is an integrable
one-form
$$\Omega=-a(z)ydx + a(z)xdy +[a(z)y^2+b(z)xy +c(z)x^2]dz$$  that vanishes at
the vector field corresponding to the reduction of order of the ODE, i.e.,
$\omega (X)=0$ where
$$X(x,y,z)=y\frac{\partial }{\partial x}-
\big(\frac{b(z)}{a(z)}y+\frac{c(z)}{a(z)}x\big)\frac{\partial }{\partial y}+\frac{\partial}{\partial z}.$$
As a consequence the orbits of $X$ are tangent to the foliation $\fa_\Omega$ given by the Pfaff equation
 $\Omega=0$.

First of all we remark that we can write  $\Omega$ as follows

$$\frac{\Omega}{x^2}=a(z)d\big(\frac{y}{x}\big) +\big[a(z)\big(\frac{y}{x}\big)^2+b(z)\big(\frac{y}{x}\big)
+c(z)\big]dz.$$

Thus, by introducing the variable  $t=\frac{y}{x}$ we see that the same foliation $\fa_\Omega$ can be defined
by the one-form $\omega$ below:

$$\omega=a(z)dt+[a(z)t^2+b(z)t+c(z)]dz. $$

By its turn $\omega=0$ defines a Riccati foliation which writes as
$$\frac{dt}{dz}=-\frac{a(z)t^2+b(z)t+c(z)}{a(z)}.$$

\begin{Definition}
{\rm The Riccati differential equation above is called {\it Riccati model} of the ODE
\, \, $a(z)u^{\prime\prime}+b(z)u^\prime+c(z)u=0$. }

\end{Definition}

\begin{Remark}
{\rm The Riccati model can be obtained in a less geometrically clear way by setting
$t=u^\prime/u$ as a new variable. Sometimes it is also useful to consider the change of variable
$w=u/u^\prime$ which leads to the Riccati equation $\frac{dw}{dz}=\frac{c(z) w^2 + b(z) w + a(z)}{a(z)}$.

}
\end{Remark}
\subsubsection{Holonomy of a second order equation}

It is well-known that a complex rational Riccati differential equation $\frac{dy}{dx}=
\frac{a(x)y^2 + b(x)y + c(x)}{p(x)}$
induces in the complex surface $\mathbb P^1 \times \mathbb P^1$ a foliation $\fa$
with singularities, having the following
characteristics:
\begin{enumerate}
\item The foliation has a finite number of invariant vertical lines $\{x_0\} \times \mathbb P^1$.
These lines are given by the zeroes of $p(x)$ and possibly by the line $\{\infty\}\times \mathbb P^1$.

\item For each non-invariant vertical line $\{x_0\}\times \mathbb P^1$ the foliation has its leaves
transverse to this line.

\item From Ehresmann we conclude that the restriction of $\fa$ to $(\mathbb P^1\setminus \sigma )\times \mathbb P^1$,
where $\sigma\times \mathbb P^1$ is the set of invariant vertical lines, is a foliation transverse to the fibers of
the fiber space $\mathbb P^1\times \mathbb P^1 \to \mathbb P^1$ with fiber $\mathbb P^1$ and projection given by
$\pi(x,y)=x$.

\item The restriction $\pi\big|_{L}$ of the projection to each leaf $L$ of the Riccati foliation
defines a covering map $L\to \mathbb P^1 \setminus \sigma$.
\end{enumerate}

In particular, there is a global holonomy map which is defined as follows:

choose any point $x_0 \not \in \sigma$ as base point and consider the lifting of the closed paths
$\gamma \in \pi_1(\mathbb P^1 \setminus \sigma)$ to each leaf $L\in \fa$ by the restriction
$\pi\big|_{L}$ above. Denote the lift of $\gamma$ starting at the point $(x_0,z) \in \{x_0\} \times
\mathbb P^1$ by $\tilde \gamma_z$. If the end point of $\tilde \gamma_z$ is denoted by $(x_0,h_\gamma(z))$
then the map $z \mapsto h_\gamma(z)$ depends only on the homotopy class of $\gamma \in \pi_1(\mathbb P^1 \setminus \sigma)$.
Moreover, this defines a complex analytic diffeomorphism $h_{[\gamma]}\in \Diff(\mathbb P^1)$ and the map
$\pi_1(\mathbb P^1\setminus \sigma) \to \Diff(\mathbb P^1), \, [\gamma] \mapsto h_{[\gamma]}$ is a group homomorphism.
The image is called {\it global holonomy} of the Riccati equation. It is well known from the theory of foliations
transverse to fiber spaces that the global holonomy classifies the foliation up to fibered conjugacy (\cite{C-LN}).
This will be useful to us in what follows.

Let us start by observing that $\Diff(\mathbb P^1)$ as meant above is the projectivization of the special
linear group ie., $\Diff(\mathbb P^1) = \mathbb PSL(2,\mathbb C)$ meaning that every global holonomy
map can be represented by a Moebius map $T(z)= \frac{a_1 z + a_2}{a_3 z + a_4}$ where $a_1, a_2, a_3, a_4\in \mathbb  C$
and $a_1 a_4 - a_2 a_3=1$. Thus the global holonomy group of a Riccati foliation identifies with a group of Moebius maps.

\begin{Definition}[holonomy of a second order ODE]
{\rm Given a linear homogeneous second order ODE with complex polynomial coefficients
$$a(z)u^{\prime\prime}+b(z)u^\prime+c(z)u=0$$ we call the {\it holonomy}
of the ODE the global holonomy group of the corresponding  Riccati model.
}
\end{Definition}
\begin{Remark}
{\rm As we have seen above we can also obtain a Riccati model by any of the changes of variables
$t=u^\prime/u$ or $w=u/u^\prime$. From the viewpoint of ODEs these models may seem distinct. Nevertheless,
they differ only up to the change of coordinates $t=1/w$. Moreover, both have the same global holonomy group, since the point at infinity is always considered in the definition of global holonomy group. Indeed, the ideal space for considering a Riccati equation from the geometrical viewpoint, is the space $\mathbb C \times \mathbb C$.}
\end{Remark}

Next we see a concrete example of the global holonomy group of a second order ODE:

\begin{Example}{\rm Consider the  equation given by
\begin{equation}\label{Riccatihol1}
z^2u^{\prime\prime}+u=0.
\end{equation}
From what we observed above we have that for $a(z)=z^2$, $b(z)=0$ and $c(z)=1$ there exist a Riccati equation given by
\begin{equation}\label{Riccatihol2}
 \frac{dt}{dz}=\frac{z^2+t^2}{z^2}.
\end{equation}
Observe that equation (\ref{Riccatihol2}) is homogeneous therefore by the change of coordinates $w=\frac{t}{z}$ we have
$$z\frac{dw}{dz}+w=1+w^2$$
$$\frac{dw}{dz}=\frac{w^2-w+1}{z}$$
 the last equation  may be written  into separated variables  and therefore we have
$$\frac{dw}{w^2-w+1}=\frac{dz}{z}$$
$$\frac{i}{\sqrt{3}}\frac{dw}{w+\frac{1+i\sqrt{3}}{2}}-\frac{i}{\sqrt{3}}\frac{dw}{w+\frac{1-i\sqrt{3}}{2}}=\frac{dz}{z} $$
 $$d\big( \frac{i}{\sqrt{3}} \log \big(\frac{w+\frac{1+i\sqrt{3}}{2}}{w+\frac{1-i\sqrt{3}}{2}}\big)\big)=d(\log z)$$
 and then
 $$\big(\frac{w+\frac{1+i\sqrt{3}}{2}}{w+\frac{1-i\sqrt{3}}{2}}\big)^{i/\sqrt{3}}=Kz$$where $K$ is constant. Thence
  $$\frac{2t+(1+i\sqrt{3})z}{2t+(1-i\sqrt{3})z}=\tilde{K}z^{-i\sqrt{3}}$$where $\tilde{K}$ is constant.  Hence
 $$t=\frac{\tilde{K}(1-i\sqrt{3})z^{1-i\sqrt{3}}-(1+i\sqrt{3})z}{2-2\tilde{K}z^{-i\sqrt{3}}}.$$
 Let us now compute the global holonomy with basis $t=0$. For this sake we take a loop  $z(\theta)=z_0e^{i\theta}$ with $0\leq\theta\leq 2\pi$. For $\theta=0$ we have that
 $$\tilde{K}=z_0^{i\sqrt{3}}\frac{2t_0+(1+i\sqrt{3})z_0}{2t_0+(1-i\sqrt{3})z_0}$$and then  we obtain that
   $$h(t_0)=t(z(2\pi))=\frac{\tilde{K}(1-i\sqrt{3})z_0^{1-i\sqrt{3}}e^{2\pi\sqrt{3}}-
   (1+i\sqrt{3})z_0}{2-2\tilde{K}z_0^{-i\sqrt{3}}e^{2\pi\sqrt{3}}}$$ and replacing $K$ we get
$$h(t_0)=t(z(2\pi))=z_0\frac{(t_0(1-i\sqrt{3})+2z_0)e^{2\pi\sqrt{3}}-t_0(1+i\sqrt{3})-2z_0}{2t_0+(1-i\sqrt{3})z_0-\big(2t_0+(1+i\sqrt{3})z_0\big)e^{2\pi\sqrt{3}}}.$$
 }
\end{Example}

\subsubsection{Trivial Holonomy}

Let us investigate some interesting cases.
First  consider a Riccati foliation $\fa$
assuming that  $\sigma$ is a single point. Thus  we may assume that in affine coordinates $(x,y)$
the ramification point is the point $x=\infty, y=0$. Then we may write $\fa$ as given by
a polynomial differential equation $\frac{dy}{dx}=a(x)y^2 + b(x)y + c(x)$.
The global holonomy of $\fa$ is given by an homomorphism $\phi\colon \pi(\mathbb P^1\setminus \sigma)
\to \Diff(\mathbb P^1)$. Since $\sigma$ is a single point we have
$\mathbb P^1\setminus \sigma =\mathbb C$ is simply-connected and therefore the global  holonomy is trivial.
By the classification of foliations transverse to fibrations (\cite{C-LN} Chapter V) there is a
fibered biholomorphic map $\Phi\colon  \mathbb C \times \mathbb P^1 \to
\mathbb C \times \mathbb P^1$ that takes the foliation $\fa$ into the foliation $\mathcal H $ given by
the horizontal fibers $\mathbb C \times \{y\}, y \in \mathbb P^1$.

\begin{Lemma}
\label{Lemma:aut}{\rm
A holomorphic diffeomorphism $\Phi \colon \mathbb C \times \mathbb P^1 \to
\mathbb C \times \mathbb P^1$ preserving the vertical fibration  writes
in affine coordinates $(x,y)\in \mathbb C^2 \subset \mathbb C \times \mathbb P^1$
as $\Phi(x,y)=\big( Ax+B, \frac{a(x)y+ b(x)}{c(x)y + d(x)}\big)$ where $a,b,c,d $
are entire functions satisfying $ad-bc=1$, $0 \ne A,B \in \mathbb C$.}
\end{Lemma}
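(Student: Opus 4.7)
The plan is to decompose $\Phi$ into its action on the base and its fiberwise action, and then to upgrade the fiberwise Möbius data from $\mathrm{PSL}(2,\mathbb{C})$ to $\mathrm{SL}(2,\mathbb{C})$ using simple-connectedness of the base.

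First, because $\Phi$ preserves the vertical fibration $\pi\colon \mathbb{C}\times\mathbb{P}^1 \to \mathbb{C}$, it induces a biholomorphism $\phi\colon \mathbb{C}\to\mathbb{C}$ with $\pi\circ\Phi=\phi\circ\pi$. Every biholomorphism of $\mathbb{C}$ is an affine map (the point at infinity is necessarily a removable singularity of $\phi$ viewed on $\mathbb{P}^1$, forcing $\phi$ to extend to a Möbius map fixing $\infty$), so $\phi(x)=Ax+B$ with $A\in\mathbb{C}^{*}$.

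Second, for each fixed $x$ the restriction $\Phi_{x}\colon \{x\}\times\mathbb{P}^1 \to \{\phi(x)\}\times\mathbb{P}^1$ is a biholomorphism of $\mathbb{P}^1$, hence a Möbius transformation. Identifying both fibers with $\mathbb{P}^1$, this yields a map $\Psi\colon \mathbb{C} \to \mathrm{PSL}(2,\mathbb{C})$ with $\Phi(x,y)=(\phi(x),\Psi(x)(y))$, and the joint holomorphy of $\Phi$ together with, say, evaluation at three distinct points of $\mathbb{P}^1$ shows that $\Psi$ is holomorphic.

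Third, and this is the main obstacle, one must produce globally defined entire coefficients $a,b,c,d$ with $ad-bc=1$ representing $\Psi$. Locally this is routine: around any point $x_0\in\mathbb{C}$ one picks a holomorphic lift of $\Psi$ to $\mathrm{SL}(2,\mathbb{C})$, unique up to sign. The ambiguity is precisely an element of $\ker(\mathrm{SL}(2,\mathbb{C})\to \mathrm{PSL}(2,\mathbb{C}))=\{\pm I\}$, so global lifts are classified by the monodromy representation $\pi_{1}(\mathbb{C})\to \mathbb{Z}/2\mathbb{Z}$. Since $\mathbb{C}$ is simply connected this monodromy is trivial and the local lifts patch into a holomorphic $\tilde\Psi\colon \mathbb{C}\to \mathrm{SL}(2,\mathbb{C})$. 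Writing $\tilde\Psi(x)=\bigl(\begin{smallmatrix} a(x) & b(x)\\ c(x) & d(x)\end{smallmatrix}\bigr)$ gives entire $a,b,c,d$ with $ad-bc\equiv 1$, and substitution into $\Phi(x,y)=(Ax+B,\Psi(x)(y))$ produces the claimed normal form.
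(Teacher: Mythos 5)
Your proof is correct, and it follows the same overall decomposition as the paper's: first show the induced base map is an affine automorphism of $\mathbb C$, then observe that each fiber map is a M\"obius transformation. The differences are in the details, and they are in your favor. For the base map the paper invokes Picard's theorem to get $f(x)=Ax+B$, while you argue via the non-essential singularity at infinity and the extension to a M\"obius map of $\mathbb P^1$ fixing $\infty$; these are two standard routes to the same classical fact that an injective entire function is affine. More substantively, the paper simply \emph{asserts} that the fiberwise M\"obius maps ``must write as $g(x,y)=\frac{a(x)y+b(x)}{c(x)y+d(x)}$ for some entire functions $a,b,c,d$ satisfying $ad-bc=1$,'' with no justification of either the holomorphic dependence on $x$ or the existence of a global $\mathrm{SL}(2,\mathbb C)$-valued representative. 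You supply exactly the two missing ingredients: holomorphy of the map $\Psi\colon\mathbb C\to\mathrm{PSL}(2,\mathbb C)$ (via evaluation at three points), and the lift of $\Psi$ to $\mathrm{SL}(2,\mathbb C)$ through the double cover $\mathrm{SL}(2,\mathbb C)\to\mathrm{PSL}(2,\mathbb C)$, which exists because $\mathbb C$ is simply connected so the $\mathbb Z/2\mathbb Z$ monodromy obstruction vanishes. This is the genuinely nontrivial point of the lemma, and your treatment of it is the part the paper's proof leaves to the reader. (Minor quibble: the singularity of $\phi$ at $\infty$ as a $\mathbb C$-valued function is a pole, not removable; what you mean, correctly, is that it is removable for $\phi$ viewed as a $\mathbb P^1$-valued map. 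Also, the normalization $B\neq 0$ in the statement cannot be forced --- the identity map has $B=0$ --- but that is an issue with the paper's statement, not with your argument.)
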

\begin{proof}[Proof of Lemma~\ref{Lemma:aut}]
 Picard's theorem and the fact that $\Phi$ preserves the fibration $x=const$ show
 that it is of the form $\Phi(x,y)=(f(x),g(x,y))$ where $f(x)=Ax+B$ is an affine map.
 Finally, for each fixed $x\in \mathbb C$ the map $\mathbb P^1 \ni y \mapsto g(x,y) \in \mathbb P^1$ is a
 diffeomorphism so it must write as $g(x,y)=\frac{a(x)y +b(x)}{c(x)y + d(x)}$ for some entire
 functions $a,b,c,d$ satisfying $ad - bc=1$.
\end{proof}

In particular we conclude that the leaves of  $\fa$ are diffeomorphic with $\mathbb C$
(including the one contained in the invariant fiber $\{(0,\infty)\}\times \mathbb P^1$,
and $\fa$ admits a holomorphic first integral $g\colon \mathbb C \times \mathbb P^1 \to \mathbb P^1$
of the above form $g(x,y)=\frac{a(x)y +b(x)}{c(x)y + d(x)}$.

Let us now apply this to our framework of second order linear ODEs.

\begin{proof}[Proof of Theorem~\ref{Theorem:Riccatitrivialholonomy}]
Beginning with the ODE $a(z)u^{\prime\prime}+b(z)u^\prime+c(z)u=0$ the Riccati model is
$$\frac{dt}{dz}=-\frac{a(z)t^2+b(z)t+c(z)}{a(z)}.$$
Thus if we assume that $a(z)=1$ then we have for this Riccati equation that $\sigma=\{\infty\}$ as considered above.
This implies that $\fa$ admits a holomorphic first integral $g\colon \mathbb C \times \mathbb P^1 \to \mathbb P^1$
of the above form $g(z,t)=\frac{A(z)t +B(z)}{C(z)t + D(z)}$. Given a leaf $L$ of the Riccati
 foliation there is a constant $\ell \in \mathbb P^1$  such that $g(z,t)=\ell$ for all $(t,z)\in L$. Hence
 $t=\frac{\ell D(z) - B(z)}{A(z) - \ell C(z)}$ for all $(t,z)\in L$. This defines a {\it meromorphic} parametrization
 $z\mapsto t(z)$ of the leaf. Since we have $t=\frac{y}{x}=\frac{u^\prime}{u}$
 therefore $u(z)=k \exp\big(\int_0^z t(\xi)d\xi\big)$
 is a solution of the ODE with $k \in \mathbb C$ a constant. This gives
 $$
u_{\ell,k}(z)=k \exp\big(\int_0^z \frac{\ell D(\xi) - B(\xi)}{A(\xi) - \ell C(\xi)}d\xi\big), \, k , \ell \in \mathbb C;
$$
 as general solution of the original ODE.
Notice that $\frac{u^\prime _{\ell,k}(z)}{u_{\ell,k}(z)}=\frac{\ell D(z) - B(z)}{A(z) - \ell C(z)}$ so that
if $\ell_1 \ne \ell_2$ then the corresponding solutions $u_{\ell_1,k_1}$ and $u_{\ell_2,k_2}$ generate a nonzero wronskian,
and therefore they are linearly independent solutions for all $k_1 \ne 0 \ne k_2$.
\end{proof}

Next we investigate the case where $\sigma$ consists of two points. In this case the holonomy group
of the ODE is cyclic generated by a single Moebius map. A first (regular singularity type) example is given below:

\begin{Example}[Bessel equation]\label{besselriccati}{\rm
Consider the complex Bessel equation given by
$$z^2u^{\prime\prime}+zu^\prime+(z^2-\nu^2)u=0$$ where  $z,\nu\in\mathbb{C}$.
Since  $a(z)=z^2$, $b(z)=z$ and $c(z)=z^2-\nu^2$ the corresponding Riccati model is
 $$\frac{dt}{dz}=-\frac{z^2t^2+zt+z^2-\nu^2}{z^2}$$
If we change coordinates to  $w=\frac{1}{z}$ then we obtain a Riccati equation
of the form
 $$\frac{dt}{dw}=\frac{t^2+tw+1-\nu^2w^2}{w^2}.$$
}
\end{Example}

A non-regular singularity example is given below:

\begin{Example}
{\rm
Let us consider the following polynomial ODE
$$
z^n  u^{\prime\prime} + b(z) u ^\prime + c(z) u =0.
$$
If $n \geq 2$ and $b(0)\ne 0$ or if $n \geq 3$ and $c(0)\ne 0$ or $b(0).b^\prime(0)\ne 0$ then
$z=0$ is a non-regular singular point. Let us assume that this is the case. The corresponding Riccati equation is
$$
\frac{dt}{dz}=-\frac{z^nt^2 + b(z)t + c(z)}{z^n}.
$$
Changing coordinates $w=1/z$ we obtain
$$
\frac{dt}{dw}=\frac{ t^2 + w^n b(1/w) t + w^n c(1/w)}{w^2}=\frac{w^kt^2 + \tilde b(w) t + \tilde c(w)}{w^{2 + k}}
$$
for some polynomials $\tilde b (w), \tilde c(w)$ and some $ k \in \mathbb N$.
This shows that the ramification set $\sigma\subset \mathbb P^1$ consists of the points $z=0$ and $z=\infty$.
The fundamental group of the basis $\mathbb P^1 \setminus \sigma$ is therefore cyclic generated by a single homotopy class.
The holonomy of the ODE is then generated by a single Moebius map.
}
\end{Example}

The following is an example with a holonomy group generated by two Moebius maps.

\begin{Example}[Legendre equation]\label{legendrericcati}{\rm
Consider the equation of Legendre given by
$$(1-z^2)u^{\prime\prime}-2zu^\prime+\alpha(\alpha+1)u=0$$where $\alpha\in\mathbb{C}$. From what we observed above we have that for $a(z)=1-z^2$, $b(z)=-2z$ and $c(z)=\alpha(\alpha+1)$ there exists a Riccati equation given by
 $$\frac{dt}{dz}=-\frac{(1-z^2)t^2-2zt+\alpha(\alpha+1)}{1-z^2}.$$
 Putting  $w=\frac{1}{z}$ we have
 $$\frac{dt}{dw}=\frac{(w^2-1)t^2-2wt+\alpha(\alpha+1)w^2}{w^2(w^2-1)}.$$
}
\end{Example}

\begin{Example}[an equation without solutions]
\label{Riccatisemsolformal}{\rm Let us consider the following
 Riccati equation
 $$\frac{du}{dz}=-\frac{zu^2+u+z}{z}$$
 which was obtained from Besssel equation  (Example~\ref{besselriccati})
 for $\nu=0$. Rewriting this equation we have
 \begin{equation}\label{edo9}
 zu^\prime(z)=-zu^2(z)-u(z)-z
\end{equation}

\begin{Claim}
Equation (\ref{edo9}) admits non-trivial formal solution.
\end{Claim}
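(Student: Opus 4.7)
The plan is to substitute a generic formal power series $u(z)=\sum_{n=0}^\infty a_n z^n$ into \eqref{edo9} and extract a recurrence for the coefficients $a_n$, then show that this recurrence is solvable and determines a non-trivial series.

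First I would compute each term of $zu'(z)+u(z)+zu(z)^2+z=0$ as a formal power series. We have
\begin{align*}
zu'(z)&=\sum_{n=0}^\infty n a_n z^n,\\
u(z)&=\sum_{n=0}^\infty a_n z^n,\\
z u(z)^2&=\sum_{n=1}^\infty\Bigl(\sum_{k=0}^{n-1} a_k a_{n-1-k}\Bigr)z^n.
\end{align*}
Substituting into \eqref{edo9} and collecting the coefficient of $z^n$ yields $a_0=0$ at order zero, $2a_1+a_0^2+1=0$ (so $a_1=-\tfrac12$) at order one, and for $n\ge 2$ the recurrence
\[
(n+1)a_n \;=\; -\sum_{k=0}^{n-1} a_k\,a_{n-1-k}.
\]

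Since the right-hand side depends only on $a_0,\dots,a_{n-1}$ and the coefficient $(n+1)$ is non-zero for every $n\ge 0$, this recurrence determines $a_n$ uniquely for all $n\ge 2$, starting from $a_0=0,\;a_1=-\tfrac12$. In particular, the formal series
\[
\hat u(z)=-\tfrac12\,z+\sum_{n=2}^\infty a_n z^n
\]
obtained in this way is a non-trivial formal solution of \eqref{edo9}, proving the claim.

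There is no real obstacle in the argument: the only thing to check is that the coefficient $(n+1)$ never vanishes, which guarantees the recurrence is always solvable. One remark worth making (to anticipate the likely next step in the paper) is that although the formal solution $\hat u$ exists, we should not expect it to converge, because \eqref{edo9} was derived from a Bessel-type equation with an irregular singular point at the origin after the change $t=u'/u$; the growth of the $a_n$ from the quadratic convolution on the right-hand side is exactly what produces a divergent (Gevrey) series.
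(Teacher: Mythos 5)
Your computation is correct and follows exactly the same route as the paper: substitute $u=\sum a_nz^n$, obtain $a_0=0$, $2a_1+c_0+1=0$ and $(n+1)a_n+c_{n-1}=0$ for $n\ge 2$ (with $c_m=\sum_{j=0}^m a_ja_{m-j}$), and observe that the recurrence is uniquely solvable because the coefficient $n+1$ never vanishes. Be aware, however, that the paper's own proof ends with the sentence ``Hence $a_n=0$ for all $n=0,1,2,\ldots$'' and the surrounding example is titled ``an equation without solutions''; both are in error, since $c_0=a_0^2=0$ forces $a_1=-\tfrac12\neq0$, exactly as you found. Your resolution is the correct one; note also that the zero series is not even a solution here because of the inhomogeneous term $-z$, so the unique formal solution produced by the recurrence is automatically non-trivial.

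Your closing heuristic about divergence is wrong, though. The underlying second-order equation is $zu''+u'+zu=0$, the Bessel equation of order zero, whose origin is a \emph{regular} (not irregular) singular point with indicial equation $r^2=0$; Frobenius gives an analytic solution with nonzero constant term (namely $J_0$), so $t=u'/u=J_0'(z)/J_0(z)$ is analytic at $0$ and its Taylor expansion, which begins $-\tfrac12 z+\cdots$, is precisely the formal series you constructed. That series therefore converges in a neighborhood of the origin; there is no Gevrey divergence coming from the quadratic convolution in this example.
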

\begin{proof}
Indeed, let us assume that $u(z)=\sum^\infty_{n=0}a_nz^n$ is a formal solution of (\ref{edo9}). Hence

  $$\sum^{\infty}_{n=1}na_nz^{n}=-z\big(\sum^\infty_{n=0}c_nz^n\big)-\sum^\infty_{n=0}a_nz^n-z$$
  where $c_n=\sum^{n}_{j=0}a_{n-j}a_j$. Thus we have
    $$\sum^{\infty}_{n=2}[(n+1)a_n+c_{n-1}]z^{n}+(2a_1+c_0+1)z+a_0=0$$
 then $a_0=0$, $2a_1+c_0+1=0$ and $(n+1)a_n+c_{n-1}$ for all $n=2,3,\ldots$.  Hence $a_n=0$ for all $n=0,1,2,\ldots$.
 \end{proof}

The corresponding ODE may be written as $zu^{\prime \prime} + u^\prime+zu=0$ and in the Euler form
it is $z^2u^{\prime \prime} +  zu ^\prime + z^2u=0$. This last has indicial equation $r(r-1) +r=0$ which
gives as only solution $r=0$. Then Frobenius theorem assures the existence of a solution of the form
$u(z)=z^0 \sum\limits_{n=0}^\infty a_n z^n$.
}
\end{Example}



\begin{Remark}{\rm Consider a second order ordinary differential equation given by
\begin{equation}\label{edo10}
a(z)u^{\prime\prime}+b(z)u^\prime+c(z)u=0
\end{equation}
where $a,b,c$  are polynomials. We shall see that (\ref{edo10}) is invariant under Moebius transformations. Indeed,
by the change of coordinates $z=\frac{\alpha w+\beta}{\gamma w+\delta}$ with $\alpha \delta-\beta\gamma=1$ and considering $\tilde{\varphi}(w)=\varphi\big(\frac{\alpha w+\beta}{\gamma w+\alpha}\big)$ where $\varphi$ is a solution of (\ref{edo10}). Taking derivative we have
$$\varphi^\prime\big(\frac{\alpha w+\beta}{\gamma w+\alpha}\big)=(\gamma w+\delta)^2\tilde{\varphi}^\prime(w)$$
$$\varphi^{\prime\prime}\big(\frac{\alpha w+\beta}{\gamma w+\alpha}\big)=(\gamma w+\delta)^4\tilde{\varphi}^{\prime\prime}(w)+2\gamma(\gamma w+\delta)^3\tilde{\varphi}^\prime(w).$$
Given that, by (\ref{edo10})
\begin{equation}\label{edo11}
a\big(\frac{\alpha w+\beta}{\gamma w+\delta}\big)\varphi^{\prime\prime}\big(\frac{\alpha w+\beta}{\gamma w+\delta}\big)+b\big(\frac{\alpha w+\beta}{\gamma w+\delta}\big)\varphi^\prime\big(\frac{\alpha w+\beta}{\gamma w+\delta}\big)+c\big(\frac{\alpha w+\beta}{\gamma w+\delta}\big)\varphi\big(\frac{\alpha w+\beta}{\gamma w+\delta}\big)=0.
\end{equation}
 Given that $a,b,c$  are polynomials we have
$$a\big(\frac{\alpha w+\beta}{\gamma w+\delta}\big)=\frac{\tilde{a}(w)}{(\gamma w+\delta)^n},\;\;b\big(\frac{\alpha w+\beta}{\gamma w+\delta}\big)=\frac{\tilde{b}(w)}{(\gamma w+\delta)^m},\;\;c\big(\frac{\alpha w+\beta}{\gamma w+\delta}\big)=\frac{\tilde{c}(w)}{(\gamma w+\delta)^p}$$
where $\tilde{a},\tilde{b},\tilde{c}$  are polynomials and $n,m,p\in\mathbb{N}$.

Hence back to equation (\ref{edo11}) we obtain
$$(\gamma w+\delta)^{m+p+4}\tilde{a}(w)\tilde{\varphi}^{\prime\prime}(w)+[2\gamma(\gamma w+\delta)^{m+p+3} \tilde{a}(w)+(\gamma w+\delta)^{n+p+2}\tilde{b}(w)]\tilde{\varphi}^{\prime}(w)+(\gamma w+\delta)^{m+n}\tilde{c}(w)\tilde{\varphi}(w)=0 $$
Hence $\tilde{\varphi}$ satisfies equation:
\begin{equation}\label{edo12}
\hat{a}(w)u^{\prime\prime}(w)+\hat{b}(w)u^{\prime}(w)+\hat{c}(w)u(w)=0.
\end{equation}
where $$\hat{a}(w)=(\gamma w+\delta)^{m+p+4}\tilde{a}(w),\;\;\;\tilde{\tilde{b}}(w)=2\gamma(\gamma w+\delta)^{m+p+3} \tilde{a}(w)+(\gamma w+\delta)^{n+p+2}\tilde{b}(w)$$ and $$\tilde{\tilde{c}}(w)=(\gamma w+\delta)^{m+n}\tilde{c}(w)$$ are polynomials. Hence equation (\ref{edo11}) is transformed by a Moebius map into equation  (\ref{edo12}).}
\end{Remark}

\begin{Example}{\rm Consider the equation given by
\begin{equation}\label{edo13}
u^{\prime\prime}-zu^\prime-u=0.
\end{equation}
From what we have observed above we known that for $a(z)=1$, $b(z)=-z$ and $c(z)=-1$ there exists a
Riccati equation given by
\begin{equation}\label{edo14}
 \frac{dt}{dz}=1+zt-t^2.
\end{equation}
It is not difficult to see that  $t=z$ is a solution of the differential Riccati model. We assume that  $t=s+z$ is a solution of (\ref{edo14}) and then  we obtain a Bernoulli equation
 \begin{equation}\label{edo15}
 \frac{ds}{dz}=-sz-s^2
 \end{equation}
 by the change of coordinates $v=s^{-1}$ in equation (\ref{edo15}) we have equation
$$ \frac{dv}{dz}=vz+1. $$
  Thus $v$ is of the form
  $$v=\exp\big(\frac{z^2}{2}\big)\big(A+\int^z \exp\big(-\frac{\eta^2}{2}\big)d\eta\big)$$
where $A$ is constant. Hence
$$ s=\frac{\exp\big(-\frac{z^2}{2}\big)}{A+\int^z \exp\big(-\frac{\eta^2}{2}\big)d\eta}$$ and consequently

$$t=\frac{\exp\big(-\frac{z^2}{2}\big)}{A+\int^z \exp\big(-\frac{\eta^2}{2}\big)d\eta}+z.$$
In the construction of the Riccati equation associate to (\ref{edo13}) it is considered that
$t=\frac{u^\prime}{u}$ where
$u$ is a solution of (\ref{edo13}). Hence we have
$$\frac{u^\prime}{u}=\frac{\exp\big(-\frac{z^2}{2}\big)}{A+\int^z \exp\big(-\frac{\eta^2}{2}\big)d\eta}+z$$
that may be written  as

$$(\log u)^\prime =\big(\log\big(A+\int^z \exp\big(-\frac{\eta^2}{2}\big)d\eta \big)+\frac{z^2}{2}  \big)^\prime.$$

Hence
$$u(z)=K\exp\big(\frac{z^2}{2}\big) \big(A+\int^z \exp\big(-\frac{\eta^2}{2}\big)d\eta \big)$$where $K$ is constant. It is a straightforward computation to show that $u$ is a solution of (\ref{edo13}).
}
\end{Example}

\subsection{Examples and counterexamples}

We start with an example.
\begin{Example}\label{2exeminf} {\rm Consider the equation
\begin{equation}\label{1eqs1}
x^2y^{\prime\prime}-y^\prime-\frac{1}{2}y=0.
\end{equation}
The origin $x_0=0$ is a singular point, but not is regular singular point, since the coefficient -1 of $y^\prime$ does not have the form $xb(x)$, where $b$ is analytic for $0$. Nevertheless, we can formally solve this equation by power series $
\sum^{\infty}_{k=0} a_kx^k$,
where the coefficients $a_k$ satisfy the following recurrence formula
$(k+1)a_{k+1}=\big[k^2-k-\frac{1}{2}\big]a_k,\;\;\;\mbox{ for every }k=0,1,2,\ldots.$
If $a_0\neq0$, applying the quotient test to this expression we have that
$$ \big|\frac{a_{k+1}x^{k+1}}{a_kx^k}\big|=\big|\frac{k^2-k-\frac{1}{2}}{k+1}\big|\cdot|x|\to\infty,$$when $k\to\infty$, provided that $|x|\neq0$. Hence, the series converges  only for $x=0$, and therefore does not represent a function in a neighborhood of $x=0$.}
\end{Example}

\subsection{Liouvillian solutions}

In this section we shall refer to the notion of Liouvillian function as introduced in \cite{Singer}.
We stress the fact that the generating basis field is the one of rational functions. Thus
a Liouvillian function of $n$ complex variables $x_1,\ldots,x_n$ will be a function belonging to
a Liouvillian tower of differential extensions $k_0\subset k_1\subset \cdots\subset k_r$ starting
the field $k_0$ of rational functions $k_0=\mathbb C(x_1,\ldots,x_n)$ equipped with the
partial derivatives $\frac{\partial}{\partial  x_j}$.

Recall that a Liouvillian function is always holomorphic in some Zariski open subset of the
space $\mathbb C^n$. Nevertheless, it may have several branches. Let us denote by
$Dom(F)\subset \mathbb C^n$ the {\it domain of $F$} as the biggest open subset where
$F$ has local holomorphic branches. This allows the following definition:
\begin{Definition}[Liouvillian solution, Liouvillian first integral, Liouvillian relation]
{\rm Given an equation $a(z) u^{\prime \prime}  + b(z) u^\prime + c(z) u =0$, a
Liouvillian function $u(z)$ of the variable $z$ will be called {\it a solution}
of the ODE if we have $a(z) u^{\prime \prime} + b(z) u^\prime + c(z) u=0$
in some nonempty open subset where $u(z)$ is holomorphic.
A three variables Liouvillian function $F(x_1,x_2,x_3)$ will be called {\it a first integral}
of the ODE if given any local solution $u_0(z)$ of the ODE, defined for $z$ in a disc $D(z_0,r)\subset \mathbb C$,  we have that $F(z,u_0(z),u_0^\prime(z))$ is constant for $|z-z_0|<r$ provided that $(z,u_0(z),u_0^\prime(z))\subset Dom(F),$ for all $z \in D(z_0,r)$. Similarly we shall say that a solution
$u_0(z)$ of the ODE, defined for $z \in Dom(u_0)\subset \mathbb C$, {\it satisfies a Liouvillian relation}
if there is a Liouvillian function $F(x_1,x_2,x_3)$ such that $\{(z,u(z),u^\prime(z))\in \mathbb C^3, z \in Dom(u)\}\cap Dom(F) \ne \emptyset$ and $F(z,u_0(z),u_0^\prime(z))=0$ in some dense open subset of $Dom(u_0)$.
}
\end{Definition}

Let us recall a couple of classical results:

\begin{Theorem}[Singer, \cite{Singer}]
\label{Theorem:singer}
Assume that the polynomial first order ODE $\frac{dx}{dz}=P(x,y), \, \frac{dy}{dz}=Q(x,y)$
admits a Liouvillian first integral. Then there are rational functions $U(x,y), \, V(x,y)$
such that $\frac{\partial U}{\partial y}=\frac{\partial V}{\partial x}$ and the differential
form $Q(x,y)dx - P(x,y)dy$ admits the integrating factor $R(x,y)=\exp\big[\int_{(x_0,y_0)}^{(x,y)}
U(x,y) dx + V(x,y)dy\big]$.

\end{Theorem}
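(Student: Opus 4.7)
The plan is to follow Singer's original strategy via differential algebra. Set $\partial = P\,\partial_x + Q\,\partial_y$, so that the foliation induced by $\omega = Q\,dx - P\,dy$ consists of the integral curves of $\partial$. A Liouvillian first integral is an element $F$ belonging to a Liouvillian differential extension $k_0 = \mathbb{C}(x,y) \subset k_1 \subset \cdots \subset k_r$, where each step is algebraic, an integral ($t_{i+1}' \in k_i$), or an exponential of an integral ($t_{i+1}'/t_{i+1} \in k_i$), and satisfying $\partial F = 0$ on the open set where $F$ is defined. The goal is to descend information about $F$ to the base field so as to produce the rational $U,V$ claimed.

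First I would invoke Rosenlicht's structure theorem for Liouvillian elements annihilated by a derivation: if $F \in k_r$ satisfies $\partial F = 0$ and is not already in the algebraic closure of $k_0$, then there exist constants $c_1,\dots,c_m \in \mathbb{C}$ and nonzero elements $v_0, v_1, \dots, v_m$ lying in some \emph{algebraic} extension $K$ of $k_0$ such that
\[
\frac{\partial v_0}{v_0} + \sum_{i=1}^m c_i \frac{\partial v_i}{v_i} = 0
\qquad \text{(equivalently, } dv_0/v_0 + \sum c_i\, dv_i/v_i \text{ is proportional to } \omega\text{)}.
\]
This reduces the problem from the full Liouvillian tower to a single algebraic extension $K/k_0$, where we have an explicit 1-form $\eta = dv_0/v_0 + \sum c_i\, dv_i/v_i$ defined over $K$ that is closed and whose wedge with $\omega$ vanishes.

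Next I would descend from $K$ to $k_0 = \mathbb{C}(x,y)$ by averaging over the Galois group $G$ of $K/k_0$. Writing $\eta = A\,dx + B\,dy$ with $A,B$ algebraic over $\mathbb{C}(x,y)$, set
\[
U = \frac{1}{|G|} \sum_{\sigma \in G} \sigma(A), \qquad V = \frac{1}{|G|} \sum_{\sigma \in G} \sigma(B).
\]
These traces are rational since they are $G$-invariant, they inherit closedness $\partial U/\partial y = \partial V/\partial x$ because each Galois conjugate is closed, and the relation $(U\,dx + V\,dy) \wedge \omega = -\bigl(\partial Q/\partial y + \partial P/\partial x\bigr)\,dx\wedge dy$, which is exactly the condition for $R = \exp\int U\,dx + V\,dy$ to be an integrating factor of $\omega$, is preserved under the Galois averaging because $\omega$ itself is Galois invariant (it is defined over $k_0$).

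The main obstacle I expect is the Rosenlicht reduction step: extracting, from an abstract Liouvillian $F$ with $\partial F = 0$, the explicit closed logarithmic-derivative form defined over an algebraic extension of $\mathbb{C}(x,y)$. This requires a careful induction on the tower length, handling separately the three types of extensions (algebraic, integral, exponential) and showing that in the exponential and integral cases the constraint $\partial F = 0$ forces $F$ to already live in a smaller subfield unless certain $\mathbb{C}$-linear relations among the logarithmic derivatives hold. Once this structure is in place, the Galois descent and the verification that the resulting $R\omega$ is closed are essentially formal.
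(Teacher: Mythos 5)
The paper does not prove this statement at all: it is quoted, with attribution, from Singer's 1992 paper, so your proposal can only be judged as a reconstruction of Singer's argument. The global architecture you describe (induction on the Liouvillian tower, reduction to an algebraic extension, Galois trace to land in $\mathbb{C}(x,y)$) is indeed the right one, and your final descent step is sound as far as it goes: the integrating-factor identity $PU+QV=-(P_x+Q_y)$ together with $U_y=V_x$ is an affine system with coefficients in $\mathbb{C}(x,y)$, so averaging a solution over the Galois group of a Galois closure again gives a solution, now rational.

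The genuine gap is the ``Rosenlicht structure theorem'' you invoke, which is misstated in a way that the rest of the argument cannot repair. You assert that $\partial F=0$ for a Liouvillian $F$ produces algebraic $v_0,\dots,v_m$ and constants $c_i$ with $\frac{\partial v_0}{v_0}+\sum c_i\frac{\partial v_i}{v_i}=0$, i.e.\ a closed form $\eta=\frac{dv_0}{v_0}+\sum c_i\frac{dv_i}{v_i}$ with $\eta\wedge\omega=0$. That conclusion is strictly stronger than the hypothesis and is false: it would give a first integral $\log v_0+\sum c_i\log v_i$ of logarithmic Darboux type, whose polar divisor consists of invariant algebraic curves. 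Already $y'=xy$ (first integral $ye^{-x^2/2}$, integrating factor $-e^{-x^2/2}$, so $U=-x$, $V=0$ in Singer's conclusion) has $y=0$ as its only invariant algebraic curve, which would force $\eta=c\,dy/y$, and $c\,dy/y$ does not annihilate $\partial_x+xy\,\partial_y$. Your own last display exposes the inconsistency: there you (correctly, up to sign) require $(U\,dx+V\,dy)\wedge\omega=(P_x+Q_y)\,dx\wedge dy$, the \emph{inhomogeneous} equation $d\omega=\omega\wedge\eta$, which is incompatible with $\eta\wedge\omega=0$ unless $P_x+Q_y=0$. What the induction on the tower must actually deliver --- and what Singer proves, handling the algebraic, primitive and exponential cases by trace and partial-fraction arguments in the new transcendental --- is a closed $1$-form $\eta$, algebraic over $\mathbb{C}(x,y)$, satisfying $d\omega=\omega\wedge\eta$: the object being descended is the (logarithmic derivative of the) integrating factor, not a logarithmic first integral. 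With that corrected lemma your Galois averaging goes through; without it, the reduction step you yourself identify as the main obstacle does not get off the ground.
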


\begin{Theorem}[Rosenlitch, Singer]
Let $p(z), q(z)$ be Liouvillian functions and
$L(y)=y^{\prime \prime} + p(z) y ^\prime + q(z)y$. If $L(y)=0$ has
a Liouvillian first integral then all solutions are Liouvillian.
If $L(y)=0$ has a nontrivial Liouvillian solution, then this equation
has  a Liouvillian first integral.

\end{Theorem}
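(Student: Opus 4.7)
The plan is to treat the two implications separately, using the associated Riccati equation $t' = -t^{2} - p(z)t - q(z)$ (obtained by the substitution $t = y'/y$) as a bridge between Liouvillian solutions of $L(y)=0$ and Liouvillian first integrals. The key ingredients are the Liouvillian closure of the operations integration, exponentiation, rational combination, and algebraic extension, and the geometric construction of a first integral for the one-form $\Omega$ associated with $L(y)=0$ that appeared in the proof of Theorem~\ref{Theorem:twoformalsolutionsconverge}.

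For the direction \emph{nontrivial Liouvillian solution $\Rightarrow$ Liouvillian first integral}, I would use reduction of order. Given a nontrivial Liouvillian solution $y_{1}(z)$, substitute $y = y_{1} v$ into $L(y)=0$; the zeroth order term in $v$ vanishes because $L(y_{1})=0$, leaving a first order equation in $v'$. One integration yields explicitly
\[
v'(z) \;=\; \frac{1}{y_{1}(z)^{2}}\exp\!\Bigl(-\int p(z)\,dz\Bigr),
\]
which is Liouvillian. Another integration yields a Liouvillian $v$, and hence a second linearly independent Liouvillian solution $y_{2} = y_{1} v$. Following the construction in the proof of Theorem~\ref{Theorem:twoformalsolutionsconverge}, the function
\[
H(x,y,z) \;=\; \frac{x\,y_{1}'(z) - y\,y_{1}(z)}{x\,y_{2}'(z) - y\,y_{2}(z)}
\]
is a first integral of the integrable one-form $\Omega$ canonically attached to $L(y)=0$, and it is Liouvillian because it is a rational combination of Liouvillian functions.

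For the direction \emph{Liouvillian first integral $\Rightarrow$ all solutions Liouvillian}, let $H$ be a Liouvillian first integral. Passing through the projection $t = y/x$ (the geometric version of $t = y'/y$), one obtains a Liouvillian first integral $\tilde H(z,t)$ of the Riccati model $t' = -(t^{2} + p t + q)$. For a generic constant $c$, the equation $\tilde H(z,t) = c$ determines $t = t_{c}(z)$ as a function of $z$, and since algebraic extensions of Liouvillian fields stay Liouvillian, $t_{c}(z)$ is Liouvillian. Then
\[
y_{c}(z) \;=\; \exp\!\Bigl(\int t_{c}(z)\,dz\Bigr)
\]
is a Liouvillian solution of $L(y)=0$. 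Letting $c$ vary produces a one-parameter family; together with the multiplicative scaling of the exponential this yields two linearly independent Liouvillian solutions, so every solution in the two-dimensional solution space is Liouvillian.

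The main obstacle I anticipate is in the second implication: one must argue rigorously that the implicit resolution $\tilde H(z,t) = c$ for $t$ stays in the Liouvillian class. This rests on the algebraic closure property of Liouvillian fields (cf.\ \cite{Singer}), together with the fact that $\partial_{t}\tilde H$ is generically nonzero (otherwise $\tilde H$ would not be a genuine first integral of the Riccati equation, contradicting the hypothesis). Once this is settled, the linear independence of $y_{c_{1}}$ and $y_{c_{2}}$ for distinct generic values of $c$ follows because $t_{c_{1}} \neq t_{c_{2}}$ implies that the logarithmic derivatives $y'/y$ differ, forcing the Wronskian to be nonzero.
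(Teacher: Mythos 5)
First, a point of reference: the paper does not prove this statement at all; it is quoted as a classical result of Rosenlicht and Singer (the Corollary on pp.~674--675 of \cite{Singer}), so your attempt can only be measured against the classical arguments and against the machinery the paper develops around Theorem~\ref{Theorem:characterizationliouville}. Your first implication (nontrivial Liouvillian solution $\Rightarrow$ Liouvillian first integral) is correct and is essentially the classical reduction-of-order argument: $v'=y_1^{-2}\exp\big(-\int p\big)$ is Liouvillian because the Liouvillian class is closed under rational combinations, integration and exponentials of integrals, so a second independent Liouvillian solution $y_2=y_1v$ exists, and the ratio $H=\frac{xy_1'-yy_1}{xy_2'-yy_2}$ borrowed from the proof of Theorem~\ref{Theorem:twoformalsolutionsconverge} is indeed a Liouvillian first integral, since along any solution both numerator and denominator are constant multiples of the same Wronskian factor $\exp\big(-\int p\big)$.

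The second implication has a genuine gap at the step where you solve $\tilde H(z,t)=c$ for $t=t_c(z)$ and declare $t_c$ Liouvillian. The relation $\tilde H(z,t)=c$ is in general transcendental in $t$ (a Liouvillian first integral of a Riccati equation need not be algebraic in the fibre variable), so the closure of Liouvillian fields under \emph{algebraic} extensions does not apply; Liouvillian functions are not closed under functional inversion or under solving transcendental implicit equations, and this is exactly why this direction is the nontrivial one. The standard route -- the one the paper itself leans on in the polynomial-coefficient setting -- is Singer's integrating-factor theorem (Theorem~\ref{Theorem:singer}): a Liouvillian first integral forces an integrating factor $\exp\big(\int U\,dx+V\,dy\big)$ with $U,V$ rational, and the Lemma preceding the proof of Theorem~\ref{Theorem:characterizationliouville} then shows that the Riccati model either admits a rational solution $\gamma(z)$ (reducing it to a Bernoulli equation) or degenerates to a first-order linear equation; in either case the solutions of $L(y)=0$ are obtained from rational data by quadratures and exponentials, hence are Liouvillian. (For genuinely Liouvillian, non-rational coefficients $p,q$ as in the statement, one needs Rosenlicht's differential-algebraic version of this argument, or the Lie--Kolchin theorem from differential Galois theory.) You correctly flag this step as the main obstacle, but the proposed resolution does not overcome it.
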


\begin{Example}[Bernoulli ODEs]
{\rm
Recall that a {\it Bernoulli differential equation of power $1$} is one of the form
$\frac{dy}{dx}=\frac{a_1(x)y +a_2(x) y^2}{p(x)}$. If we perform a change of variables as
$(x,y) \mapsto (x,y^{k})$ then we obtain an equation of the form
$\frac{dy}{dx}= \frac{ y^{k+1} a(x) + y b(x)}{p(x)}$ which will be called a Bernoulli equation
of power $k$.

We prove the
 existence of a first integral for $\Omega=0$  of Liouvillian type. First
we observe that $\Omega=0$ can be given by
\[
(k-1) \frac{\Omega}{p(x)y^k} = (k-1) \frac{dy}{y^k} -
(k-1) \big(\frac{a(x)}{p(x)} - \frac{b(x)}{p(x)y^{k-1}}\big)dx = 0.
\]
 Let now
$f(x)$ be such that $\frac{f^\prime(x)}{f(x)} = (k-1)
\frac{b(x)}{p(x)}$ and let $g(x)$ be such that $g^\prime(x) =
-\frac{a(x)}{p(x)f(x)}\cdot(k-1).$ Then $\Omega=0$ can be given by
\[
(k-1)
\frac{dy}{y^k} - (k-1) \frac{a(x)}{p(x)}dx +
\frac{f^\prime(x)}{y^{k- 1}f(x)}\, dx = 0.
\]
 Therefore $F(x,y) = g(x)
- \frac{1}{f(x) y^{k-1}}$ defines a first integral for $\Omega=0$  which
is clearly  of  Liouvillian type.
}
\end{Example}

Before proving Theorem~\ref{Theorem:characterizationliouville}  we shall need a lemma:

\begin{Lemma}
Let $\frac{dy}{dx}=\frac{c(x) y^2 + b(x) y + a(x)}{a(x)}$ be a rational Riccati ODE,
where $a(x), b(x), c(x)$ are complex polynomials. Assume that there is a Liouvillian
first integral. Then we have the following possibilities:
\begin{enumerate}
\item The equation is linear of the form $a(x)y^\prime - b(x)y=a(x)$.
\item Up to a rational change of coordinates of the form $Y= y-A(x)/B(x)$, the equation is a Bernoulli equation $\frac{dY}{dx}= \frac{\tilde c(x) Y^2 + \tilde b(x) Y}{\tilde a(x)}$.
\end{enumerate}

\end{Lemma}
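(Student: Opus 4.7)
The plan is to invoke Singer's theorem (Theorem~\ref{Theorem:singer}) to extract a rational integrating factor for the Riccati $1$-form, and then to exploit the quadratic structure in $y$ to produce a rational particular solution.

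Set $P(x,y) = c(x) y^2 + b(x) y + a(x)$, so the Riccati is cut out by the $1$-form $\omega = a(x)\,dy - P(x,y)\,dx$. Singer's theorem supplies rational functions $U, V \in \mathbb{C}(x,y)$ with $U_y = V_x$ such that $R = \exp\bigl(\int U\,dx + V\,dy\bigr)$ satisfies $d(R\omega) = 0$. Unwinding this closure condition and dividing by $R$ gives
\begin{equation}\label{eq:IF}
a(x)\,U + P(x,y)\,V + a'(x) + 2c(x)\,y + b(x) = 0.
\end{equation}
If $c(x) \equiv 0$ the Riccati is already linear and we are in case (1), so assume henceforth that $c(x) \not\equiv 0$.

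A degree count in $y$ rules out $V$ being polynomial in $y$. Indeed, if $\deg_y V = n \geq 0$ with leading coefficient $V_n(x) \neq 0$, then $PV$ contributes a term $c(x) V_n(x)\,y^{n+2}$ which cannot be cancelled in \eqref{eq:IF} (the companion term $aU$ has degree at most $n+1$ via $U_y = V_x$); the alternative $V \equiv 0$ forces $U_y = -2c/a \neq 0 = V_x$, also a contradiction. Hence $V$ has a genuine pole in $y$. Writing the Laurent expansion $V = W(x)/(y - \phi(x))^n + \cdots$ at a pole $y = \phi(x)$ with $\phi$ algebraic over $\mathbb{C}(x)$ and using $U_y = V_x$, the compatible pole of $U$ has leading coefficient $-W(x)\phi'(x)$. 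The $(y-\phi)^{-n}$ coefficient of \eqref{eq:IF} then collapses to $W(x)\bigl(P(x,\phi(x)) - a(x)\phi'(x)\bigr) = 0$, whence $a(x)\phi'(x) = P(x,\phi(x))$: that is, $\phi$ is a particular solution of the Riccati. Factoring the denominator of $V$ in $\mathbb{C}(x)[y]$ and isolating a linear factor $B(x)y - A(x)$ yields a rational particular solution $y_0(x) = A(x)/B(x)$.

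Equipped with this rational particular solution, the substitution $Y = y - y_0(x)$ is a direct computation: the terms independent of $Y$ cancel because $y_0$ satisfies the Riccati, and one obtains
\begin{equation*}
a(x)\,Y' = c(x)\,Y^2 + \bigl(2c(x)\,y_0(x) + b(x)\bigr)\,Y,
\end{equation*}
which is precisely the Bernoulli form of case (2). The main obstacle I anticipate is the descent step: showing that the denominator of $V$ in $\mathbb{C}(x)[y]$ must admit a \emph{linear} factor in $y$, so that $\phi$ is truly rational and not merely algebraic. This requires combining the rationality of $U,V$ from Singer's theorem with an argument ruling out that all irreducible factors of the denominator have degree $\geq 2$ in $y$, a configuration which would produce conjugate algebraic pairs of solutions incompatible with the balancing of leading Laurent coefficients in \eqref{eq:IF}.
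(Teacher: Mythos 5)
Your overall strategy --- extract the closed rational $1$-form $\eta=U\,dx+V\,dy$ from Singer's theorem, read off the identity $aU+PV+a'+2cy+b=0$, and locate invariant curves at the poles of $V$ in the $y$-direction --- is a legitimate variant of what the paper does, and your computations up to the identity $a(x)\phi'(x)=P(x,\phi(x))$ are sound. The paper organizes the argument differently: it splits according to whether the Riccati foliation admits a non-vertical invariant \emph{algebraic} curve. If it does, transversality of the leaves to the non-invariant vertical fibers is used to force the irreducible polynomial $f$ cutting the curve to satisfy $f_{yy}=0$, hence $f=A(x)-B(x)y$, so the particular solution is rational and the shift $Y=y-A/B$ gives the Bernoulli form. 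If it does not, the polar divisor of $\eta$ consists of vertical lines, an integration lemma for closed rational $1$-forms gives an explicit shape for $\eta$, and comparing the $y^2$-coefficients in $d\Omega=\eta\wedge\Omega$ forces $c\equiv 0$; that is how case (1) actually arises in the paper, whereas you obtain it only as the trivial branch $c\equiv 0$ of your dichotomy (which is logically acceptable for proving the disjunction, but carries no content).

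The genuine gap is the one you flag yourself, and it cannot be closed by the device you propose. The leading Laurent coefficient identity $W\bigl(P(x,\phi)-a\phi'\bigr)=0$ is satisfied \emph{separately} by every conjugate $\phi_1,\dots,\phi_d$ of an algebraic pole of $V$: each conjugate branch is invariant, each balances its own leading coefficient, and nothing in the identity plays the conjugates against one another. So ``conjugate algebraic pairs incompatible with the balancing of leading Laurent coefficients'' is not an available contradiction; an irreducible factor of the denominator of $V$ of $y$-degree $d\geq 2$ is perfectly consistent with everything you have written. The descent from an algebraic particular solution to a \emph{rational} one is precisely where the mathematical content of the lemma lies, and it is the step the paper supplies by geometric means (every non-vertical irreducible invariant algebraic curve of a Riccati foliation is transverse to the non-invariant fibers, from which the paper deduces $\deg_y f=1$). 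Some input of this geometric or differential-Galois nature must replace your missing step: purely formal manipulation of the integrating-factor identity cannot distinguish a rational invariant graph from an irreducible invariant curve of higher $y$-degree, so as written the proof establishes only that the equation is linear or admits an \emph{algebraic} particular solution, which is strictly weaker than the statement.
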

\begin{proof}
Let $\Omega=(c(x)y^2 + b(x)y + a(x))dx - a(x)dy$. The ODE is equivalent to $\Omega=0$.
According to Singer \cite{Singer} (Theorem~\ref{Theorem:singer} above) there is a rational 1-form
$\eta=U(x,y) dx + V(x,y)dy$ such that $d \eta=(\frac{\partial U}{\partial y}- \frac{\partial V}{\partial x})dy \wedge dx=0$ and $\exp(\int \eta)$ is an integrating factor for $\Omega$.  This means that $d (\Omega/\exp(\int\eta))=0$ and therefore $d \Omega= \eta\wedge \Omega$.

\noindent{\bf Case 1}.  $\Omega=0$ admits some invariant algebraic curve which is not a vertical line $x=c\in \mathbb C$.

In this case we may choose an irreducible polynomial $f(x,y)$ such that $f(x,y)=0$ describes this
non-vertical algebraic solution. Now we observe that  the leaves of the Riccati foliation defined by $\Omega=0$ on $\mathbb P^1 \times \mathbb P^1$ are, except for those contained in the invariant vertical fibers, all transverse to the vertical fibers $\{x\} \times \mathbb P^1 \subset \mathbb P^1 \times \mathbb P^1$. Thus we conclude that $\frac{\partial f}{\partial y}(x,y)$ never vanishes for each $x$ such that $a(x) \ne 0$.
Since $f(x,y)$ is polynomial, this implies that  $f(x,y)=A(x) - B(x)y$ for some polynomials $A(x), B(x)$:
Look at the function $f_y=\frac{\partial f}{\partial y}(x,y)$. This function is constant along almost all the fibers of $x\colon \mathbb C^2 \to \mathbb C$. Therefore we must have that $d f_y \wedge dx=0$ almost everywhere. Thus $df_y \wedge dx=0$ everywhere and this gives $f_{yy}=\frac{\partial ^2 f}{\partial y^2}=0$.
Hence $f(x,y)=A(x)- B(x)y$ by standard integration.
This shows that the non-vertical solution is a graph of the form $y(x)=\frac{A(x)}{B(x)}$.

In this case we may perform a change of variables as follows:
write $Y=y - y(x)$ to obtain:
\[
\frac{dY}{dx}=\frac{c(x)Y^2 + (2c(x) y(x) + b(x))Y}{a(x)}=\frac{B(x)c(x)Y^2 +(b(x)B(x)+2 c(x)A(x))Y}{a(x)B(x)}.
\]

This is a Bernoulli equation.

\noindent{\bf Case 2}. If there is no invariant curve other the vertical lines.

Denote by $(\eta)_\infty$ the polar divisor of $\eta$ in $\mathbb C^2$.
\begin{Claim}
We have $(\eta)_\infty=\{x \in \mathbb C, a(x)=0\} \times \mathbb C$.
\end{Claim}
\begin{proof}
First of all we recall that the polar set of $\eta$ is invariant by $\Omega=0$ (\cite{Camacho-Scardua}).
By the hypothesis we then conclude that   $(\eta)_\infty\supseteq \{x \in \mathbb C, a(x)=0\} \times \mathbb C$ is a union of vertical invariant lines.
Thus, from the integration lemma in \cite{C-LN-S1} we have
\[
\eta=\sum\limits_{j=1}^r \lambda_j \frac{dx}{x-x_j} + d \bigg(\frac{g(x,y)}{\prod\limits_{j=1}^r (x-x_j)^{n_j -1}}\bigg)
\]
where $g(x,y)$ is a polynomial function, $\lambda_j\in \mathbb C$ and $n_j\in \mathbb N$ is the order of
the poles of $\Omega$ in the component $(x=x_j)$ of the polar set.
Now we use the equation $d\Omega= \eta \wedge \Omega$ to obtain
\[
-[a^\prime(x) + 2y c(x) + b(x)] dx \wedge dy =
-a(x) \sum\limits_{j=1}^r \frac{\lambda_j}{x-x_j} dx \wedge dy + d \bigg(\frac{g(x,y)}{\prod\limits_{j=1}^r (x-x_j)^{n_j -1}}\bigg) \wedge \Omega
\]
where
\[
d \bigg(\frac{g(x,y)}{\prod\limits_{j=1}^r (x-x_j)^{n_j -1}}\bigg) \wedge \Omega= g d\bigg(\frac{1}{\prod\limits_{j=1}^r (x-x_j)^{n_j -1}}\bigg)\wedge a(x) dy +
\frac{dg}{\prod\limits_{j=1}^r (x-x_j)^{n_j -1}} \wedge \Omega.
\]

Notice that
\[
\frac{dg}{\prod\limits_{j=1}^r (x-x_j)^{n_j -1}} \wedge \Omega=\frac{g_x}{\prod\limits_{j=1}^r (x-x_j)^{n_j -1}}a(x) dx \wedge dy - \frac{g_y}{\prod\limits_{j=1}^r (x-x_j)^{n_j -1}} (c(x) y^2 + b(x) y + a(x))dx\wedge dy
\]

Notice that the left side is $-[a^\prime(x) + 2y c(x) + b(x)]$ has no term in $y^2$ so that we must have
$g_y c(x)=0$ in the right side. If $g_y=0$ then $g=g(x)$ and from the left side  we must have
$c(x)=0$. This shows that we must always have $c(x)=0$. This implies that the original equation is
a linear homogeneous  equation of the form
\[
\frac{dy}{dx}=\frac{ b(x)y + a(x)}{a(x)}
\]
which can be written as $a(x) y^\prime  - b(x)y  = a(x)$.

\end{proof}

\end{proof}

\begin{proof}[Proof of Theorem~\ref{Theorem:characterizationliouville}]
Let us prove the second part, ie., the equivalence. We assume that  $L[u](z)=a(z)u^{\prime\prime}+b(z)u^\prime+c(z)u=0$ admits a Liouvillian first integral.
We consider the change of coordinates $t=\frac{u^\prime}{u}$ which gives the following
 Riccati model $\mathcal R: \frac{dt}{dz}=-\frac{a(z) t^2 + b(z) t + c(z)}{a(z)}$.
 We claim:
 \begin{Claim}
 The Riccati model $\mathcal R$ also  admits a
Liouvillian first integral.
\end{Claim}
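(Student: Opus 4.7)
The plan is to reduce the problem to the explicit recipe for solving a Riccati equation once a particular solution is known. The main external input will be the Rosenlicht--Singer theorem already cited, which forces every solution of $L[u]=0$ to be Liouvillian once one Liouvillian first integral of $L[u]=0$ exists. This will provide a Liouvillian solution of $\mathcal R$, from which a Liouvillian first integral of $\mathcal R$ can be built by linearization.

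First, I would invoke Rosenlicht--Singer to fix a nonzero Liouvillian solution $u_0(z)$ of $L[u]=0$, and set $t_0(z):=u_0^\prime(z)/u_0(z)$. Since Liouvillian functions are closed under differentiation and under division by a nonvanishing Liouvillian, $t_0$ is Liouvillian; a direct substitution using $L[u_0]=0$ shows that $t_0$ satisfies the Riccati model $\mathcal R:\;t^\prime=-(a(z)t^2+b(z)t+c(z))/a(z)$.

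Next, with $t_0$ in hand, I would carry out the classical linearization substitution $t=t_0(z)+1/v$ in $\mathcal R$. A short computation transforms $\mathcal R$ into the first order linear equation
\[
v^\prime-\Big(2t_0(z)+\tfrac{b(z)}{a(z)}\Big)v=1.
\]
Introducing the integrating factor $\mu(z)=\exp\bigl(-\!\int(2t_0+b/a)\,dz\bigr)$, which is Liouvillian, one integrates $(\mu v)^\prime=\mu$ to obtain the general solution $v(z)=\mu(z)^{-1}\bigl(K+\int\mu(z)\,dz\bigr)$ with parameter $K\in\mathbb C$. Solving this relation for $K$ and substituting back $v=1/(t-t_0(z))$ produces the Liouvillian function
\[
G(z,t)=\frac{\mu(z)}{t-t_0(z)}-\int\mu(z)\,dz,
\]
which is constant along every solution of $\mathcal R$ and is therefore the desired Liouvillian first integral.

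The only genuinely external ingredient is the Rosenlicht--Singer step used to extract a Liouvillian solution of $L[u]=0$ from the assumed Liouvillian first integral. Everything after it is a sequence of operations (differentiation, multiplication, division, exponentiation, integration, rational substitution) that remain inside the Liouvillian class by the very definition of a Liouvillian tower over $\mathbb C(z)$; the only obstacle I anticipate is the routine bookkeeping needed to confirm that each such operation builds a legal extension of the base differential field in which the resulting first integral $G(z,t)$ lives.
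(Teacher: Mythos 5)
Your argument is correct, and it diverges from the paper's at the decisive step. Both proofs begin identically: from the assumed Liouvillian first integral of $L[u]=0$, the Rosenlicht--Singer corollary gives that all solutions of $L[u]=0$ are Liouvillian. The paper then finishes in one line by citing Singer's Theorem~1 (page 674) as a black box to conclude that the Riccati model $\mathcal R$ has a Liouvillian first integral. You instead extract a nonzero Liouvillian solution $u_0$, pass to the Liouvillian particular solution $t_0=u_0'/u_0$ of $\mathcal R$, and run the classical linearization $t=t_0+1/v$, which correctly yields $v'-\big(2t_0+\tfrac{b}{a}\big)v=1$ and hence the explicit first integral $G(z,t)=\mu(z)/(t-t_0(z))-\int\mu(z)\,dz$ with $\mu=\exp\big(-\!\int(2t_0+b/a)\big)$. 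Each step stays inside the Liouvillian class: a tower is a differential field, so $t_0$ is Liouvillian; $\mu$ is an exponential of an integral of a field element; $\int\mu$ is an adjoined integral; and $G$ is a rational combination of these over $\mathbb C(z,t)$. What your route buys is an elementary, self-contained, and \emph{constructive} proof with an explicit formula for the first integral (note $\mu=u_0^{-2}\exp(-\!\int b/a)$ up to a constant, which is exactly the Wronskian-type factor appearing in case (a) of the theorem's conclusion), at the cost of a slightly longer argument; the paper's citation is shorter but leaves the existence of the first integral of $\mathcal R$ non-explicit. The only point worth making explicit in your write-up is that the solution space of $L[u]=0$ is two-dimensional, so a \emph{nonzero} Liouvillian solution $u_0$ indeed exists, and that $G$ is permitted to be singular along the graph $t=t_0(z)$, which is consistent with Singer's notion of a Liouvillian first integral.
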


\begin{proof}
By hypothesis the ODE $L[u]= a(z) u^{\prime \prime} + b(z) u ^\prime + c(z) u=0$ has
a Liouvillian first integral. By the Corollary in \cite{Singer} page 674 all solutions of
$L[u]=0$ are Liouvillian. This implies, by Theorem~1 in \cite{Singer} page 674, that
$\mathcal R$ admits a Liouvillian first integral.
\end{proof}

From the above lemma we have then two possibilities:

\noindent{\bf Case 1}.  There is a solution $\gamma(z)=A(z)/B(z)$ for the Riccati equation, where $A, B$ are polynomials. In this case there is a rational change of coordinates of the form $T= t - A(z)/B(z)$
 that takes the Riccati model $\mathcal R$ into  a Bernoulli foliation $\mathcal B: \frac{dT}{dz}=
 -\frac{B(z)a(z)T^2 +(b(z)B(z)+2a(z) A(z))T}{a(z)B(z)}=-T^2 -\tilde b(z)T$ where
  $\tilde b(z)=\frac{b(z)B(z)+2a(z) A(z)}{ a(z) B(z)}=\frac{b(z)}{a(z)}+2 \frac{A(z)}{B(z)}=
  \frac{b(z)}{a(z)}+ 2\gamma(z)$. In this case the original ODE $L[u](z)=0$ becomes $\tilde L[U](z)=U^{\prime \prime} + \tilde b (z) U^\prime=0$ after a rational change of coordinates
 $U=\exp(\int^z Td\eta)=\exp \big(\int^z\big(t - \frac{A(\eta)}{B(\eta)}\big)d\eta\big)=\exp (\int^z td\eta)\exp\big(-\int^z\frac{A(\eta)}{B(\eta)}d\eta\big)=
 \exp\big(\int^z -\frac{A(z)}{B(z)}dz\big)\cdot u=u\exp(-\int^z\gamma(\eta)d\eta) $ where  $\gamma(z)=\frac{A(z)}{B(z)}$.
 This shows that we have  Liouvillian solutions to the ODE which are given by
$$\begin{array}{rl}\exp\big(-\int^z\gamma(\eta)d\eta\big) u(z)&=U(z)=\ell +k \int^z \exp\big(-\int^\eta \tilde b(\xi)d\xi\big) d\eta \\&\\
&=\ell +k\int^z \exp\big(-\int^\eta \frac{b(\xi)}{a(\xi)}d\xi\big)\cdot \exp\big(\int^\eta -2\gamma(\xi)d\xi\big) d\eta\end{array}$$
for constants $k, \ell\in \mathbb C$. So that

 $$u(z)=\exp\big(\int^z\gamma(\eta)d\eta\big)\big[\ell +k\int^z \exp\big(-\int^\eta \frac{b(\xi)}{a(\xi)}d\xi\big)\cdot \exp\big(\int^\eta -2\gamma(\xi)d\xi\big) d\eta\big]$$
for constants $k, \ell \in \mathbb C$.

 \noindent{\bf Case 2}. We have $c(z)=0$ and therefore the original ODE is of the form
 $L[u]=a(z) u^{\prime \prime} + b(z) u^\prime=0$. Thus the solutions are Liouvillian
 given by
 $u(z) =k_1 + k_2 \int^z \exp(-\int^\eta \frac{b(\xi)}{a(\xi)}d\xi)d\eta$ for constants $k_1,k_2\in\mathbb{C}$.
\end{proof}

\section{Third order equations}

\subsection{Third order Euler equations}

\begin{Theorem}\label{chareulerequation2}{\rm Consider the third order differential equation \begin{equation}\label{2chareuler1}
z^3u^{\prime\prime\prime}+z^2a(z)u^{\prime\prime}+zb(z)u^\prime+c(z)u=0
\end{equation}
where $a,b,c$ are entire functions in the complex plane. Thus the origin and the infinity are regular singular points of (\ref{chareuler1}) if and only if equation (\ref{2chareuler1}) is an  Euler equation.}
\end{Theorem}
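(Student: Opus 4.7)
The plan is to mirror the strategy used in Theorem~\ref{chareulerequation} for second order, adapted to the third order setting. The forward direction is essentially a direct check: if $a(z),b(z),c(z)$ are constants, then the equation is already in the form (\ref{eqs2}) with $n=3$, so the origin is a regular singular point by definition. For the infinity, one performs the coordinate change $z=1/t$ and verifies that the resulting equation is again of the form (\ref{eqs2}) at $t=0$; the computation below handles this verification automatically.

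For the converse, the origin is automatically a regular singular point of (\ref{2chareuler1}) under the standing hypothesis, since $a,b,c$ are entire (hence analytic at $0$). The whole content of the theorem is therefore to extract information from the hypothesis that $t=0$ is a regular singular point of the pulled-back equation. Setting $z=1/t$ and $w(t)=u(1/t)$, I would first compute the three Faà di Bruno type identities
\[
u^{\prime}(1/t)=-t^{2}w^{\prime}(t),\qquad u^{\prime\prime}(1/t)=t^{4}w^{\prime\prime}(t)+2t^{3}w^{\prime}(t),
\]
\[
u^{\prime\prime\prime}(1/t)=-t^{6}w^{\prime\prime\prime}(t)-6t^{5}w^{\prime\prime}(t)-6t^{4}w^{\prime}(t),
\]
and substitute into (\ref{2chareuler1}). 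After collecting terms and multiplying by $-1$ one obtains
\[
t^{3}w^{\prime\prime\prime}+t^{2}\bigl[6-a(1/t)\bigr]w^{\prime\prime}+t\bigl[6+b(1/t)-2a(1/t)\bigr]w^{\prime}-c(1/t)w=0.
\]
The claim that infinity is a regular singular point of (\ref{2chareuler1}) is, by the definition of regular singular point for an equation of the form (\ref{eqs2}) with $n=3$, equivalent to the statement that the three coefficients $6-a(1/t)$, $6+b(1/t)-2a(1/t)$ and $-c(1/t)$ extend analytically across $t=0$.

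From here the argument is purely algebraic: expanding $a(z)=\sum_{n\geq 0}a_{n}z^{n}$, $b(z)=\sum_{n\geq 0}b_{n}z^{n}$, $c(z)=\sum_{n\geq 0}c_{n}z^{n}$ (entire), we get $a(1/t)=a_{0}+a_{1}/t+a_{2}/t^{2}+\cdots$ and analogously for $b$ and $c$. Analyticity of $6-a(1/t)$ at $t=0$ forces $a_{n}=0$ for all $n\geq 1$, so $a(z)\equiv a_{0}$ is constant. Combined with this, analyticity of $6+b(1/t)-2a(1/t)$ forces $b(z)$ to be constant, and analyticity of $-c(1/t)$ forces $c(z)$ to be constant. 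Hence (\ref{2chareuler1}) becomes $z^{3}u^{\prime\prime\prime}+z^{2}a_{0}u^{\prime\prime}+zb_{0}u^{\prime}+c_{0}u=0$, which is an Euler equation.

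The main obstacle is simply the bookkeeping in the third derivative pull-back: the computation of $u^{\prime\prime\prime}(1/t)$ in terms of $w^{\prime\prime\prime}(t), w^{\prime\prime}(t), w^{\prime}(t)$ must be done carefully so that the coefficients $6-a(1/t)$, $6+b(1/t)-2a(1/t)$, $-c(1/t)$ come out correctly; an arithmetic slip here would only change numerical constants but could obscure the fact that the three coefficients individually involve $a(1/t)$, $b(1/t)$, $c(1/t)$ and must therefore each be independently analytic at $t=0$. Once the transformed equation is secured, the rest of the argument is essentially an identity principle for entire functions, exactly as in the proof of Theorem~\ref{chareulerequation}.
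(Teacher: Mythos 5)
Your proposal is correct and follows essentially the same route as the paper's proof: the identical substitution $z=1/t$, the same pulled-back equation $t^{3}w^{\prime\prime\prime}+t^{2}[6-a(1/t)]w^{\prime\prime}+t[6-2a(1/t)+b(1/t)]w^{\prime}-c(1/t)w=0$, and the same conclusion that analyticity (existence of the limits) of the three coefficients at $t=0$ forces all positive-index Taylor coefficients of $a$, $b$, $c$ to vanish. No substantive difference to report.
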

\begin{proof}{\rm We already know that every Euler equation has the origin and the infinity as
singular points. Let us see the converse. By the change of coordinates $z=1/t$ and considering $w(t)=u\big(\frac{1}{t}\big)$ we have
$$u^\prime\big(\frac{1}{t}\big)=-t^2w^\prime(t), \, \, u^{\prime\prime}\big(\frac{1}{t}\big)=t^4w^{\prime\prime}(t)+2t^3w^\prime(t), \, \, u^{\prime\prime\prime}\big(\frac{1}{t}\big)=-t^6w^{\prime\prime\prime}(t)-6t^5w^{\prime\prime}(t)-6t^4w^\prime(t),$$
hence
$$\frac{1}{t^3}u^{\prime\prime\prime}\big(\frac{1}{t}\big)+\frac{1}{t^2}a\big(\frac{1}{t}\big)u^{\prime\prime}\big(\frac{1}{t}\big)+\frac{1}{t}b\big(\frac{1}{t}\big)u^\prime\big(\frac{1}{t}\big)+c\big(\frac{1}{t}\big)u\big(\frac{1}{t}\big)=0$$

$$\frac{1}{t^3}[-t^6w^{\prime\prime\prime}(t)-6t^5w^{\prime\prime}(t)-6t^4w^\prime(t)]+\frac{1}{t^2}a\big(\frac{1}{t}\big)[t^4w^{\prime\prime}(t)+2t^3w^\prime(t)]+\frac{1}{t}b\big(\frac{1}{t}\big)[-t^2w^\prime(t)]+c\big(\frac{1}{t}\big)w(t)=0$$
\begin{equation}\label{2chareuler2}
t^3w^{\prime\prime\prime}(t)+t^2\big[6-a\big(\frac{1}{t}\big)\big]w^{\prime\prime}(t)+t\big[6-2a\big(\frac{1}{t}\big)+b\big(\frac{1}{t}\big)\big]w^\prime(t)-c\big(\frac{1}{t}\big)w(t)=0.
\end{equation}
Given that the infinity is a regular singular point of (\ref{2chareuler1}) we have that the origin
is a regular singular point of (\ref{2chareuler2}) consequently there exist the limits
$$\displaystyle\lim_{t\to 0}\frac{t^3\big[6-a\big(\frac{1}{t}\big)\big]}{t^3}=\displaystyle\lim_{t\to 0}\big[6-a\big(\frac{1}{t}\big)\big]=\displaystyle\lim_{t\to 0}\big[(6-a_0)-\frac{a_1}{t}+\frac{a_2}{t^2}+\ldots\big],$$

$$\displaystyle\lim_{t\to 0}\frac{t^3\big[6-2a\big(\frac{1}{t}\big)+b\big(\frac{1}{t}\big)\big]}{t^3}=\displaystyle\lim_{t\to 0}\big[6-2a\big(\frac{1}{t}\big)+b\big(\frac{1}{t}\big)\big]=\displaystyle\lim_{t\to 0}\big[(6-2a_0+b_1)-\frac{a_1-b_1}{t}+\frac{a_2-b_2}{t^2}+\ldots\big]$$
 and
$$\displaystyle\lim_{t\to 0}\frac{t^3c\big(\frac{1}{t}\big)}{t^3}=\displaystyle\lim_{t\to 0}c\big(\frac{1}{t}\big)=\displaystyle\lim_{t\to 0}\big[c_0+\frac{c_1}{t}+\frac{c_2}{t^2}+\ldots\big].$$
and then  $0=a_1=a_2=\ldots$, $0=b_1=b_2=\ldots$ and $0=c_1=c_2=\ldots$. Hence equation (\ref{2chareuler1}) is of the form
$$z^3u^{\prime\prime\prime}(z)+a_0z^2u^{\prime\prime}+b_0zu^\prime(z)+c_0u(z)=0$$
  which is an  Euler equation.}
\end{proof}

\begin{Theorem}\label{2charsing}{\rm Consider the third order differential equation
\begin{equation}\label{2charsing1}
a(z)u^{\prime\prime\prime}+b(z)u^{\prime\prime}+c(z)u^\prime+d(z)u=0
\end{equation}
where $a,b,c,d$ are entire functions with $a^\prime(0)\neq0$. Thus the origin and the infinity are regular singular points of (\ref{2charsing1}) if and only if there exist  $k=0,1,2,\ldots$ in such a way that equation (\ref{2charsing1}) is of the form
\begin{equation}\label{2charsing2}
\begin{array}{c}
(A_1z+\ldots+A_{k+3}z^{k+3})u^{\prime\prime\prime}(z)+(B_0+B_1z+\ldots+B_{k+2}z^{k+2})u^{\prime\prime}(z)\\ \\+(C_0+C_1z+\ldots+C_{k+1}z^{k+1})u^\prime(z)+(D_0+D_1z+\ldots+D_{k}z^k)u(z)=0\end{array}
\end{equation}
where $A_1,\ldots,A_{k+3},B_0,\ldots,B_{k+2},C_0,\ldots,C_{k+1},D_0,\ldots,D_k$ are constants such that $A_1,A_{k+3}\neq0$.
.}
\end{Theorem}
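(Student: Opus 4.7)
The plan is to prove this result as the natural third-order analogue of Theorem~\ref{charsing}, closely mirroring its two-step structure. For the direction $(\Leftarrow)$, I will assume (\ref{2charsing2}) and verify regularity at both the origin and the infinity. At the origin, writing $A(z)=A_1 z+\cdots+A_{k+3}z^{k+3}$, $B(z)=B_0+\cdots+B_{k+2}z^{k+2}$, etc., I compute directly
$\displaystyle\lim_{z\to 0}\frac{zB(z)}{A(z)}=\frac{B_0}{A_1}$, $\displaystyle\lim_{z\to 0}\frac{z^2C(z)}{A(z)}=0$, and $\displaystyle\lim_{z\to 0}\frac{z^3D(z)}{A(z)}=0$, all finite, so the origin is a regular singular point. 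For the infinity I apply the substitution $z=1/t$, $w(t)=u(1/t)$, using
$u'(1/t)=-t^2w'(t)$, $u''(1/t)=t^4w''(t)+2t^3w'(t)$, $u'''(1/t)=-t^6w'''(t)-6t^5w''(t)-6t^4w'(t)$,
to rewrite (\ref{2charsing2}) as a third order ODE in $w(t)$ whose coefficients of $w''',w'',w',w$ are, respectively,
$t^6 A(1/t)$, $6t^5A(1/t)-t^4B(1/t)$, $6t^4A(1/t)-2t^3B(1/t)+t^2C(1/t)$, $-D(1/t)$.
Using $\deg A=k+3$ and the bounds $\deg B\le k+2$, $\deg C\le k+1$, $\deg D\le k$, the three analogous limits at $t=0$ are each finite (a direct generalization of the computations in Theorem~\ref{chareulerequation2}), so $t=0$ is a regular singular point of the transformed equation and therefore $z=\infty$ is a regular singular point of (\ref{2charsing2}).

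For the converse, I assume both the origin and the infinity are regular singular points of (\ref{2charsing1}) and expand $a(z)=\sum_{n\geq 1}a_n z^n$ (with $a_1=a'(0)\neq 0$) and $b,c,d$ as entire power series. The same substitution $z=1/t$ yields the same transformed equation as above. Regularity of its origin is equivalent to the finiteness of
$\displaystyle\lim_{t\to 0}\frac{b(1/t)}{ta(1/t)}=\lim_{t\to 0}\frac{b_0+b_1/t+b_2/t^2+\cdots}{a_1+a_2/t+a_3/t^2+\cdots}$,
$\displaystyle\lim_{t\to 0}\frac{c(1/t)}{t^2 a(1/t)}$, and $\displaystyle\lim_{t\to 0}\frac{d(1/t)}{t^3 a(1/t)}$.
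Proceeding exactly as in the converse part of Theorem~\ref{charsing}, the asymptotic analysis of these Laurent tails at $t=0$ forces the existence of an integer $k\geq 0$ such that $a_{k+3}\neq 0$, $a_n=0$ for $n>k+3$, together with the truncations $b_n=0$ for $n>k+2$, $c_n=0$ for $n>k+1$, $d_n=0$ for $n>k$. Thus $a,b,c,d$ are polynomials of the asserted degrees. The origin regularity of (\ref{2charsing1}) imposes no additional constraint, because the hypothesis $a(0)=0$, $a'(0)\ne 0$ makes $z/a(z)$ analytic at the origin, whence $zb/a,\,z^2c/a,\,z^3d/a$ are automatically analytic at $0$ for any entire $b,c,d$. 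This gives exactly the form (\ref{2charsing2}).

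The hardest technical step will be the triple chain-rule identity for $u'''(1/t)$, which is needed to compute the correct coefficients $-t^6,-6t^5,-6t^4$ in the transformed equation; any algebraic slip here propagates into the three limits and corrupts both directions of the proof. Once this identity is secured, the remainder is essentially bookkeeping paralleling Theorem~\ref{charsing}. The other point requiring care is the series matching in the converse direction: I must compare the orders of the Laurent tails of $b(1/t),c(1/t),d(1/t)$ against those of $ta(1/t),t^2a(1/t),t^3a(1/t)$, respectively, to pin down the three degree bounds simultaneously and to certify that the index $k$ obtained from the condition on $a$ is compatible with the truncations on $b,c,d$, together with the strict inequality $A_{k+3}\ne 0$.
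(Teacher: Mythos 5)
Your proposal is correct and follows essentially the same route as the paper's own proof: the same substitution $z=1/t$ with the identities $u'(1/t)=-t^2w'$, $u''(1/t)=t^4w''+2t^3w'$, $u'''(1/t)=-t^6w'''-6t^5w''-6t^4w'$, the same three limit computations at $0$ and at $\infty$, and the same Laurent-tail comparison forcing the degree truncations $k+3,\,k+2,\,k+1,\,k$. The only (cosmetic) deviation is that you dispose of the origin-regularity constraint by noting $z/a(z)$ is analytic when $a(0)=0$, $a'(0)\neq 0$, whereas the paper writes out the corresponding limits explicitly; both arguments rest on $a_1\neq 0$ in the same way.
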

\begin{proof}{\rm Let us first see  that (\ref{2charsing2}) has the origin and the infinity as regular singular point. Clearly the origin é singular point of (\ref{2charsing2}) and since there exist the limits

$$\displaystyle\lim_{z\to 0}\frac{z(B_0+B_1z+\ldots+B_{k+2}z^{k+2})}{(A_1z+\ldots+A_{k+3}z^{k+3})}=\displaystyle\lim_{z\to 0}\frac{B_0+B_1z+\ldots+B_{k+2}z^{k+2}}{(A_1+\ldots+A_{k+3}z^{k+2})}=\frac{B_0}{A_1},$$
$$\displaystyle\lim_{z\to 0}\frac{z^2(C_0+C_1z+\ldots+C_{k+1}z^{k+1})}{A_1z+\ldots+A_{k+3}z^{k+3}}=\displaystyle\lim_{z\to 0}\frac{z(C_0+C_1z+\ldots+C_{k+1}z^{k+1})}{A_1+\ldots+A_{k+3}z^{k+2}}=0$$
 and
$$\displaystyle\lim_{z\to 0}\frac{z^3(D_0+D_1z+\ldots+D_{k}z^{k})}{A_1z+\ldots+A_{k+3}z^{k+3}}=\displaystyle\lim_{z\to 0}\frac{z^2(D_0+D_1z+\ldots+D_{k}z^{k})}{A_1+\ldots+A_{k+3}z^{k+2}}=0$$
we have that the origin is a regular singular point. Putting  $z=1/t$ and considering $w(t)=u\big(\frac{1}{t}\big)$ we have
$$u^\prime\big(\frac{1}{t}\big)=-t^2w^\prime(t), \, \, u^{\prime\prime}\big(\frac{1}{t}\big)=t^4w^{\prime\prime}(t)+2t^3w^\prime(t), \, \, u^{\prime\prime\prime}\big(\frac{1}{t}\big)=-t^6w^{\prime\prime\prime}(t)-6t^5w^{\prime\prime}(t)-6t^4w^\prime(t),$$
hence
$$\begin{array}{c}
\big(\frac{A_1}{t}+\ldots+\frac{A_{k+3}}{t^{k+3}}\big)u^{\prime\prime\prime}\big(\frac{1}{t}\big)+\big(B_0+\frac{B_1}{t}+\ldots+\frac{B_{k+2}}{t^{k+2}}\big)u^{\prime\prime}\big(\frac{1}{t}\big)\\
\\+\big(C_0+\frac{C_1}{t}+\ldots+\frac{C_{k+1}}{t^{k+1}}\big)u^\prime\big(\frac{1}{t}\big)+\big(D_0+\frac{D_1}{t}+\ldots+\frac{D_{k}}{t^{k}}\big)u\big(\frac{1}{t}\big)=0\end{array}$$

$$\begin{array}{c}
\big(\frac{A_1}{t}+\ldots+\frac{A_{k+3}}{t^{k+3}}\big)[-t^6w^{\prime\prime\prime}(t)-6t^5w^{\prime\prime}(t)-6t^4w^\prime(t)]+\big(B_0+\frac{B_1}{t}+\ldots+\frac{B_{k+2}}{t^{k+2}}\big)[t^4w^{\prime\prime}(t)+2t^3w^\prime(t)]\\
\\+\big(C_0+\frac{C_1}{t}+\ldots+\frac{C_{k+1}}{t^{k+1}}\big)[-t^2w^\prime(t)]+\big(D_0+\frac{D_1}{t}+\ldots+\frac{D_{k}}{t^{k}}\big)w(t)=0\end{array}$$
\begin{equation}\label{2charsing3}
\begin{array}{c}
\big(A_1t^{k+5}+\ldots+A_{k+3}t^{3}\big)w^{\prime\prime\prime}(t)+\big((6A_1-B_0)t^{k+4}+\ldots+(6A_{k+3}-B_{k+2})t^2\big)w^{\prime\prime}(t)\\
\\+\big( (6A_1-2B_0)t^{k+3}+(6A_2-2B_1+C_0)t^{k+2}+\ldots+(6A_{k+3}-2B_{k+2}+C_{k+1})t\big)w^\prime(t)\\
\\-\big(D_0t^k+\ldots+D_{k}\big)w(t)=0\end{array}
\end{equation}
Observe that the origin is a singular point of (\ref{2charsing3}) and since there exist the limits
$$\displaystyle\lim_{t\to 0}\frac{t\big((6A_1-B_0)t^{k+4}+\ldots+(6A_{k+3}-B_{k+2})t^2\big)}{A_1t^{k+5}+\ldots+A_{k+3}t^3}=\frac{2A_{k+3}-B_{k+2}}{A_{k+3}}$$
$$\displaystyle\lim_{t\to 0}\frac{t^2\big((6A_1-2B_0)t^{k+3}+\ldots+(6A_{k+3}-2B_{k+2}+C_{k+1})t\big)}{A_1t^{k+5}+\ldots+A_{k+3}t^3}=\frac{6A_{k+3}-2B_{k+2}+C_{k+1}}{A_{k+3}}$$
 and
$$\displaystyle\lim_{t\to 0}\frac{t^3\big(D_0t^k+\ldots+D_{k}\big)}{A_1t^{k+5}+\ldots+A_{k+2}t^3}=\displaystyle\lim_{t\to 0}\frac{D_0t^k+\ldots+D_{k}}{A_1t^{k+2}+\ldots+A_{k+3}}=\frac{D_k}{A_{k+3}}$$we have that o the origin
is a regular singular point of (\ref{2charsing3}) consequently the infinity is a regular singular point of (\ref{2charsing2}). Conversely, assume that the origin and the infinity are regular singular points of (\ref{2charsing1}). Hence by the change of coordinates $z=1/t$ and considering $v(t)=u\big(\frac{1}{t}\big)$
$$u^\prime\big(\frac{1}{t}\big)=-t^2v^\prime(t),\, \, u^{\prime\prime}\big(\frac{1}{t}\big)=t^4v^{\prime\prime}(t)+2t^3v^\prime(t), \, \, u^{\prime\prime\prime}\big(\frac{1}{t}\big)=-t^6v^{\prime\prime\prime}(t)-6t^5v^{\prime\prime}(t)-6t^4v^\prime(t),$$

hence
$$a\big(\frac{1}{t}\big)u^{\prime\prime\prime}\big(\frac{1}{t}\big)+b\big(\frac{1}{t}\big)u^{\prime\prime}\big(\frac{1}{t}\big)+c\big(\frac{1}{t}\big)u^\prime\big(\frac{1}{t}\big)+d\big(\frac{1}{t}\big)u\big(\frac{1}{t}\big)=0$$
$$a\big(\frac{1}{t}\big)[-t^6v^{\prime\prime\prime}(t)-6t^5v^{\prime\prime}(t)-6t^4v^\prime(t)]+b\big(\frac{1}{t}\big)[t^4v^{\prime\prime}(t)+2t^3v^\prime(t)]+c\big(\frac{1}{t}\big)[-t^2v^\prime(t)]+d\big(\frac{1}{t}\big)v(t)=0$$
\begin{equation}\label{2charsing4}
t^6a\big(\frac{1}{t}\big)v^{\prime\prime\prime}(t)+\big[6t^5a\big(\frac{1}{t}\big)-t^4b\big(\frac{1}{t}\big)\big]v^{\prime\prime}(t)+\big[6t^4a\big(\frac{1}{t}\big)-2t^3b\big(\frac{1}{t}\big)+t^2c\big(\frac{1}{t}\big)\big]v^\prime(t)-d\big(\frac{1}{t}\big)v(t)=0.
\end{equation}
Given that the infinity is a regular singular point of (\ref{2charsing1}) then the origin is a regular singular point of (\ref{2charsing4}). Hence, there exist the limits

$$\displaystyle\lim_{t\to 0}\frac{t\big[6t^5a\big(\frac{1}{t}\big)-t^4b\big(\frac{1}{t}\big)\big]}{t^6a\big(\frac{1}{t}\big)}=\displaystyle\lim_{t\to 0}\big[6-\frac{b\big(\frac{1}{t}\big)}{ta\big(\frac{1}{t}\big)}\big]=6-\lim_{t\to 0}\frac{b_0+\frac{b_1}{t}+\frac{b_2}{t^2}+\ldots}{a_1+\frac{a_2}{t}+\frac{a_3}{t^2}+\ldots}$$,

$$\begin{array}{c l}\displaystyle\lim_{t\to 0}\frac{t^2\big[6t^4a\big(\frac{1}{t}\big)-2t^3b\big(\frac{1}{t}\big)+t^2c\big(\frac{1}{t}\big)\big]}{t^6a\big(\frac{1}{t}\big)}&=\displaystyle\lim_{t\to 0}\big[6-\frac{2tb\big(\frac{1}{t}\big)-c\big(\frac{1}{t}\big)}{t^2a\big(\frac{1}{t}\big)}\big]\\ &\\&=6-\displaystyle\lim_{t\to 0}\frac{2b_0t+(2b_1-c_0)+\frac{2b_2-c_1}{t}+\frac{2b_3-c_2}{t^2}+\ldots}{a_1t+a_2+\frac{a_3}{t}+\frac{a_4}{t^2}+\ldots}\end{array}$$
 and
$$\displaystyle\lim_{t\to 0}\frac{t^3d\big(\frac{1}{t}\big)}{t^6a\big(\frac{1}{t}\big)}=\displaystyle\lim_{t\to 0}\frac{d\big(\frac{1}{t}\big)}{t^3a\big(\frac{1}{t}\big)}=\displaystyle\lim_{t\to 0}\frac{d_0+\frac{d_1}{t}+\frac{d_2}{t^2}+\frac{d_3}{t^3}+\ldots}{a_1t^2+a_2t+a_3+\frac{a_4}{t}+\ldots}$$
and then  we have there exists  $k=0,1,2\ldots$ in such a way that $a_{k+3}\neq0$, $0=a_{k+4}=a_{k+5}=\ldots$, $0=b_{k+3}=b_{k+4}=\ldots$, $0=c_{k+2}=c_{k+3}=\ldots$ and $0=d_{k+1}=d_{k+2}=\ldots$. Given that the origin is a regular singular point of (\ref{2charsing1}) we have that there exist the limits
$$\displaystyle\lim_{z\to 0}\frac{zb(z)}{a(z)}=\displaystyle\lim_{z\to 0}\frac{b_0z+b_1z^2+\ldots+b_{k+2}z^{k+3}}{a_1z+a_2z^2+\ldots+a_{k+3}z^{k+3}}=\displaystyle\lim_{z\to 0}\frac{b_0+b_1z+\ldots+b_{k+2}z^{k+2}}{a_1+a_2z+\ldots+a_{k+3}z^{k+2}}$$
$$\displaystyle\lim_{z\to 0}\frac{z^2c(z)}{a(z)}=\displaystyle\lim_{z\to 0}\frac{c_0z^2+c_1z^3+\ldots+c_{k+1}z^{k+3}}{a_1z+a_2z^2+\ldots+a_{k+3}z^{k+3}}=\displaystyle\lim_{z\to 0}\frac{c_0z+c_1z^2+\ldots+c_{k+1}z^{k+2}}{a_1+a_2z+\ldots+a_{k+3}z^{k+2}}$$
 and
$$\displaystyle\lim_{z\to 0}\frac{z^3d(z)}{a(z)}=\displaystyle\lim_{z\to 0}\frac{d_0z^3+d_1z^4+\ldots+d_{k}z^{k+3}}{a_1z+a_2z^2+\ldots+a_{k+3}z^{k+3}}=\displaystyle\lim_{z\to 0}\frac{d_0z^2+d_1z^3+\ldots+d_{k}z^{k+2}}{a_1+a_2z+\ldots+a_{k+2}z^{k+1}}$$
 provided that  $a_1\neq0$. Thence (\ref{2charsing1}) is of the form (\ref{2charsing2}).
}
\end{proof}

\subsection{Regular singularities in order three}

The simplest example  of a third order equation that has a regular singular point at the origin is a \textit{equation of Euler}
\begin{equation}\label{eqs7}
L(y):=x^3y^{\prime\prime\prime}+ax^2y^{\prime\prime}+bxy^\prime+cy=0,
\end{equation}
where $a,b,c$ are constants. Let us consider that $x>0$. The idea is to change variables  $x=e^z$. Suppose that $\varphi$ is a solution of (\ref{eqs7}) and define $\tilde{\varphi}(z)=\varphi(e^z)$ thus we have

$$\frac{d\varphi}{dx}\big(e^z\big)=e^{-z}\frac{d\tilde{\varphi}}{dz}(z),$$
$$\frac{d^2\varphi}{dx^2}\big(e^z\big)=e^{-2z}\big(\frac{d^2\tilde{\varphi}}{dz^2}(z)-\frac{d\tilde{\varphi}}{dz}(z)\big),$$

$$\frac{d^3\varphi}{dx^3}\big(e^z\big)=e^{-3z}\big(\frac{d^3\tilde{\varphi}}{dz^3}(z)-3\frac{d^2\tilde{\varphi}}{dz^2}(z)+2\frac{d\tilde{\varphi}}{dz}(z)\big).$$

Since, from (\ref{eqs7})

$$ e^{3z}\frac{d^3\varphi}{dx^3}\big(e^z\big)+ae^{2z}\frac{d^2\varphi}{dx^2}\big(e^z\big)+be^z\frac{d\varphi}{dx}\big(e^z\big)+c\varphi\big(e^z\big)=0,$$
we have:
$$\tilde{\varphi}^{\prime\prime\prime}(z)+(a-3)\tilde{\varphi}^{\prime\prime}(z)+(2-a+b)\tilde{\varphi}^{\prime}(z)+c\tilde{\varphi}(z)=0 $$
Hence $\tilde{\varphi}$ satisfies the equation:
\begin{equation}\label{eqs8}
\tilde{L}(y)=y^{\prime\prime\prime}+(a-3)y^{\prime\prime}+(2-a+b)y^{\prime}+cy=0.
\end{equation}
The characteristic polynomial associate to (\ref{eqs8}) is
$$r^3+(a-3)r^2+(2-a+b)r+c=0$$o qual is can be written  equivalently
\begin{equation}\label{eqs9}
q(r)=r(r-1)(r-2)+ar(r-1)+br+c=0
\end{equation}
this polynomial is called \textit{indicial polynomial} associate to (\ref{eqs7}). Let $r_1,r_2,r_2$ be the roots of the polynomial (\ref{eqs9}). There are three cases:
\begin{itemize}
\item All roots are equal: $r_1=r_2=r_3$. Hence $\{e^{r_1z},ze^{r_1z},z^2e^{r_1z}\}$ is base of the solution space of  equation (\ref{eqs8}) and considering the change of coordinate $x=e^z$ we have that $\{x^{r_1},\log x\;x^{r_1},(\log x)^2\;x^{r_1}\}$ is
a base of the solution space of  equation (\ref{eqs7}).
\item Two equal roots and one different: $r_1=r_2\neq r_3$. Hence $\{e^{r_1z},ze^{r_1z},e^{r_3z}\}$ is base of the solution space of  equation (\ref{eqs8}) and performing the change of coordinate $x=e^z$ we have that $\{x^{r_1},\log x\;x^{r_1},x^{r_3}\}$ is a base of the solution space of  equation (\ref{eqs7}).
\item All roots are different: $r_1\neq r_2$, $r_2\neq r_3$ and $r_1\neq r_3$ Hence $\{e^{r_1z},e^{r_2z},e^{r_3z}\}$ is base of the solution space of  equation (\ref{eqs8}) and using the change of coordinate $x=e^z$ we have that $\{x^{r_1},x^{r_2},x^{r_3}\}$ is a base of the solution space of  equation (\ref{eqs7}).
\end{itemize}
\subsection{Third order equations with regular singular points}

A  equation of third order with a  regular singular point at $x_0$, has the form:
\begin{equation}\label{eqs10}
(x-x_0)^3y^{\prime\prime\prime}+a(x)(x-x_0)^2y^{\prime\prime}+b(x)(x-x_0)y^\prime+c(x)y=0,
\end{equation}
where $a,b$ are analytic at $x_0$. Hence, $a,b,c$ can be written  in power series of the following form:
$$a(x)=\sum^\infty_{k=0}\alpha_k(x-x_0)^k,\;b(x)=\sum^\infty_{k=0}\beta_k(x-x_0)^k,\;c(x)=\sum^\infty_{k=0}\gamma_k(x-x_0)^k,$$
which are convergent in a certain interval $|x-x_0|<R_0$, for some $R_0>0$. We shall look for solutions of (\ref{eqs10}) in a neighborhood  of $x_0$. For the sake of simplicity of notation, we shall assume that $x_0=0$.
If $x_0\neq0$, is easy to transform (\ref{eqs10}) in  an equivalent  equation  with a  regular singular point at the origin. Put $t=x-x_0$, and
$$\tilde{a}(t)=a(x_0+t)=\sum^\infty_{k=0}\alpha_k t^k,\;\tilde{b}(t)=b(x_0+t)=\sum^\infty_{k=0}\beta_kt^k,\;\tilde{c}(t)=c(x_0+t)=\sum^\infty_{k=0}\gamma_kt^k,$$
The power series for $\tilde{a},\tilde{b},\tilde{c}$ converge in the interval $|t|<R_0$ centered at $t=0$. Let $\varphi$ be  any solution of (\ref{eqs10}), and define $\tilde{\varphi}$ of the following form:
$$\tilde{\varphi}(t)=\varphi(x_0+t).$$
Then
$$\frac{d\tilde{\varphi}}{dt}(t)=\frac{d\varphi}{dx}(x_0+t),\;\;\frac{d^2\tilde{\varphi}}{dt^2}(t)=\frac{d^2\varphi}{dx^2}(x_0+t),\;\;\;\frac{d^3\tilde{\varphi}}{dt^3}(t)=\frac{d^3\varphi}{dx^3}(x_0+t),$$
and therefore we see that $\tilde{\varphi}$ satisfies the equation:
\begin{equation}\label{eqs11}
t^3u^{\prime\prime\prime}+\tilde{a}(t)t^2u^{\prime\prime}+\tilde{b}(t)tu^\prime+\tilde{c}(t)u=0,
\end{equation}
where now $u^\prime=\frac{du}{dt}$. This is an equation with a  regular singular point at $t=0$. Conversely, if $\tilde{\varphi}$ satisfies (\ref{eqs11}), a function $\varphi$ given by
$$\varphi(x)=\tilde{\varphi}(x-x_0)$$satisfies (\ref{eqs10}). In this sense (\ref{eqs11}) is equivalent to (\ref{eqs10}).

Com $x_0=0$ in (\ref{eqs10}), we can write this equation in the following form:
\begin{equation}\label{eqs12}
L(y)=x^3y^{\prime\prime\prime}+a(x)x^2y^{\prime\prime}+b(x)xy^\prime+c(x)y=0,
\end{equation}
where $a,b,c$ are analytic at the origin, and moreover has a power series expansion, of the following form:
\begin{equation}\label{eqs13}
a(x)=\sum^\infty_{k=0}\alpha_kx^k,\;b(x)=\sum^\infty_{k=0}\beta_kx^k,\;c(x)=\sum^\infty_{k=0}\gamma_kx^k,
\end{equation}
which are convergent in an interval $|x|<R_0$, $R_0>0$. Equation of Euler is a special case of (\ref{eqs12}), with $a,b,c$ constants. The fact that the term of greater order (terms that have  $x$ as factor) in the series (\ref{eqs13}), is to introduce series in the  solutions of (\ref{eqs12}).

\subsection{Proof of theorem~\ref{thmfrobb3I}}
\begin{proof}{\rm
Let $\varphi$ be a solution of (\ref{eq1}) of the form
\begin{equation}\label{eq2}
\varphi(x)=x^r\sum^{\infty}_{n=0}d_nx^n
\end{equation}
where $d_0\neq0$. Since $a(x),b(x)$ and $c(x)$ are analytic in $|x|<R$ we have that
\begin{equation}\label{eq3}
a(x)=\sum^\infty_{n=0}a_nx^n,\;\;\;b(x)=\sum^\infty_{n=0}b_nx^n\;\;\;\mbox{ and }\;\;\;c(x)=\sum^\infty_{n=0}c_nx^n.
\end{equation}
Then
$$\varphi^\prime(x)=x^{r-1}\sum^{\infty}_{n=0}(n+r)d_nx^n $$
$$\varphi^{\prime\prime}(x)=x^{r-2}\sum^{\infty}_{n=0}(n+r)(n+r-1)d_nx^n$$
$$\varphi^{\prime\prime\prime}(x)=x^{r-3}\sum^{\infty}_{n=0}(n+r)(n+r-1)(n+r-2)d_nx^n$$
and thus we have
$$\begin{array}{l c l}c(x)\varphi(x)&=&x^r\big(\sum^\infty_{n=0}d_nx^n\big)\big(\sum^\infty_{n=0}c_nx^n\big)=x^r\sum^\infty_{n=0}\tilde{c}_nx^n\end{array}$$where
$\tilde{c}_n=\sum^{n}_{j=0}d_jc_{n-j}$.
$$\begin{array}{l c l}xb(x)\varphi^\prime(x)&=&x^r\big(\sum^\infty_{n=0}(n+r)d_nx^n\big)\big(\sum^\infty_{n=0}b_nx^n\big)= x^r\sum^\infty_{n=0}\tilde{b}_nx^n\end{array}$$where
$\tilde{b}_n=\sum^{n}_{j=0}(j+r)d_jb_{n-j}$.
$$\begin{array}{l c l}x^2a(x)\varphi^{\prime\prime}(x)&=&x^r\big(\sum^\infty_{n=0}(n+r)(n+r-1)d_nx^n\big)\big(\sum^\infty_{n=0}a_nx^n\big)=x^r\sum^\infty_{n=0}\tilde{a}_nx^n\end{array}$$where
$\tilde{a}_n=\sum^{n}_{j=0}(j+r)(j+r-1)d_ja_{n-j}$.

$$\begin{array}{l c l}x^3\varphi^{\prime\prime\prime}(x)&=&x^r\sum^{\infty}_{n=0}(n+r)(n+r-1)(n+r-2)d_nx^n.\end{array}$$
Therefore
$$L(\varphi)(x)=x^r\sum^\infty_{n=0}\big[(n+r)(n+r-1)(n+r-2)d_n+\tilde{a}_n+\tilde{b}_n+\tilde{c}_n\big]x^n$$e according to (\ref{eq1}) we have

$$[\;\;\;]_n=(n+r)(n+r-1)(n+r-2)d_n+\tilde{a}_n+\tilde{b}_n+\tilde{c}_n=0,\;\;\;n=0,1,2,\ldots$$
Using the definitions of $\tilde{a}_n,\tilde{b}_n$ and $\tilde{c}_n$ we can write $[\;\;\;]_n$ since
$$\begin{array}{lcl} [\;\;\;]_n&=& (n+r)(n+r-1)(n+r-2)d_n+\sum^{n}_{j=0}(j+r)(j+r-1)d_ja_{n-j}\\&&\\&&+\sum^{n}_{j=0}(j+r)d_jb_{n-j}+\sum^{n}_{j=0}d_jc_{n-j}\\&&\\&=&[(n+r)(n+r-1)(n+r-2)+(n+r)(n+r-1)a_0+(n+r)b_0+c_0]d_n\\&&\\&&+\sum^{n-1}_{j=0}[(j+r)(j+r-1)a_{n-j}+(j+r)b_{n-j}+c_{n-j}]d_j
\end{array}$$
For $n=0$ we have
\begin{equation}\label{eq4} r(r-1)(r-2)+r(r-1)a_0+rb_0+c_0=0
\end{equation}
provided that $d_0\neq0$. We see that
\begin{equation}\label{eq5}
[\;\;\;]_n=q(n+r)d_n+e_n,\;\;\;\;\;n=1,2,\ldots
\end{equation}
where
\begin{equation}\label{eq6}
e_n=\sum^{n-1}_{j=0}[(j+r)(j+r-1)a_{n-j}+(j+r)b_{n-j}+c_{n-j}]d_j,\;\;\;\;\;n=1,2,\ldots
\end{equation}
Note that $e_n$ is a linear combination of $d_0,d_1,\ldots,d_{n-1}$, whose coefficients involve the functions $a,b,c$ and $r$. Letting  $r$ and $d_0$ be unknown, for the moment we solve equations (\ref{eq5}) and (\ref{eq6}) in terms of $d_0$ and $r$. These solutions are represented  according to $D_n(r)$, and a $e_n$ by $E_n(r)$.
Hence
$$E_1(r)=(r(r-1)a_1+rb_1+c_1)d_0\;\;\;\;\;\;D_1(r)=-\frac{E_1(r)}{q(1+r)},$$
and in general:
\begin{equation}\label{eq7}
E_n(r)=\sum^{n-1}_{j=0}[(j+r)(j+r-1)a_{n-j}+(j+r)b_{n-j}+c_{n-j}]D_j(r)
\end{equation}

\begin{equation}\label{eq8}
D_n(r)=-\frac{E_n(r)}{q(n+r)},\;\;\;\;\;n=1,2,\ldots
\end{equation}

The terms $D_n$ thus given, are rational functions of $r$, whose only indefinite points are the points $r$ for which $q(r+n)=0$ for some $n=1,2,\ldots$. Among these, there exist only two possible points. Let us define $\varphi$ as follows:
\begin{equation}\label{eq9}
\varphi(x,r)=d_0x^r+x^r\sum^{\infty}_{n=1}D_n(r)x^n.
\end{equation}
If the series (\ref{eq9}) converges for $0<x<R$, then we have:
\begin{equation}\label{eq10}
L(\varphi)(x,r)=d_0q(r)x^r.
\end{equation}
Now we have a following situation: If a $\varphi$ given by (\ref{eq1}) is a solution of (\ref{eq2}), then $r$ must be a root of the indicial polynomial $q$, and then $d_n$ ($n\geq1$) are given exclusively in terms of $d_0$ and $r$ according to os $D_n(r)$ of (\ref{eq8}), provided that $q(n+r)\neq0$, $n=1,2,\ldots$.

Conversely, if $r$ is a root of $q$ and if the $D_n(r)$ are given (i.e., $q(n+r)\neq0$ for $n=1,2,\ldots$) then a function $\varphi$ given by $\varphi(x)=\varphi(x,r)$ is a solution of (\ref{eq2}) for every choice of $d_0$, provided that  that the series (\ref{eq9}) is convergent.

We have $r_1,r_2,r_3$ are as roots of $q$ with $\mbox{Re}(r_1)\geq \mbox{Re}(r_2)\geq\mbox{Re}(r_3)$. Then $q(n+r_1)\neq0$ for every $n=1,2,\ldots$.
Hence, $D_n(r_1)$ exists for every $n=1,2,\ldots$, and putting $d_0=D_0(r_1)=1$ we have that a function $\varphi_1$ given by
\begin{equation}\label{eq11}
\varphi_1(x)=x^{r_1}\sum^\infty_{n=0}D_n(r_1)x^n,\;\;\;\;D_0(r_1)=1,
\end{equation}
is a solution of (\ref{eq2}), provided that the series converges. Since $r_1-r_2\notin\mathbb{Z}^+_0$, $r_1-r_3\notin\mathbb{Z}^+_0$ and $r_2-r_3\notin\mathbb{Z}^+_0$
then $$q(n+r_2)\neq0\;\;\;\;\mbox{ and }\;\;\;\;q(n+r_3)\neq0 $$for every $n=1,2,\ldots$, then $D_n(r_2)$ and $D_n(r_3)$ are well defined for $n=1,2,\ldots$ hence the functions $\varphi_2$ and $\varphi_3$ are given by

\begin{equation}\label{eq12}
\varphi_2(x)=x^{r_2}\sum^\infty_{n=0}D_n(r_2)x^n,\;\;\;\;D_0(r_2)=1,
\end{equation}
and

\begin{equation}\label{eq13}
\varphi_3(x)=x^{r_3}\sum^\infty_{n=0}D_n(r_3)x^n,\;\;\;\;D_0(r_3)=1.
\end{equation}
Now let us show the convergence of the series
\begin{equation}\label{eq14}
\sum^{\infty}_{n=0}D_n(r)x^n
\end{equation}
where $D_n(r)$ are given recursively by
\begin{equation}\label{eq15}
\begin{array}{c }D_0(r)=1,\\ \\ q(n+r)D_n(r)=-\sum^{n-1}_{j=0}[(j+r)(j+r-1)a_{n-j}+(j+r)b_{n-j}+c_{n-j}]D_j(r),\;n=1,2,\ldots\end{array}
\end{equation}
to see (\ref{eq7}) and (\ref{eq8}). We need to show that the series (\ref{eq14}) converge for $|x|<R$ if $r=r_1$, if $r=r_2$ and if $r=r_3$, provided that $r_1-r_2\notin\mathbb{Z}^+_0$, $r_1-r_3\notin\mathbb{Z}^+_0$ and $r_2-r_3\notin\mathbb{Z}^+_0$.
Note that
$$q(r)=(r-r_1)(r-r_2)(r-r_3),$$
and according to that we have
$$\begin{array}{c}  q(n+r_1)=n(n+r_1-r_2)(n+r_1-r_3),\\ \\
q(n+r_2)=n(n+r_2-r_1)(n+r_2-r_3),\\ \\
q(n+r_3)=n(n+r_3-r_1)(n+r_3-r_2).
\end{array}$$
Henceforth:
\begin{equation}\label{eq16}
\begin{array}{c}  |q(n+r_1)|\geq n(n-|r_1-r_2|)(n-|r_1-r_3|),\\ \\
|q(n+r_2)|\geq n(n-|r_1-r_2|)(n-|r_2-r_3|),\\ \\
|q(n+r_3)|\geq n(n-|r_1-r_3|)(n-|r_2-r_3|).
\end{array}
\end{equation}
Let now $\rho$ be any number that satisfies the inequality $0<\rho<R$. Since the series defined in (\ref{eq14}) are convergent for $|x|=\rho$ exists a constant $M>0$ such that

\begin{equation}\label{eq17}
|a_j|\rho^j\leq M,\;\;\;\;\;|b_j|\rho^j\leq M\;\;\;\;\;|c_j|\rho^j\leq M\;\;\;\;\;j=0,1,2,\ldots
\end{equation}
Replacing (\ref{eq16}) and (\ref{eq17}) in equation (\ref{eq15}), we obtain:
\begin{equation}\label{eq18}
\begin{array}{l}n(n-|r_1-r_2|)(n-|r_1-r_3|)|D_n(r_1)|\leq \\ \\
\hspace{3cm}M\sum^{n-1}_{j=0}( (j+|r_1|)(j+|r_1-1|)+(j+|r_1|)+1)\rho^{j-n}|D_j(r_1)|,\end{array}
\end{equation}
for $n=1,2,\ldots$. Let $N_1$ be the integer that satisfies the inequality
$$N_1-1\leq|r_1-r_2|<N_1, $$ and $N_2$ the integer that satisfies the inequality
$$N_2-1\leq|r_1-r_3|<N_2, $$. Consider the $N=\max\{N_1,N_2\}$ and we shall define $\gamma_0,\gamma_1,\ldots$ of the following form:
$$\gamma_0=D_0(r_1)=1,\;\;\;\gamma_n=|D_n(r_1)|,\;\;\;n=1,2,\ldots,N-1,$$
and
\begin{equation}\label{eq19}
n(n-|r_1-r_2|)(n-|r_1-r_3|)\gamma_n=M\sum^{n-1}_{j=0}( (j+|r_1|)(j+|r_1-1|)+(j+|r_1|)+1)\rho^{j-n}\gamma_j,
\end{equation}
for $n=N,N+1,\ldots$. Then, comparing the definition of $\gamma_n$ with (\ref{eq18}), we see that
\begin{equation}\label{eq20}
|D_n(r_1)|\leq \gamma_n,\;\;\;\;\;\;n=0,1,2,\ldots
\end{equation}
Hence we will shows that the series
\begin{equation}\label{eq21}
\sum^\infty_{n=0}\gamma_nx^n
\end{equation}
is convergent for $|x|<\rho$. Replacing $n$ according to $n+1$ in (\ref{eq19}) we have:
$$
\rho(n+1)(n+1-|r_1-r_2|)(n+1-|r_1-r_3|)\gamma_{n+1}=$$
$$
[n(n-|r_1-r_2|)(n-|r_1-r_3|)
+ M((n+|r_1|)(n+|r_1-1|)+(n+|r_1|)+1)]\gamma_n
$$
for $n\geq N$. Hence
$$\big|\frac{\gamma_{n+1}x^{n+1}}{\gamma_{n}x^{n}}\big|=\frac{[n(n-|r_1-r_2|)(n-|r_1-r_3|)+M((n+|r_1|)(n+|r_1-1|)+(n+|r_1|)+1)]}{\rho(n+1)(n+1-|r_1-r_2|)(n+1-|r_1-r_3|)}|x|$$converge a $|x|/\rho$ when $n\to\infty$. Hence according to the quotient test  the series (\ref{eq21}) converges for $|x|<\rho$. Using (\ref{eq20}) and according to the comparison test for series, we conclude that the series
$$\sum^\infty_{n=0}D_n(r_1)x^n,\;\;\;\;D_0(r_1)=1,$$ also converges for $|x|<\rho$. But since $\rho$ is any number that satisfies the inequality $0<\rho<R$, this already shows that  that this series converges for $|x|<R$.
Replacing $r_1$ by   $r_2$ in all the above computations, we show that
$$\sum^\infty_{n=0}D_n(r_2)x^n,\;\;\;\;D_0(r_2)=1,$$converge for $|x|<R$ provided that $r_1-r_2\notin\mathbb{Z}^+_0$ and $r_2-r_3\notin\mathbb{Z}^+_0$.
Also, replacing $r_1$ by $r_3$ in the above computations, we show that
$$\sum^\infty_{n=0}D_n(r_3)x^n,\;\;\;\;D_0(r_3)=1,$$converge for $|x|<R$ provided that $r_1-r_3\notin\mathbb{Z}^+_0$ and $r_2-r_3\notin\mathbb{Z}^+_0$.}
\end{proof}
\begin{Remark}{\rm Since we already know that in (\ref{eq11}), (\ref{eq12}) and (\ref{eq13}) the coefficients $d_n,\tilde{d_n},\hat{d_n}$ that appear in the  solutions $\varphi_1,\varphi_2,\varphi_3$ of theorem~\ref{thmfrobb3I} are given according to
$$d_n=D_n(r_1),\;\;\;\tilde{d_n}=D_n(r_2)\;\;\;\mbox{ and }\;\;\;\hat{d_n}=D_n(r_3),\;\;\\;n=1,2,\ldots,$$where the $D_n(r)$ are solutions of equations (\ref{eq7}) and (\ref{eq8}) with $D_0(r)=1$.}
\end{Remark}

\subsection{Proof of theorem~\ref{thmfrobb3II}}
We shall work with a  formal method for finding out the form of  the solutions. For such $x$, we have, according to (\ref{eq9}) and (\ref{eq10}),
\begin{equation}\label{eq22}
L(\varphi)(x,r)=d_0q(r)x^r,
\end{equation}
where $\varphi$ is  given by
\begin{equation}\label{eq23}
\varphi(x,r)=d_0x^r+x^r\sum^\infty_{n=1}D_n(r)x^n.
\end{equation}
The functions $D_n(r)$ are given by the recurrence given by the formulas:
\begin{equation}\label{eq24}
\begin{array}{c} D_0(r)=d_0\neq0\\ \\ q(n+r)D_n(r)=-E_n(r)\\ \\ E_n(r)=\sum^{n-1}_{j=0}[(j+r)(j+r-1)a_{n-j}+(j+r)b_{n-j}+c_{n-j}]D_j(r),\;\;\;n=1,2,\ldots;\end{array}
\end{equation}
to see (\ref{eq7}) and (\ref{eq8}).
\begin{proof}$\;${\rm
\begin{enumerate}
\item[(i)] We have
$$q(r_1)=0,\;\;\;\;q^\prime(r_1)=0\;\;\;\;\;q^{\prime\prime}(r_1)=0,$$
and this clearly suggests the derivation of (\ref{eq22}) with respect to $r$.
We obtain
\begin{equation}\label{eq25}
\begin{array}{lcl} \frac{\partial}{\partial r}L(\varphi)(x,r)&=&L\big(\frac{\partial\varphi}{\partial r}\big)(x,r)=d_0[q^\prime(r)+(\log x)q(r)]x^r, \end{array}
\end{equation}
and we see that if $r=r_1=r_2=r_3$, $d_0=1$ we have
$$\varphi_2(x)=\frac{\partial \varphi}{\partial r}(x,r_1)$$ which will give us a solution of the equation, provided that the series converges.
A straightforward computation from (\ref{eq23}), we have:
$$\begin{array}{lcl} \varphi_2(x)&=& x^{r_1}\sum^{\infty}_{n=0}D_n^\prime(r_1)x^n+(\log x)x^{r_1}\sum^\infty_{n=0}D_n(r_1)x^n\\&&\\&=&x^{r_1}\sum^{\infty}_{n=0}D_n^\prime(r_1)x^n+(\log x)\varphi_1(x) \end{array}$$
where $\varphi_1$ is the already obtained solution:
$$\varphi_1(x)=x^{r_1}\sum^{\infty}_{n=0}D_n(r_1)x^n,\;\;\;D_0(r_1)=1.$$
Note that $D_n^\prime(r_1)$ exists for every $n=0,1,2,\ldots$, since os $D_n$ are rational functions of $r$ whose denominator does not vanish in $r=r_1$. Also $D_0(r)=1$ implies that $D_0^\prime(r_1)=0$, and therefore the series that in $\varphi_2$ is multiplying $x^{r_1}$ starts with a first power of $x$.
In order to find other solution we take the derivative of (\ref{eq25}) with respect to $r$.
We obtain
\begin{equation}\label{eq26}
\begin{array}{lcl} \frac{\partial^2}{\partial r^2}L(\varphi)(x,r)&=&L\big(\frac{\partial^2\varphi}{\partial r^2}\big)(x,r)\\&&\\&=&d_0[q^{\prime\prime}(r)+2(\log x)q^\prime(r)+(\log x)^2q(r)]x^r, \end{array}
\end{equation}
and we see that if $r=r_1=r_2=r_3$, $d_0=1$ we have
$$\varphi_3(x)=\frac{\partial^2 \varphi}{\partial r^2}(x,r_1)$$which will give us a solution of the equation, provided that the series converges.
A straightforward computation from (\ref{eq23}), we have:
$$\varphi_3(x)=x^{r_1}\sum^{\infty}_{n=0}D_n^{\prime\prime}(r_1)x^n+(\log x)x^{r_1}\sum^\infty_{n=0}D_n^\prime(r_1)x^n+(\log x)[x^{r_1}\sum^{\infty}_{n=0}D_n^\prime(r_1)x^n+(\log x)x^{r_1}\sum^\infty_{n=0}D_n(r_1)x^n]$$
and therefore $$\varphi_3(x)=
x^{r_1}\sum^{\infty}_{n=0}D_n^{\prime\prime}(r_1)x^n+(\log x)x^{r_1}\sum^\infty_{n=0}D_n^\prime(r_1)x^n+(\log x)[x^{r_1}\sum^{\infty}_{n=0}D_n^\prime(r_1)x^n+(\log x)\varphi_1(x)]$$
$$=x^{r_1}\sum^{\infty}_{n=0}D_n^{\prime\prime}(r_1)x^n+2(\log x)x^{r_1}\sum^\infty_{n=0}D_n^\prime(r_1)x^n+(\log x)^2\varphi_1(x)$$
$$=x^{r_1}\sum^{\infty}_{n=0}D_n^{\prime\prime}(r_1)x^n+2(\log x)\big[x^{r_1}\sum^\infty_{n=0}D_n^\prime(r_1)x^n+(\log x)\varphi_1(x)\big]-(\log x)^2\varphi_1(x)$$
$$=x^{r_1}\sum^{\infty}_{n=0}D_n^{\prime\prime}(r_1)x^n+2(\log x)\varphi_2(x)-(\log x)^2\varphi_1(x).$$

Note that $D_n^{\prime\prime}(r_1)$ also exists for every $n=0,1,2,\ldots$, since os $D_n^\prime$ are rational functions of $r$ whose denominator does not vanish in $r=r_1$. Also $D_0(r)=1$ implies that $D_0^{\prime\prime}(r_1)=D_0^{\prime}(r_1)=0$, and therefore the series that in $\varphi_3$ is multiplying $x^{r_1}$ starts with a first power of $x$.
\item[(ii)] Since $r_1=r_2$ proceed as in item [(i)] and obtain:
$$\varphi_2(x)=x^{r_1}\sum^{\infty}_{n=0}D_n^\prime(r_1)x^n+(\log x)\varphi_1(x)$$
where $\varphi_1$ is the already obtained solution:
$$\varphi_1(x)=x^{r_1}\sum^{\infty}_{n=0}D_n(r_1)x^n,\;\;\;D_0(r_1)=1.$$

Suppose that that $r_1=r_3+m$, where $m\in\mathbb{Z}^+$. If $d_0$ is given,
$$D_1(r_3),\cdots,D_{m-1}(r_3)$$ all  exist and have  finite values, but since
\begin{equation}\label{eq27}
q(r+m)D_m(r)=-E_m(r),
\end{equation}
we encounter some difficulties in the computation  of $D_m(r_3)$. Now
$$q(r)=(r-r_1)^2(r-r_3),$$
and therefore:
$$q(r+m)=(r-r_3)^2(r+m-r_3).$$
If $E_m(r)$ has $(r-r_3)^2$ as a factor (i.e., $E_m(r_3)=E^\prime_m(r_3)=0$) this implies that we can cancel the same factor in $q(r+m)$, and then (\ref{eq27}) gives $D_m(r_3)$ in form of finite number. Then:
$$D_{m+1}(r_3),\;D_{m+2}(r_3),\ldots$$ all of them  exist. In a  situation also especial, we obtain a solution $\varphi_3$ of the form
$$\varphi_3(x)=x^{r_3}\sum^\infty_{n=0}D_n(r_3)x^n,\;\;\;D_0(r_3)=1. $$
It is always possible to  arrange  of such form that $\tilde{E}_m(r_3)=0$, choosing
$$\tilde{D}_0(r)=(r-r_3)^2.$$
Observing (\ref{eq24}) we see that $\tilde{E}_n(r)$ is linear homogeneous em
$$\tilde{D}_0(r),\ldots,\tilde{D}_{n-1}(r),$$
and therefore that $\tilde{E}_n(r)$ has the $\tilde{D}_0(r)=(r-r_3)^2$ as a factor. Hence, $\tilde{D}_m(r_3)$ will exist in form of finite number. Putting
\begin{equation}\label{eq28}
\psi(x,r)=x^r\sum^{\infty}_{n=0}\tilde{D}_n(r)x^n,\;\;\;\;\tilde{D}_0(r)=(r-r_3)^2,
\end{equation}
we find formally that
\begin{equation}\label{eq29}
L(\psi)(x,r)=(r-r_3)^2q(r)x^r.
\end{equation}
Putting $r=r_3$ we obtain formally a solution $\psi$ given by
$$\psi(x)=\psi(x,r_3).$$
Although, $\tilde{D}_0(r_3)=\tilde{D}_1(r_3)=\cdots=\tilde{D}_{m-1}(r_3)=0$. Hence, a series that define $\psi$ really starts with a $m$-th power of $x$, and then $\psi$ has the form:
$$\psi(x)=x^{r_3+m}\sigma(x)=x^{r_1}\sigma(x),$$
where $\sigma$ is a power series.
It is not difficult to see that $\psi$ is precisely a constant multiple of the solution $\varphi_1$ that  is already known.
In order to find a solution really associate with $r_3$, we take the derivative of (\ref{eq29}) with respect to $r$, we obtain:

$$\begin{array}{lcl} \frac{\partial}{\partial r}L(\psi)(x,r)&=&L\big(\frac{\partial\psi}{\partial r}\big)(x,r)=[2(r-r_3)q(r)+(r-r_3)^2(q^\prime(r)+(\log x)q(r))]x^r, \end{array}$$
Now, putting $r=r_3$ we find a function $\varphi_3$ given by
$$\varphi_3(x)=\frac{\partial \psi}{\partial r}(x,r_3)$$
is a solution, provided that the series involved are convergent. We have the form
$$\varphi_3(x)=x^{r_3}\sum^\infty_{n=0}\tilde{D}^\prime_n(r_3)x^n+(\log x)x^{r_3}\sum^\infty_{n=0}\tilde{D}_n(r_3)x^n, $$
where $\tilde{D}_0(r)=(r-r_3)^2$. Since
$$\tilde{D}_0(r_3)=\cdots=\tilde{D}_{m-1}(r_3)=0,$$we can rewrite this in the following form:
$$\varphi_3(x)=x^{r_3}\sum^\infty_{n=0}\tilde{D}^\prime_n(r_3)x^n+c(\log x)\varphi_1(x), $$where $c$ is some constant.
\item[(iii)] Since in item (ii) we obtain:
$$\varphi_3(x)=x^{r_2}\sum^{\infty}_{n=0}D_n^\prime(r_2)x^n+(\log x)\varphi_2(x)$$
where $\varphi_2$ is the already obtained solution:
$$\varphi_2(x)=x^{r_2}\sum^{\infty}_{n=0}D_n(r_2)x^n,\;\;\;D_0(r_2)=1.$$

Also since in item (ii) we have that $r_1=r_2+m$, where $m\in\mathbb{Z}^+$. The other desired solution has the form
$$\varphi_1(x)=x^{r_2}\sum^\infty_{n=0}\tilde{D}^\prime_n(r_2)x^n+(\log x)x^{r_3}\sum^\infty_{n=0}\tilde{D}_n(r_2)x^n, $$
where $\tilde{D}_0(r)=(r-r_2)^2$. Since
$$\tilde{D}_0(r_2)=\cdots=\tilde{D}_{m-1}(r_2)=0,$$we can write this in the following form:
$$\varphi_1(x)=x^{r_2}\sum^\infty_{n=0}\tilde{D}^\prime_n(r_2)x^n+c(\log x)\varphi_2(x), $$where $c$ is some constant.

\item[(iv)] Since in item (ii) we have that $r_1=r_2+m$, where $m\in\mathbb{Z}^+$ and $r_2=r_3+p$, , where $p\in\mathbb{Z}^+$. The desired solution has the form
$$\varphi_2(x)=x^{r_2}\sum^\infty_{n=0}\tilde{D}^\prime_n(r_2)x^n+(\log x)x^{r_2}\sum^\infty_{n=0}\tilde{D}_n(r_2)x^n, $$
where $\tilde{D}_0(r)=r-r_2$. Since
$$\tilde{D}_0(r_2)=\cdots=\tilde{D}_{m-1}(r_2)=0,$$we can write this in the following form:
$$\varphi_2(x)=x^{r_2}\sum^\infty_{n=0}\tilde{D}^\prime_n(r_2)x^n+c(\log x)\varphi_1(x),$$where $c$ is some constant and
$$\varphi_1(x)=x^{r_1}\sum^{\infty}_{n=0}D_n(r_1)x^n,\;\;\;D_0(r_1)=1.$$ Analogously another solution has the form
$$\varphi_3(x)=x^{r_3}\sum^\infty_{n=0}\hat{D}^\prime_n(r_3)x^n+(\log x)x^{r_3}\sum^\infty_{n=0}\hat{D}_n(r_3)x^n, $$
where $\hat{D}_0(r)=r-r_3$. Since
$$\hat{D}_0(r_3)=\cdots=\hat{D}_{p-1}(r_3)=0,$$ we can write this in the following form:
$$\varphi_3(x)=x^{r_3}\sum^\infty_{n=0}\hat{D}^\prime_n(r_3)x^n+\tilde{c}(\log x)\varphi_2(x),$$where $\tilde{c}$ is some constant.
\end{enumerate}}
\end{proof}
\begin{Remark}$\;${\rm
\begin{enumerate}
\item The method used above is based in the classical ideas of Frobenius and we shall refer to it as  \textit{the method of Frobenius for order three}. All the series obtained above converge for $|x|<R$.
\item The solutions for $x<0$, can be obtained by replacing
$$x^{r_1},x^{r_2},x^{r_3},\log x$$ in the expansion  according to
$$|x|^{r_1},|x|^{r_2},|x|^{r_3},\log |x|$$ respectively.
\end{enumerate}  }
\end{Remark}

\subsection{Examples}

\begin{Example}
\label{Example:realcomplex}{\rm Consider the equation
\begin{equation}\label{exemtiob}
x^3y^{\prime\prime\prime}+x^2y^{\prime\prime}+xy^\prime+x^3y=0,\;x>0.
\end{equation}
Note that in this case, $a(x)=1$, $b(x)=1$ and $b(x)=x^3$ which are analytic at $0$. The indicial polynomial is given
$$q(r)=r(r-1)(r-2)+r(r-1)a(0)+rb(0)+c(0)=r(r-1)(r-2)+r(r-1)+r=r(r^2-2r+2).$$
The roots are $r_1=1+i$, $r_2=1-i$ and $r_3=0$. Since $r_1-r_2=2i\notin\mathbb{Z}^+_0$, $r_1-r_3=1+i\notin\mathbb{Z}^+_0$ and $r_2-r_3=1-i\notin\mathbb{Z}^+_0$ according to Theorem~\ref{thmfrobb3I} we have three solutions of the form
$$\varphi_1(x)=x^{1+i}\sum^{\infty}_{n=0}a_nx^n\;\;\;(a_0=1),\;\;\;\varphi_2(x)=x^{1-i}\sum^{\infty}_{n=0}b_nx^n\;\;\;(b_0=1)$$
and
$$\varphi_3(x)=x^{0}\sum^{\infty}_{n=0}c_nx^n\;\;\;(c_0=1).$$
Substituting the solutions of  equation (\ref{exemtiob}) we obtain the following:
\begin{itemize}
\item For $\varphi_3$ we have the following recurrence relation for the coefficients
$$c_1=0,c_2=0\;\;\;\;\mbox{ and }\;\;\;n((n-1)^2+1)c_{n}+c_{n-3}=0\;\mbox{ for every }n\geq3.$$
Therefore, since $c_0=1$ we have that
$$\varphi_3(x)=1+\sum^{\infty}_{k=1}\frac{(-1)^k}{3^{k}k!(2^2+1)(5^2+1)\cdots((3k-1)^2+1) }x^{3k}.$$
\item For $\varphi_1$ we have the following recurrence relation for the coefficients
$$a_1=0,a_2=0\;\;\;\;\mbox{ and }\;\;\;n\big[\big(n+\frac{1+3i}{2}\big)^2+\frac{i}{2}\big]a_n+a_{n-3}=0\;\mbox{ for every }n\geq3.$$
Denote by $\alpha_0=\frac{1+3i}{2}$, $\beta_0=\frac{i}{2}$ and since $a_0=1$ we have that
$$\varphi_1(x)=x^{1+i}+\sum^{\infty}_{k=1}\frac{(-1)^k}{3^k\cdot k![(3+\alpha_0)^2+\beta_0][(6+\alpha_0)^2+\beta_0]\cdots [(3k+\alpha_0)^2+\beta_0]}x^{3k+1+i}.$$
\item For $\varphi_2$ we have the following recurrence relation for the coefficients
$$b_1=0,\;b_2=0\;\;\;\;\mbox{ and }\;\;\;n\big[\big(n+\frac{1-3i}{2}\big)^2-\frac{i}{2}\big]b_n+b_{n-3}=0\;\mbox{ for every }n\geq3.$$
Therefore, since $b_0=1$ we have that
$$\varphi_2(x)=x^{1-i}+\sum^{\infty}_{k=1}\frac{(-1)^k}{3^k\cdot k![(3+\overline{\alpha_0})^2+\overline{\beta_0}][(6+\overline{\alpha_0})^2+\overline{\beta_0}]\cdots [(3k+\overline{\alpha_0})^2+\overline{\beta_0}]}x^{3k+1-i}.$$
\end{itemize}
}
\end{Example}
\begin{Remark}{\rm Recall that, from complex numbers theory we have  $$x^{1\pm i}=x\cdot x^{\pm i}=xe^{\pm i\log x}=x\cos(\log x)\pm ix\sin(\log x).$$}
\end{Remark}

\begin{Example}[Bessel equation of  order zero for third order ODEs]{\rm Consider the equation
\begin{equation}\label{eqsl5}
x^3y^{\prime\prime\prime}+3x^2y^{\prime\prime}+xy^\prime+(x^3-\alpha^3)y=0,\;x>0
\end{equation}
where $\mbox{Re}(\alpha)\geq0$. Note that in this case, $a(x)=3$, $b(x)=1$ and $b(x)=x^3-\alpha^3$ which are analytic at $0$. Also
$$q(r)=r(r-1)(r-2)+r(r-1)a(0)+rb(0)+c(0)=r(r-1)(r-2)+3r(r-1)+r-\alpha^3=r^3-\alpha^3.$$
Let us study the case $\alpha=0$, in this case an equation is given by
$$x^3y^{\prime\prime\prime}+3x^2y^{\prime\prime}+xy^\prime+x^3y=0.$$
The indicial polynomial is given in this case according to $r^3=0$. Hence as roots are $r_1=r_2=r_3=0$, according to Theorem~\ref{thmfrobb3II} we have three solutions of the form
$$\varphi_1(x)=x^{0}\big(\sum^\infty_{n=0}a_nx^n\big),\;\;\varphi_2(x)=x^{1}\big(\sum^\infty_{n=0}b_nx^n\big)+(\log x)\varphi_1(x)$$
and
$$\varphi_3(x)=x^{1}\big(\sum^\infty_{n=0}c_nx^n\big)+2(\log x)\varphi_2(x)-(\log x)^2\varphi_1(x), $$
where $a_0\neq0$.

Substituting the solutions of  equation (\ref{eqsl5}) we obtain the following:
\begin{itemize}
\item For $\varphi_1$ we have the following recurrence relation for the coefficients
$$a_1=0,a_2=0\;\;\;\;\mbox{ and }\;\;\;n^3a_{n}+a_{n-3}=0\;\mbox{ for every }n\geq3.$$
Therefore, if we choose $a_0=1$ we have that
\begin{equation}\label{eqsl6}
\varphi_1(x)=\sum^{\infty}_{k=0}\frac{(-1)^k}{3^{3k}(k!)^3}x^{3k}=\sum^{\infty}_{k=0}\frac{(-1)^k}{(k!)^3}\big(\frac{x}{3}\big)^{3k}.
\end{equation}
\item For $\varphi_2$ we have the following recurrence relation for the coefficients:
$$b_0=b_1=0,\;b_2=\frac{1}{3^3},$$
$$(3n-2)^3b_{3n-3}+b_{3n-6}=0\;\;\mbox{ for every }n\geq 2,\;\;\;(3n-1)^3b_{3n-2}+b_{3n-5}=0\;\;\mbox{ for every }n\geq 2,$$
and
$$(3n)^3b_{3n-1}+b_{3n-4}=\frac{(-1)^{n+1}n^2}{(n!)^33^{3n-3}} \;\;\mbox{ for every }n\geq 2.$$
Therefore, we have that
\begin{equation}\label{eqsl7}
\varphi_2(x)=\sum^{\infty}_{n=1}\frac{(-1)^{n+1}}{3^{3n}(n!)^3}\big[1+\frac{1}{2}+\frac{1}{3}+
\cdots+\frac{1}{n}\big]x^{3n}+(\log x)\varphi_1(x).
\end{equation}
\item For $\varphi_3$ we have the following recurrence relation for the coefficients:
$$c_0=c_1=0,\;\;c_2=\frac{2^3}{3^4},$$
$$(3n-2)^3c_{3n-3}+c_{3n-6}=0\;\;\mbox{ for every }n\geq 2,\;\;\;(3n-1)^3c_{3n-2}+c_{3n-5}=0\;\;\mbox{ for every }n\geq 2,$$
and
$$(3n)^3c_{3n-1}+c_{3n-4}=\frac{(-1)^{n+1} (18n)}{3^{3n}(n!)^3}\big[(3n)\big(1+\frac{1}{2}+\frac{1}{3}+\cdots+\frac{1}{n}\big)+1\big] \;\;\mbox{ for every }n\geq 2.$$
Therefore, we have that
$$\varphi_3(x)=x\sum^{\infty}_{n=0}c_nx^n+2(\log x)\varphi_2(x)-(\log x)^2\varphi_1(x), $$
where $c_n$ given by the recurrence above.
\end{itemize}
}
\end{Example}

\begin{Definition}{\rm Equation (\ref{eqsl5}) will called \textit{Bessel equation of  order zero for third order ODEs}, due to the fact that  the solutions (\ref{eqsl6}) and (\ref{eqsl7}) have a similarity  with the functions of Bessel of order zero for second order differential equations.}
\end{Definition}

\begin{Example}[Laguerre differential equation of third order]{\rm Consider the equation
\begin{equation}\label{eqsl8}
x^2y^{\prime\prime\prime}+3xy^{\prime\prime}+(1-x)y^\prime+\alpha y=0,\;x>0
\end{equation}
where $\alpha\in\mathbb{R}$. Observe that equation (\ref{eqsl8}) has $0$ as a regular  singular point, since multiplying  by $x$ both sides  of  equation (\ref{eqsl8}) we have
$$x^3y^{\prime\prime\prime}+3x^2y^{\prime\prime}+(1-x)xy^\prime+\alpha x y=0.$$
Note that in this case, $a(x)=3$, $b(x)=1-x$ and $b(x)=\alpha x$ which are analytic at $0$. Also the indicial polynomial has the form
$$q(r)=r(r-1)(r-2)+r(r-1)a(0)+rb(0)+c(0)=r(r-1)(r-2)+3r(r-1)+r=r^3.$$
Hence as roots are $r_1=r_2=r_3=0$, according to Theorem~\ref{thmfrobb3II} we have that a solution of (\ref{eqsl8}) is of the form
$$\varphi(x)=x^{0}\big(\sum^\infty_{n=0}a_nx^n\big)$$
where $a_0\neq0$. Calculating the coefficients we obtain
 $$a_1=-\alpha a_0,a_2=\frac{(1-\alpha)(-\alpha)}{2^3}\;\;\;\;\mbox{ and }\;\;\;k^3a_k-((k-1)-\alpha)a_{k-1}=0\;\mbox{ for every }k\geq3.$$
Therefore, if we choose $a_0=1$ we have that
\begin{equation}\label{eqsl9}
\varphi(x)=1+\sum^{\infty}_{k=1}\frac{((k-1)-\alpha)\cdots(2-\alpha)(1-\alpha)(-\alpha)}{(k!)^3}x^{k}.
\end{equation}
}
\end{Example}
\begin{Definition}{\rm Equation (\ref{eqsl8}) will be called  \textit{Laguerre differential equation of third order} since if $\alpha=n-1$ where $n\in\mathbb{Z}^+$ then a solution (\ref{eqsl9}) is polynomial.}
\end{Definition}
\begin{Example}{\rm Consider the equation
\begin{equation}\label{exem2}
x^3y^{\prime\prime\prime}+x^2y^{\prime\prime}+x^2y^\prime+xy=0,\;x>0.
\end{equation}
Note that in this case, $a(x)=1$, $b(x)=x$ and $c(x)=x$ which are analytic at $0$. The indicial polynomial is given
$$q(r)=r(r-1)(r-2)+r(r-1)a(0)+rb(0)+c(0)=r(r-1)(r-2)+r(r-1)=r^3-2r^2+r.$$
The roots are $r_1=r_2=1$ and $r_3=0$. Since $r_1-r_3=1\in\mathbb{Z}^+$ according to Theorem~\ref{thmfrobb3II} there exist three solutions $\varphi_1,\varphi_2,\varphi_3$ defined, which has the form:
$$\varphi_1(x)=x\big(\sum^\infty_{n=0}a_nx^n\big),\;\;\varphi_2(x)=x^{2}\big(\sum^\infty_{n=0}b_nx^n\big)+(\log x)\varphi_1(x)$$
and
$$\varphi_3(x)=x^{2}\big(\sum^\infty_{n=0}c_nx^n\big)+c\;(\log x)\varphi_1(x), $$
where $c$ constant, $a_0\neq0$ and $c_0\neq0$.
Substituting the solutions of  equation (\ref{exem2}) we obtain the following:
\begin{itemize}
\item For $\varphi_1$ we have the following recurrence relation for the coefficients
 $$a_1=-a_0\;\;\;\;\mbox{ and }\;\;\; n^2(n+1)a_n+(n+1)a_{n-1}=0\;\mbox{ for every }n\geq2.$$
Therefore, if we choose $a_0=1$ we have that
$$\varphi_1(x)=\sum^{\infty}_{n=0}\frac{(-1)^n}{(n!)^2}x^{n+1}.$$
\item For $\varphi_2$ we have the following recurrence relation for the coefficients:
$$b_0=2,\;\;\;(n+1)^2b_n+b_{n-1}=\frac{(-1)^{n}2(n+1)}{((n+1)!)^2}\;\;\mbox{ for every }n\geq 1.$$Therefore, we have that
$$\varphi_2(x)=\sum^{\infty}_{n=0}b_nx^{n+2}+(\log x)\varphi_1(x),$$
where $b_n$ given by the recurrence above.
\item For $\varphi_3$ we have the following recurrence relation for the coefficients:
$$c_0=2c,\;\;\;(n+1)^2c_n+c_{n-1}=\frac{(-1)^{n}2c(n+1)}{((n+1)!)^2}\;\;\mbox{ for every }n\geq 1.$$
Therefore, we have that
$$\varphi_3(x)=\sum^{\infty}_{n=0}c_nx^{n+2}+(\log x)\varphi_1(x),$$
where $c_n$ given by the recurrence above.
\end{itemize}
}
\end{Example}

\begin{Example}{\rm Consider the equation
\begin{equation}\label{exem3}
x^3y^{\prime\prime\prime}+x^3y^{\prime\prime}+x^2y^\prime-xy=0,\;x>0.
\end{equation}
 Note that in this case, $a(x)=x$, $b(x)=x$ and $c(x)=-x$ which are analytic at $0$. The indicial polynomial is given
$$q(r)=r(r-1)(r-2)+r(r-1)a(0)+rb(0)+c(0)=r(r-1)(r-2).$$
The roots are $r_1=2,r_2=1$ and $r_3=0$. Since $r_1-r_2=1\in\mathbb{Z}^+$ and $r_2-r_3=1\in\mathbb{Z}^+$  according to Theorem~\ref{thmfrobb3II} there exist three solutions $\varphi_1,\varphi_2,\varphi_3$ defined, which has the form:
$$\varphi_1(x)=x^{2}\big(\sum^\infty_{n=0}a_nx^n\big),\;\;\varphi_2(x)=x\big(\sum^\infty_{n=0}b_nx^n\big)+c\;(\log x)\varphi_1(x)$$
and
$$\varphi_3(x)=x^{0}\big(\sum^\infty_{n=0}c_nx^n\big)+\tilde{c}\;(\log x)\varphi_1(x), $$
where $c$, $\tilde{c}$ are constants, $a_0\neq0$, $b_0\neq0$ and $c_0\neq0$.
Substituting the solutions of  equation (\ref{exem3}) we obtain the following:
\begin{itemize}
\item For $\varphi_1$ we have the following recurrence relation for the coefficients
$$(n+2)a_{n+1}+a_{n}=0\;\mbox{ for every }n\geq0.$$
Therefore, if we choose $a_0=1$ we have that
$$\varphi_1(x)=\sum^{\infty}_{n=0}\frac{(-1)^n}{(n+1)!}x^{n+2}.$$

\item For $\varphi_2$ we have the following recurrence relation for the coefficients:
$$c=0\;\;\;(n+1)nb_{n+1}+nb_n=0\;\;\mbox{ for every }n\geq 1,$$
we have that
$$\varphi_2(x)=\sum^{\infty}_{n=0}b_nx^{n+1},$$
where $b_n$ given by the recurrence above.
\item For $\varphi_3$ we have the following recurrence relation for the coefficients:

 $$\tilde{c}=-c_0\;\;\;nc_{n+1}+c_{n}=\frac{(-1)^{n}c_0}{n!}\;\;\mbox{ for every }n\geq 2.$$
Therefore, we have that
$$\varphi_3(x)=\sum^{\infty}_{n=0}c_nx^{n}-c_0(\log x)\varphi_2(x),$$where
where $c_n$ given by the recurrence above.
\end{itemize} }
\end{Example}

\subsection{Regular singular points at infinity}

We now proceed to investigate the solutions of an equation:
\begin{equation}\label{eq32}
L(y):=y^{\prime\prime\prime}+a_1(x)y^{\prime\prime}+a_2(x)y^\prime+a_3(x)y=0
\end{equation}
 for great values  of $|x|$. A simple way of doing this is through the change of variable $x=\frac{1}{t}$, and then analyze, in a neighborhood of  $t=0$, the solutions of the resulting  equation. Then we may apply, according to example, the previous results about analytic equations and to equations with a  regular singular point at $t=0$.
If $\varphi$ is a solution of (\ref{eq32}) for $|x|>R_0$, for some $R_0>0$, we put:

$$\tilde{\varphi}(t)=\varphi\big(\frac{1}{t}\big),\;\;\;\tilde{a}_1(t)=a_1\big(\frac{1}{t}\big),\;\;\;\tilde{a}_2(t)=a_2\big(\frac{1}{t}\big),\;\;\;\tilde{a}_3(t)=a_3\big(\frac{1}{t}\big).$$
These functions must exist for $|t|<\frac{1}{R_0}$, and
$$\frac{d\varphi}{dx}\big(\frac{1}{t}\big)=-t^2\frac{d\tilde{\varphi}}{dt}(t),$$
$$\frac{d^2\varphi}{dx^2}\big(\frac{1}{t}\big)=t^4\frac{d^2\tilde{\varphi}}{dt^2}(t)+2t^3\frac{d\tilde{\varphi}}{dt}(t),$$

$$\frac{d^3\varphi}{dx^3}\big(\frac{1}{t}\big)=-t^6\frac{d^3\tilde{\varphi}}{dt^3}(t)-6t^5\frac{d^2\tilde{\varphi}}{dt^2}(t)-6t^4\frac{d\tilde{\varphi}}{dt}(t).$$
Since, according to (\ref{eq32})

$$ \frac{d^3\varphi}{dx^3}\big(\frac{1}{t}\big)+
a_1\big(\frac{1}{t}\big)\frac{d^2\varphi}{dx^2}\big(\frac{1}{t}\big)+
a_2\big(\frac{1}{t}\big)\frac{d\varphi}{dx}\big(\frac{1}{t}\big)+
a_3\big(\frac{1}{t}\big)\varphi\big(\frac{1}{t}\big)=0,$$
we have:
$$t^6\tilde{\varphi}^{\prime\prime\prime}(t)+[6t^5-t^4\tilde{a}_1(t)]\tilde{\varphi}^{\prime\prime}(t)+[6t^4-2t^3\tilde{a}_1(t)+t^2\tilde{a}_2(t)]\tilde{\varphi}^{\prime}(t)-\tilde{a}_3(t)\tilde{\varphi}(t)=0 $$
Hence $\tilde{\varphi}$ satisfies the equation:
\begin{equation}\label{eq33}
\tilde{L}(y)=t^6y^{\prime\prime\prime}+[6t^5-t^4\tilde{a}_1(t)]y^{\prime\prime}+[6t^4-2t^3\tilde{a}_1(t)+t^2\tilde{a}_2(t)]y^{\prime}-\tilde{a}_3(t)y=0.
\end{equation}
Conversely, if $\tilde{\varphi}$ satisfies the equation $\tilde{L}(y)=0$, then a function $\varphi$ satisfies the equation $L(y)=0$. Equation (\ref{eq33}) is called {\it associate induced equation} with $L(y)=0$ and a substitution $x=1/t$.
We shall say that the \textit{the infinity is a regular singular point} of (\ref{eq32}), if the origin $t=0$ is a regular singular point of the induced equation (\ref{eq33}). Writing   equation (\ref{eq33}) as:
$$t^3y^{\prime\prime\prime}+\big[6-\frac{\tilde{a}_1(t)}{t}\big]t^2y^{\prime\prime}+\big[6-2\frac{\tilde{a}_1(t)}{t}+\frac{\tilde{a}_2(t)}{t^2}\big]ty^{\prime}-\frac{\tilde{a}_3(t)}{t^3}y=0$$
we see that $\tilde{L}(y)=0$ has the $t=0$ as a regular  singular point if and only if $\frac{\tilde{a}_1}{t},\frac{\tilde{a}_2}{t^2}$ and $\frac{\tilde{a}_3}{t^2}$ are analytic at $t=0$. This means
 that
$$\tilde{a}_1(t)=t\sum^\infty_{k=0}\alpha_kt^k,\;\tilde{a}_2(t)=t^2\sum^\infty_{k=0}\beta_kt^k,\; \tilde{a}_3(t)=t^3\sum^\infty_{k=0}\gamma_kt^k,$$
where the series converge for $|t|<\frac{1}{R_0}$, $R_0>0$. Translated into a condition that involves $a_1,a_2,a_3$, this means that
$$a_1(x)=\frac{1}{x}\sum^\infty_{k=0}\frac{\alpha_k}{x^k},\;a_2(x)=\frac{1}{x^2}\sum^\infty_{k=0}\frac{\beta_k}{x^k},\; a_3(x)=\frac{1}{x^3}\sum^\infty_{k=0}\frac{\gamma_k}{x^k},$$
where this series converges for $|x|>R_0$. Hence, the infinity is a regular singular point for equation (\ref{eq32}), if and only if (\ref{eq33}) can be written  as
$$x^3y^{\prime\prime\prime}+a(x)x^2y^{\prime\prime}+b(x)xy^{\prime}+c(x)y=0,$$
where $a,b,c$ has expansion in power series converging expressed  in powers of $1/x$, with $|x|>R_0$ for some $R_0>0$.
The simplest example  of an equation with regular singular point at the infinity is
$$x^3y^{\prime\prime\prime}+ax^2y^{\prime\prime}+bxy^\prime+cy=0,$$
where $a,b,c$ are constants, i.e., an Euler equation. Hence, this equation has the origin and the infinity as  regular  singular points, and clearly we see that are no  other singular points.
\begin{Example}{\rm Consider the equation of the Example \ref{exeminf}, we see that there exists no solution in a neighborhood of the origin. Nevertheless observe that if we change variables as $x=\frac{1}{t}$ equation (\ref{eqs4}) has the infinity as a regular  singular point since after the change we obtain the following equation:
$$ t^3y^{\prime\prime\prime}+7t^2y^{\prime\prime}+(8t-t^2)y^\prime+\frac{1}{2}y=0.$$}
\end{Example}
An example of equation that has three regular singular points (uniquely), is of the hypergeometric equation of third order (see \cite{S}, Chapter II Section 2.6):
$$(x^2-x^3)y^{\prime\prime\prime}+[\delta+\eta+1-(\alpha+\beta+\gamma+3)x]xy^{\prime\prime}+[\delta\eta-((\beta+\gamma+1)\alpha+(\beta+1)(\gamma+1))x]y^{\prime}-\alpha\beta\gamma y=0$$
where $\alpha, \beta, \gamma, \delta,\eta$ are constants. We can easily verify that $0$, $1$ and the infinity
$\infty$ are  regular singular points.
\begin{Remark}{\rm Consider the equation
\begin{equation}\label{eq34}
a(x)y^{\prime\prime}+b(x)y^{\prime\prime}+c(x)y^\prime+d(x)y=0
\end{equation}
 having the origin as non regular singular point. Then (\ref{eq34}) has the infinity as a regular  singular point if and only if
$$ \frac{b\big(\frac{1}{t}\big)}{ta\big(\frac{1}{t}\big)},\;\;\;\;\frac{c\big(\frac{1}{t}\big)}{t^2a\big(\frac{1}{t}\big)}\;\;\;\mbox{ and }\;\;\;\frac{d\big(\frac{1}{t}\big)}{t^3a\big(\frac{1}{t}\big)}$$are analytic at $t=0$. E in this case, there exist solutions of the Frobenius-Laurent type away from the origin. }
\end{Remark}

\subsection{Third order non-homogeneous equation}
A  \textit{non-homogeneous equation of third order with non-constant coefficients} is a
 differential equation of the form
\begin{equation}\label{eqsl34}
a_{0}(x)y^{\prime\prime\prime}+a_{1}(x)y^{\prime\prime}+a_2(x)y^{\prime}+a_3(x)y=f(x)
\end{equation}
where $a_0,a_1,a_2,a_3, f$ are functions defined in an interval $I$. Therefore the general solution will be of the form
$$y(x)=\varphi_h(x)+\varphi_p(x)$$where $\varphi_h$ is general solution of the homogeneous equation
\begin{equation}\label{eqsl35}
a_{0}(x)y^{\prime\prime\prime}+a_{1}(x)y^{\prime\prime}+a_2(x)y^{\prime}+a_3(x)y=0
\end{equation}
and $\varphi_p$ is particular solution of (\ref{eqsl34}). The problem is to find a solution particular of (\ref{eqsl34}) and a general solution of the homogeneous equation (\ref{eqsl35}). The technic  that we use for finding a particular solution is variation of parameters.
\subsubsection{Method of variation of parameters}\label{varmeth}
Let $\{\varphi_1,\varphi_2,\varphi_3\}$ be a basis of the solution space of the homogeneous equation (\ref{eqsl35}). We shall assume that
\begin{equation}\label{eqsl36}
\varphi_p(x)=C_1(x)\varphi_1(x)+C_2(x)\varphi_2(x)+C_3(x)\varphi_3(x)
\end{equation}
where $C_1,\;C_2,C_3$ are functions that verify
\begin{equation}\label{eqsl37}
\begin{array}{c c l}
C_1^\prime(x)\varphi_1(x)+C_2^\prime(x)\varphi_2(x)+C_3^\prime(x)\varphi_3(x) &=&0
\\
\\
C_1^\prime(x)\varphi_1^\prime(x)+C_2^\prime(x)\varphi_2^\prime(x)+C_3^\prime(x)\varphi_3^\prime(x)&=&0\\
\\
C_1^\prime(x)\varphi_1^{\prime\prime}(x)+C_2^\prime(x)\varphi_2^{\prime\prime}(x)+C_3^\prime(x)\varphi_3^{\prime\prime}(x)&=&\frac{f(x)}{a_0(x)}
\end{array}
\end{equation}
since $\{\varphi_1,\varphi_2,\varphi_3\}$ are linearly independent then the Wronskian of third order of $\varphi_1,\varphi_2,\varphi_3$ never vanishes, i.e.,
$$W(\varphi_1,\varphi_2,\varphi_3)(x)=\det\left(\begin{array}{c c c}
\varphi_1(x)&\varphi_2(x)&\varphi_3(x)\\&&
\\
\varphi_1^\prime(x)&\varphi_2^\prime(x)&\varphi_3^\prime(x)
\\&&\\
\varphi_1^{\prime\prime}(x)&\varphi_2^{\prime\prime}(x)&\varphi_3^{\prime\prime}(x)
\end{array}\right)\neq0$$
for every $x$. Therefore the system (\ref{eqsl37}) has a unique solution given by
\begin{equation}\label{eqsl38}
\begin{array}{c}
C_1^\prime(x)=\frac{\det\left(\begin{array}{c c c}
0&\varphi_2(x)&\varphi_3(x)\\&&\\
0&\varphi_2^\prime(x)&\varphi_3^\prime(x)
\\&&\\
f(x)&\varphi_2^{\prime\prime}(x)&\varphi_3^{\prime\prime}(x)
\end{array}\right)}{W(\varphi_1,\varphi_2,\varphi_3)(x)}
\\
\\
C_2^\prime(x)=\frac{\det\left(\begin{array}{c c c}
\varphi_1(x)&0&\varphi_3(x)\\&&
\\
\varphi_1^\prime(x)&0&\varphi_3^\prime(x)
\\&&\\
\varphi_1^{\prime\prime}(x)&f(x)&\varphi_3^{\prime\prime}(x)
\end{array}\right)}{W(\varphi_1,\varphi_2,\varphi_3)(x)}
\\ \\
C_3^\prime(x)=\frac{\det\left(\begin{array}{c c c}
\varphi_1(x)&\varphi_2(x)&0\\&&
\\
\varphi_1^\prime(x)&\varphi_2^\prime(x)&0
\\&&\\
\varphi_1^{\prime\prime}(x)&\varphi_2^{\prime\prime}(x)&f(x)
\end{array}\right)}{W(\varphi_1,\varphi_2,\varphi_3)(x)}.
\end{array}
\end{equation}
We shall now construct as other linearly independent solutions from a solution of the
homogeneous equation (\ref{eqsl35}).
\subsubsection{Reduction of order}
Let us consider a homogeneous third order equation with non-constant coefficients (\ref{eqsl35}).
 We shall apply the method of \textit{reduction of order} that consists in finding another solution of the differential equation from an already known solution as we shall see. Suppose that we know a solution $\varphi$ of  equation (\ref{eqsl34}). Let us consider $\psi=\mu \varphi$ where $\mu$ is a
function. We shall assume that $\psi$ is a solution of (\ref{eqsl34}). The substitution
leads to the  following second order equation for $u^\prime$:
$$a_0(x)\varphi(x)u^{\prime\prime\prime}+(3a_0(x)\varphi^\prime(x)+a_1(x)\varphi(x))u^{\prime\prime}+ (3a_0(x)\varphi^{\prime\prime}(x)+2a_1(x)\varphi^\prime(x)+a_2(x)\varphi(x))u^\prime=0.$$
From two linearly independent solutions we can construct a third solution. The proof of this construction can be found in \cite{MLT}. We shall state this technique  in what follows:

\subsubsection{Construction of a third solution from two linearly independent solutions}\label{wrosnk}
Let $y_1,y_2$ be linearly independent solutions of (\ref{eqsl35}). Denote by
$$W_{ij}=-W_{ji}=y_iy_j^\prime-y_jy_i^\prime,\;i\neq j.$$ The idea is to verify that $y_1,y_2$ are solutions of the following equation
$$ W_{12}(x)z^{\prime\prime}-W^\prime_{12}(x)z^\prime+(y_1^\prime(x)y^{\prime\prime}_2(x)-y^\prime_2(x)y^{\prime\prime}_1(x))z=0$$
and then consider a non-homogeneous equation
\begin{equation}\label{eqsl39}
W_{12}(x)z^{\prime\prime}-W^\prime_{12}(x)z^\prime+(y_1^\prime(x)y^{\prime\prime}_2(x)-y^\prime_2(x)y^{\prime\prime}_1(x))z=W(x)
\end{equation}
where $$W(s)=\exp\big(-\dint^s_{x_0}\frac{a_1(t)}{a_0(t)}dt\big)$$com $x_0\in I$.

The particular solution of this equation is obtained by the method of variation of parameters (see \cite{C}, Chapter II Section 2.6) and this given by
$$y_3(x)=y_2(x)\dint^x_{x_0}\frac{y_1(s)W(s)}{(W_{12}(s))^2}ds-y_1(x)\dint^x_{x_0}\frac{y_2(s)W(s)}{(W_{12}(s))^2}ds.$$

It is not difficult to see that $y_3$ is a solution of the homogeneous equation (\ref{eqsl35}) and $W(y_1,y_2,y_3)(x)\neq0$ for every $x\in I$.

Also we have:
\begin{equation}\label{eqsl40}
\begin{array}{c c l}
y_1W_{23}+y_2W_{31}+y_3W_{12} &=&0
\\
\\
y_1^\prime W_{23}+y_2^\prime W_{31}+y_3^\prime W_{12}&=&0\\
\\
y_1^{\prime\prime}W_{23}+y_2^{\prime\prime}W_{31}+y_3^{\prime\prime}W_{12}&=&W.
\end{array}
\end{equation}
Now for finding a particular solution of (\ref{eqsl34}) we use the method of variation of parameters used in section \ref{varmeth}. Hence we consider
$$y_p(x)=C_1(x)y_1(x)+C_2(x)y_2(x)+C_3(x)y_3(x)$$
where $C_1,\;C_2,C_3$ are functions that verify
\begin{equation}\label{eqsl41}
\begin{array}{c c l}
C_1^\prime(x)y_1(x)+C_2^\prime(x)y_2(x)+C_3^\prime(x)y_3(x) &=&0
\\
\\
C_1^\prime(x)y_1^\prime(x)+C_2^\prime(x)y_2^\prime(x)+C_3^\prime(x)y_3^\prime(x)&=&0\\
\\
C_1^\prime(x)y_1^{\prime\prime}(x)+C_2^\prime(x)y_2^{\prime\prime}(x)+C_3^\prime(x)y_3^{\prime\prime}(x)&=&\frac{f(x)}{a_0(x)}.
\end{array}
\end{equation}
Observe that according to (\ref{eqsl40}) we have that
$$C_1^\prime(x)=\frac{f(x)W_{23}(x)}{a_0(x)W(x)},\;\;\;C_2^\prime(x)=\frac{f(x)W_{32}(x)}{a_0(x)W(x)}\;\;\;\;\mbox{ and }\;\;\;C_3^\prime(x)=\frac{f(x)W_{12}(x)}{a_0(x)W(x)} $$
are solutions of (\ref{eqsl41}).
Therefore a particular solution of (\ref{eqsl34}) is given by
$$y_p(x)=y_1(x)\dint^{x}_{x_0}\frac{W_{23}(s)f(s)}{W(s)a_0(s)}ds+y_2(x)\dint^{x}_{x_0}\frac{W_{31}(s)f(s)}{W(s)a_0(s)}ds+y_3(x)\dint^{x}_{x_0}\frac{W_{12}(s)f(s)}{W(s)a_0(s)}ds.$$

On the other hand, according to  Theorem~\ref{thmfrobb3I} and \ref{thmfrobb3II} we have that in the case of
third order differential equations with a   regular singular point at the origin it is always possible to find the linearly independent solutions of the homogeneous equation and making use of the method introduced in this section we find a general solution of the non-homogeneous equation since segue:
\begin{Corollary}{\rm Every solution of
$$L(y):=x^3y^{\prime\prime\prime}+x^2a(x)y^{\prime\prime}+xb(x)y^\prime+c(x)y=f(x),$$
with  $a(x),b(x), c(x)$ and $f(x)$ analytic for $|x|<R$, $R>0$, is of the form
$$\begin{array}{l}y(x)=c_1y_1(x)+c_2y_2(x)+c_3y_3(x)+y_1(x)\dint^{x}_{x_0}\frac{(y_2(s)y_3^\prime(s)-y_3(s)y_2^\prime(s))f(s)}{s^3\exp\big(-\dint^{s}_{x_0}\frac{a(t)}{t}dt\big)}ds\\
\\\;\;\;+y_2(x)\dint^{x}_{x_0}\frac{(y_3(s)y_1^\prime(s)-y_1(s)y_3^\prime(s))f(s)}{s^3\exp\big(-\dint^{s}_{x_0}\frac{a(t)}{t}dt\big)}ds+y_3(x)\dint^{x}_{x_0}\frac{(y_1(s)y_2^\prime(s)-y_2(s)y_1^\prime(s))f(s)}{s^3\exp\big(-\dint^{s}_{x_0}\frac{a(t)}{t}dt\big)}ds\end{array},$$
where $y_1,y_2,y_3$ are linearly independent solutions of $L(y)=0$ and $c_1,c_2,c_3$ constants.}
\end{Corollary}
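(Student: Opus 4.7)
The plan is to reduce the statement to a direct application of the variation of parameters method developed in \S\ref{varmeth}, combined with the existence of three linearly independent solutions provided by Theorems~\ref{thmfrobb3I} and \ref{thmfrobb3II}. First, I would invoke these Frobenius-type theorems to produce three linearly independent solutions $y_1(x), y_2(x), y_3(x)$ of the associated homogeneous equation $L(y)=0$ on the punctured neighborhood $0<|x|<R$. Standard ODE theory then guarantees that the homogeneous solution space has dimension three away from the singularity, so that the general solution of the homogeneous equation is $y_h(x)=c_1 y_1(x)+c_2 y_2(x)+c_3 y_3(x)$, and the general solution of the non-homogeneous equation is $y=y_h+y_p$ for any particular solution $y_p$.

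Next I would construct $y_p$ via the variation of parameters scheme already set up in \S\ref{varmeth}: seek $y_p=C_1 y_1+C_2 y_2+C_3 y_3$ with $C_1',C_2',C_3'$ satisfying the $3\times 3$ linear system \eqref{eqsl37}, where the right-hand side of the third equation is $f(x)/a_0(x)=f(x)/x^3$. Since $y_1,y_2,y_3$ are linearly independent, the Wronskian $W(y_1,y_2,y_3)(x)$ is nonvanishing on $0<|x|<R$, so Cramer's rule yields
\[
C_1'(x)=\frac{W_{23}(x)\,f(x)}{x^3 W(y_1,y_2,y_3)(x)},\quad C_2'(x)=\frac{W_{31}(x)\,f(x)}{x^3 W(y_1,y_2,y_3)(x)},\quad C_3'(x)=\frac{W_{12}(x)\,f(x)}{x^3 W(y_1,y_2,y_3)(x)},
\]
where $W_{ij}=y_i y_j'-y_j y_i'$, by virtue of identities \eqref{eqsl40}.

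The remaining step is to identify $W(y_1,y_2,y_3)(x)$ in closed form. Rewriting the ODE in normalized form as
\[
y'''+\frac{a(x)}{x}\,y''+\frac{b(x)}{x^2}\,y'+\frac{c(x)}{x^3}\,y=0,
\]
Abel--Liouville's formula for third order linear ODEs gives $W(y_1,y_2,y_3)'=-\frac{a(x)}{x}W(y_1,y_2,y_3)$, hence
\[
W(y_1,y_2,y_3)(x)=K\exp\Bigl(-\int^{x}_{x_0}\tfrac{a(t)}{t}\,dt\Bigr)
\]
for some nonzero constant $K$, which can be absorbed into the constants $c_1,c_2,c_3$. Substituting this into the Cramer expressions and integrating produces precisely the three integral terms displayed in the statement, so that $y=c_1y_1+c_2y_2+c_3y_3+y_p$ gives the claimed formula.

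The only genuine difficulty is bookkeeping: verifying that Abel's formula is applicable at the regular singular point (it is, on $0<|x|<R$ where $a(x)/x$ is integrable in the sense of an indefinite integral once one fixes $x_0\ne 0$), and that absorbing the constant $K$ does not alter the form of the answer. Otherwise the argument is mechanical, being the standard variation of parameters once the existence of a basis of homogeneous solutions is secured by the earlier Frobenius theorems.
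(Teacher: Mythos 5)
Your argument is essentially the paper's: both reduce the corollary to the variation-of-parameters scheme of Section~\ref{varmeth}, solve the linear system for $C_1',C_2',C_3'$, and then identify the denominator with $\exp\big(-\int_{x_0}^{s}\frac{a(t)}{t}dt\big)$. The only real difference is how that identification is made. You invoke Abel--Liouville to get $W(y_1,y_2,y_3)(x)=K\exp\big(-\int_{x_0}^{x}\frac{a(t)}{t}dt\big)$ for some constant $K\neq 0$, whereas the paper never writes Cramer's rule with the genuine Wronskian at all: it constructs $y_3$ from $y_1,y_2$ by the procedure of Section~\ref{wrosnk}, which forces the identities (\ref{eqsl40}) to hold with $W(s)=\exp\big(-\int_{x_0}^{s}\frac{a_1(t)}{a_0(t)}dt\big)$ on the right-hand side, i.e. it builds a basis whose Wronskian is exactly that exponential (so $K=1$ by construction), and then reads off $C_i'$ directly from (\ref{eqsl40}).

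There is one genuine slip in your write-up: the constant $K$ cannot be ``absorbed into $c_1,c_2,c_3$.'' The factor $1/K$ multiplies the three integral terms (the particular solution), not the homogeneous combination, and a particular solution scaled by $1/K$ solves $L(y)=f/K$, not $L(y)=f$. So for an arbitrary linearly independent triple $y_1,y_2,y_3$ the displayed formula is off by the factor $K=W(y_1,y_2,y_3)(x_0)$. The fix is either to keep $W(y_1,y_2,y_3)(s)$ itself in the denominators, or to normalize the basis so that $W(y_1,y_2,y_3)(x_0)=1$ --- which is precisely what the paper's construction of $y_3$ in Section~\ref{wrosnk} accomplishes, and is the reading under which the corollary as stated is correct. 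With that normalization made explicit, your proof is complete and otherwise matches the paper's.
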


\subsection{Convergence of formal solutions for third order ODEs}

Now we are in conditions to prove Theorem~\ref{Theorem:1formalregularorderthree}:

\begin{proof}[Proof of Theorem~\ref{Theorem:1formalregularorderthree}]
This is proved like Theorem~\ref{Theorem:B} once we have the description of the solutions given for order three by Theorems~\ref{thmfrobb3I} and ~\ref{thmfrobb3II}. 
\end{proof}
Theorem~\ref{Theorem:1formalregularorderthree} above cannot be improved by simply removing the hypothesis of regularity on the singular point,
as shows the following example:

\begin{Example}\label{exeminf} {\rm Consider the equation
\begin{equation}\label{eqs4}
x^3y^{\prime\prime\prime}-x^2y^{\prime\prime}-y^\prime-\frac{1}{2}y=0.
\end{equation}
The origin $x_0=0$ is a singular point, but not is regular singular point, since the coefficient -1 of $y^\prime$ does not have the form $xb(x)$, where $b$ is analytic for $0$. Nevertheless, we can formally solve this equation by power series
\begin{equation}\label{eqs5}
\sum^{\infty}_{k=0} a_kx^k,
\end{equation}
where the coefficients $a_k$ satisfy the following recurrence formula
\begin{equation}\label{eqs6}
(k+1)a_{k+1}=\big[k^3-4k^2+3k-\frac{1}{2}\big]a_k,\;\;\;\mbox{ for every }k=0,1,2,\ldots.
\end{equation}
If $a_0\neq0$, applying the quotient test to expressions (\ref{eqs5}) and (\ref{eqs6}), we have that
$$ \big|\frac{a_{k+1}x^{k+1}}{a_kx^k}\big|=\big|\frac{k^3-4k^2+3k-\frac{1}{2}}{k+1}\big|\cdot|x|\to\infty,$$when $k\to\infty$, provided that $|x|\neq0$. Hence, the series converges  only for $x=0$.}
\end{Example}

\subsection{Further questions}

Other convergence aspects of solutions of third order ODEs as well as the characterization of those
admitting regular singular points, in terms of the space of solutions, will be studied
in a forthcoming work (\cite{Leon-Scardua}).
We shall also discuss some more general notions of regularity for a singular point, under which we still
have the existence of solutions. An interesting question is the classification of the third order ODEs which admit a Liouvillian solution or a Liouvillian first integral. Another intriguing problem is  the
search of a first or second  order  model for a third order ODE. This would probably lead to the introduction of a type of holonomy group for such ODEs. Finally, it seems reasonable to imagine that more general versions of Frobenius methods are valid for ODEs having coefficients in a function field $\mathbb K(x)$ where $\mathbb K$ is an algebraically closed ordered field, of characteristic $k\geq 0$. This is also treated in the continuation of our work.


\bibliographystyle{amsalpha}

\end{document}